\newtheorem{thm}{Theorem}[section]
\newtheorem{lem}[thm]{Lemma}
\newtheorem{prop}[thm]{Proposition}
\newtheorem{rem}{Remark}[section]
\def\QEDopen{{\setlength{\fboxsep}{0pt}\setlength{\fboxrule}{0.2pt}\fbox{\rule[0pt]{0pt}{1.3ex}\rule[0pt]{1.3ex}{0pt}}}} %
\def\QED{\QEDopen} %
\def\endproof{\hspace*{\fill}~\QED\par\endtrivlist\unskip}%
\def\bma#1\ema{{\allowdisplaybreaks\begin{split}#1\end{split}}}
\numberwithin{equation}{section}
\begin{document}
	\title{Global well-posedness and large-time behavior of classical solutions to the Euler-Navier-Stokes system in $\mathbb{R}^3$} 
	\author[a,b]{ Feimin Huang   \thanks{E-mail: fhuang@amt.ac.cn(F.-M. Huang)}}
	\author[a]{ Houzhi Tang   \thanks{E-mail: houzhitang@amss.ac.cn(H.-Z. Tang)}}
	\author[c]{ Guochun Wu   \thanks{E-mail: guochunwu@126.com(G.-C. Wu)}}
	\author[d]{Weiyuan Zou \thanks{E-mail: zwy@amss.ac.cn(W.-Y. Zou)}}
\affil[a]{Academy of Mathematics and Systems Science, Chinese Academy of Sciences, Beijing 100190, P. R. China}
	\affil[b]{School  of  Mathematical  Sciences,  University  of  Chinese  Academy  of  Sciences,  Beijing 100049, P. R. China }
	\affil[c]{Fujian Province University Key Laboratory of Computational Science, School of Mathematical Sciences, Huaqiao University, Quanzhou 362021, P. R. China} 
	\affil[d]{College of Mathematics and Physics,
		Beijing University of Chemical Technology, Beijing 100029, P. R. China} 
	\date{}
	\renewcommand*{\Affilfont}{\small\it}	
	\maketitle
	
	
	\begin{abstract}
		In this paper, we study the Cauchy problem of a two-phase flow system consisting of the compressible isothermal Euler equations and the incompressible Navier-Stokes equations coupled through the drag force, which can be formally derived from the Vlasov-Fokker-Planck/incompressible Navier-Stokes equations. When the initial data is a small perturbation around an equilibrium state, we prove the global well-posedness of the classical solutions to this system and show the solutions tends to the equilibrium state as time goes to infinity. In order to resolve the main difficulty arising from the pressure term of the incompressible Navier-Stokes equations, we properly use the Hodge decomposition, spectral analysis, and energy method to obtain the $L^2$ time decay rates of the solution when the initial perturbation belongs to $L^1$ space. Furthermore, we show that the above time decay rates are optimal.
	\end{abstract}
	\noindent{\textbf{Key words:}
		Euler-Navier-Stokes system, large-time behavior, spectral analysis, optimal time decay rates.}\\
		\textbf{2020 MR Subject Classification:}\ 35B40, 35B65, 76N10.\\
	\section{Introduction}
	\hspace{2em} In this paper we are concerned with the global well-posedness and large-time behavior of a coupled hydrodynamic system in three-dimensional space as follows,
	\begin{equation}\label{Main1}
		\left\{\begin{array}{llll}
			\displaystyle \partial_t\rho+\textrm{div}(\rho u)=0,\\
			\displaystyle \partial_t(\rho u)+\mathrm{div}\big(\rho u\otimes u\big)
			+\nabla\rho=-\rho(u-v),\\
			\displaystyle \partial_tv+v\cdot\nabla v+\nabla P=\Delta v+\rho(u-v),\\
			\displaystyle \text{div}v=0,
		\end{array}\right.
	\end{equation}
where $\rho=\rho(t,x)$ and $u=u(t,x)$ are the density and velocity for the compressible Euler fluid flow, $v=v(t,x)$ is the velocity for the incompressible Navier-Stokes fluid flow, respectively.

 We supply \eqref{Main1} with the initial data
\begin{equation}\label{ID1}
(\rho,u,v)|_{t=0}=(\rho_{0},u_{0},v_{0}),
\end{equation}
and the far-field states 
\begin{equation}\label{ID2}
\lim_{|x|\rightarrow +\infty}(\rho,u,v)=(\rho_*,0,0),
\end{equation}
where $\rho_*>0$ is the given positive constant.

	This coupled Euler-Navier-Stokes (E-NS) system can be formally derived from the Vlasov-Fokker-Planck/incompressible Navier-Stokes equations, which describe the behavior of a large cloud of particles interacting with the incompressible fluid in the following form:
	\begin{equation}\label{NSVFP}
		\left\{\begin{array}{llll}
			\displaystyle \partial_tf+\xi\cdot\nabla_xf+\text{div}_\xi((v-\xi)f)=-\alpha\text{div}_\xi((u_f-\xi)f)+\sigma\Delta_\xi f,\\
			\displaystyle  \partial_tv+v\cdot\nabla_x v+\nabla_x P=\Delta_x v+\int_{\mathbb{R}^3}(\xi-v)fd\xi,\\
			\displaystyle \text{div}_xv=0,
		\end{array}\right.
	\end{equation}
	for $(t,x,\xi)\in \mathbb{R}_+\times \mathbb{R}^3\times \mathbb{R}^3$, where  $f=f(t,x,\xi)$ denotes the distribution of particles, $v=v(t,x)$ is the velocity of incompressible fluid, and $u_f$ represents the averaged local velocity defined by 
	\begin{equation}\label{uf}
		u_f=u_f(t,x)=\frac{\int_{\mathbb{R}^3}\xi f(t,x,\xi)d\xi}{\int_{\mathbb{R}^3}f(t,x,\xi)d\xi}.
	\end{equation}
	
	Recently, this type of coupled kinetic-fluid model has received  a bulk of attention due to its wide range of applications in the modeling of reaction flows of sprays, atmospheric pollution modeling,  chemical engineering or waste water
	treatment, dust collecting units \cite{MR2226800,MR2041452,1981Collective,2006Large,Williams1958Spray,MR709743}. There have been many important mathematical works. In particular,  Carrillo, Choi, and Karper \cite{MR3465376} studied global existence, hydrodynamic limit, and large-time behavior of weak solutions to the system \eqref{NSVFP} via energy method and relative entropy techniques. For other interesting works, we refer to \cite{MR2106334,MR2106333,MR2729436,MR3073216,MR3227296,MR3403400,MR4076066,MR4420295}.
		
	Now we will carry out a formal derivation of the main system \eqref{Main1} by asymptotic analysis. We take into account a regime with $\alpha=\sigma=\varepsilon^{-1}$. Let $(f^\varepsilon,v^\varepsilon, P^\varepsilon)$ be the corresponding solution, that is 
	\begin{equation}\label{NSVFP1}
		\left\{\begin{array}{llll}
			\displaystyle \partial_tf^\varepsilon+\xi\cdot\nabla_xf^\varepsilon+\text{div}_\xi((v-\xi)f^\varepsilon)=-\frac{1}{\varepsilon}\text{div}_\xi((u_{f^\varepsilon}-\xi)f^\varepsilon)+\frac{1}{\varepsilon}\Delta_\xi {f^\varepsilon},\\
			\displaystyle  \partial_tv^\varepsilon+v^\varepsilon\cdot\nabla_xv^\varepsilon+\nabla_x P^\varepsilon=\Delta_x v^\varepsilon+\int_{\mathbb{R}^3}(\xi-v^\varepsilon)f^\varepsilon d\xi,\\
			\displaystyle \text{div}_xv^\varepsilon=0.
		\end{array}\right.
	\end{equation}
	In terms of $\eqref{NSVFP1}_1$, we formally have that
	$$
	-\text{div}_\xi((u_{f^\varepsilon}-\xi)f^\varepsilon)+\Delta_\xi f^\varepsilon \rightarrow 0, \quad \text{as}\quad \varepsilon \rightarrow 0.
	$$
	Thus, the particle distribution function $f^\varepsilon(t,x,\xi)$ converges to 
	\begin{equation}\label{ft}
		f(t,x,\xi)=\frac{\rho_f(t,x)}{(2\pi)^{3/2}}e^{-\frac{|u_f-\xi|^2}{2}},\quad \text{and}\quad \rho_{f}(t,x)=\int_{\mathbb{R}^3}f(t,x,\xi)d\xi.
	\end{equation}
	Integrating $\eqref{NSVFP1}_1$ with respect to $\xi$ over $\mathbb{R}^3$ and assuming the limits $f^\varepsilon  \rightarrow f$ and $u_{f^\varepsilon} \rightarrow u_f$ hold~$\text{as}$ ~$\varepsilon \rightarrow 0$, we can obtain the continuity equation
	\begin{equation}\label{mass}
		\partial_t\rho_f+\text{div}_x(\rho_fu_f)=0.
	\end{equation}
	Multiplying $\eqref{NSVFP1}_2$ by $\xi$ and integrating the equation with respect to $\xi$ in $\mathbb{R}^3$ yield 
	\begin{equation}\label{TR1}
		\begin{aligned}
			&\frac{d}{dt}\int_{\mathbb{R}^3} \xi f^\varepsilon(t,x,\xi)d\xi\\
			&=\int_{\mathbb{R}^3}\xi\Big(-\xi\cdot\nabla_xf^\varepsilon-\text{div}_\xi((v^\varepsilon-\xi)f^\varepsilon)\Big)d\xi
			+\int_{\mathbb{R}^3}\xi \Big(-\frac{1}{\varepsilon}\text{div}_\xi((u_{f^\varepsilon}-\xi)f^\varepsilon)+\frac{1}{\varepsilon}\Delta_\xi f^\varepsilon\Big)d\xi\\
			&=-\text{div}_x\Big(\int_{\mathbb{R}^3}\xi\otimes \xi f^\varepsilon d\xi\Big)+\int_{\mathbb{R}^3}(v^\varepsilon-\xi)f^\varepsilon d\xi\\
			&\triangleq I_1^\varepsilon+I_2^\varepsilon,	  		
		\end{aligned}
	\end{equation}
	where we use the fact that 
	$$
	\int_{\mathbb{R}^3}(u_{f^\varepsilon}-\xi)f^\varepsilon d\xi=0.
	$$
	The first term  on the right-hand side of \eqref{TR1} is stated as 
	\begin{equation}
		\begin{aligned}
			I_1^\varepsilon&=-\text{div}_x\Big(\int_{\mathbb{R}^3}(\xi-u_{f^\varepsilon})\otimes (\xi-u_{f^\varepsilon})f^\varepsilon d\xi+2\int_{\mathbb{R}^3}\xi\otimes u_{f^\varepsilon}f^\varepsilon d\xi-\int_{\mathbb{R}^3}u_{f^\varepsilon}\otimes u_{f^\varepsilon}f^\varepsilon d\xi \Big).
		\end{aligned}
	\end{equation}
	By \eqref{uf}, the second term $I_2^\varepsilon$ is equivalent to 
	$$
	I_2^\varepsilon=-\rho_{f^\varepsilon}(u_{f^\varepsilon}-v^\varepsilon).
	$$
A direct computation implies that 	
	\begin{equation}\label{f2}
		\int_{\mathbb{R}^3}(\xi-u_{f^\varepsilon})\otimes (\xi-u_{f^\varepsilon})
		\frac{1}{(2\pi)^{3/2}}e^{-\frac{|u_f-\xi|^2}{2}}d\xi={\rm{I}},\quad 
		\int_{\mathbb{R}^3}\frac{1}{(2\pi)^{3/2}}e^{-\frac{|u_f-\xi|^2}{2}}d\xi=1,
	\end{equation}
	where ${\rm{I}}$ denotes the identity matrix. In virtue of \eqref{uf}, \eqref{f2}, and assuming $v^\varepsilon \rightarrow v, P^\varepsilon\rightarrow P$ as $\varepsilon\rightarrow 0$, we obtain 
	\begin{equation}
		\lim_{\varepsilon \rightarrow 0} \frac{d}{dt}\int_{\mathbb{R}^3}\xi f^\varepsilon d\xi=\frac{d}{dt} \int_{\mathbb{R}^3}\xi fd\xi=\frac{d}{dt}(\rho_fu_f),
	\end{equation}
	
	\begin{equation}
		\begin{aligned}
			I_1
			&=\lim_{\varepsilon \rightarrow 0} I_1^\varepsilon=-\text{div}_x\Big(
			\int_{\mathbb{R}^3}(\xi-u_f)\otimes (\xi-u_f)\frac{\rho_f}{(2\pi)^{3/2}}e^{-\frac{|u_f-\xi|^2}{2}}d\xi\\
			&\quad+2\int_{\mathbb{R}^3}\xi\otimes u_f f d\xi-\int_{\mathbb{R}^3}u_f\otimes u_f\frac{\rho_f}{(2\pi)^{3/2}}e^{-\frac{|u_f-\xi|^2}{2}}d\xi\Big) \\
			&=-\text{div}_x(\rho_f{\rm{I}}+2\rho_fu_f\otimes u_f-\rho_fu_f\otimes u_f)\\
			&=-\nabla_x\rho_f-\text{div}_x(\rho_fu_f\otimes u_f),
		\end{aligned}
	\end{equation}
	and 
	\begin{equation}\label{I2}
		I_2=\lim_{\varepsilon \rightarrow 0}I_2^\varepsilon=-\rho_f(u_f-v).
	\end{equation}
	Thus, the momentum equation can be derived as 
	\begin{equation}\label{mm}
		\partial_t(\rho_fu_f)+\text{div}_x(\rho_fu_f\otimes u_f)+\nabla_x \rho_{f}=-\rho_f(u_f-v).
	\end{equation}
	From $\eqref{NSVFP1}_{2,3}$, \eqref{I2}, \eqref{mass}, and \eqref{mm}, we have  
	\begin{equation}
		\left\{\begin{array}{llll}
			\displaystyle \partial_t\rho_f+\text{div}_x(\rho_f u_f)=0,\\
			\displaystyle  \partial_t(\rho_fu_f)+\text{div}_x(\rho_fu_f\otimes u_f)+\nabla \rho_{f}=-\rho_f(u_f-v),\\
			\displaystyle \partial_tv+v\cdot\nabla_x v+\nabla_x P=\Delta_x v+\rho_f(u_f-v),\\
			\displaystyle \text{div}v=0,
		\end{array}\right.
	\end{equation}
	which is \eqref{Main1} by setting $\rho=\rho_f,~u=u_f,~\nabla=\nabla_x,~\text{div}=\text{div}_x,~\Delta=\Delta_x$.
	
	When the pressure term $\nabla\rho$ in $\eqref{Main1}_2$ vanishes, the coupled system is reduced to the pressureless Euler type system. Choi and Jung \cite{MR4316126} firstly applied the weighted energy method to investigate the global well-posedness and proved the solutions tend to the equilibrium state at the almost optimal decay rates. Later, the optimal decay rates were achieved in \cite{huang2023,Choi23,Choi20232}. However, to the best of our knowledge, there is no any work on the global well-posedness of the system \eqref{Main1}-\eqref{ID2} in $\mathbb{R}^3$. In this paper, we focus on the system \eqref{Main1}-\eqref{ID2}.
	
	
	To overcome the difficulty for the lack of the dissipation of velocity $u(t,x)$, we define a new variable $a=\ln \rho$.  Then the system \eqref{Main1}-\eqref{ID2} can be reformulated as follows:
	\begin{equation}\label{Main2}
		\left\{
		\begin{aligned}
			&\partial_ta+u\cdot\nabla a +\text{div}u=0,\\
			&\partial_tu+u\cdot\nabla u+\nabla a+u-v=0,\\
			&\partial_tv+v\cdot\nabla v+\nabla P=\Delta v+e^{a}(u-v),\\
			&\text{div} v=0,
		\end{aligned}
		\right.
	\end{equation}
	with the initial data
	\begin{equation}
		(a,u,v)|_{t=0}=(a_0,u_0,v_0),~a_0\triangleq\ln\rho_0,
	\end{equation}
	and far-field states
	\begin{equation}\label{in-data}
		(a,u,v)\rightarrow (a_*,0,0),~a_*\triangleq \ln\rho_* \quad \text{as}\quad |x|\rightarrow+\infty.
	\end{equation}

The first theorem on the global well-posedness of classical solutions to the Cauchy problem \eqref{Main2}-\eqref{in-data} is given as:
	\begin{thm}\label{thm1}
	Assume that for some integer $s\geq 3$, the initial data $(a_0-a_*,u_0,v_0)\in H^s(\mathbb{R}^3)$  satisfies
	\begin{equation}\label{in-data-e0}
		\|(a_0-a_*,u_{0},v_0)\|_{H^s(\mathbb{R}^3)}\leq \varepsilon_0,
	\end{equation}
	where  $\varepsilon_0$ is a small positive constant and ${\rm{div}}v_0=0$, then the Cauchy problem \eqref{Main2}-\eqref{in-data} admits a unique global classical solution $(a,u,v)$ such that 
	\begin{equation}\label{main-est}
		\|(a-a_*,u,v)(t)\|_{H^s(\mathbb{R}^3)}^2+\int_0^t\big( \|\nabla (a,u)(\tau)\|_{H^{s-1}(\mathbb{R}^3)}^2+\|\nabla v(\tau)\|_{H^{s}(\mathbb{R}^3)}^2\big)d\tau\leq C\varepsilon_0^2,
	\end{equation} 
	for any $t\in\mathbb{R}_+$.
	Additionally, when $(a_0-a_*,u_0,v_0)\in L^1(\mathbb{R}^3)$, then it holds that
	\begin{equation}\label{Upper}
		\|\nabla^j(a-a_*,u,v)(t)\|_{L^2(\mathbb{R}^3)}\leq C(1+t)^{-\frac{3}{4}-\frac{j}{2}}, \quad 0\leq j\leq s, 
	\end{equation}
	where the constant $C>0$ only depends on the initial data.
\end{thm}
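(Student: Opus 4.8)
The plan is to split the statement into a global existence part and a decay part that interact only through the uniform bound \eqref{main-est}. Local-in-time existence and a continuation criterion follow from a standard iteration scheme: after the substitution $a=\ln\rho$ the Euler block in \eqref{Main2} is symmetric hyperbolic, the $v$-equation is parabolic, and the drag coupling is of lower order, so the construction reduces entirely to closing a priori estimates. Thus the first task is to establish \eqref{main-est} uniformly in $t$ under \eqref{in-data-e0}, and the second is to upgrade this, once the $L^1$ hypothesis is added, to the optimal rate \eqref{Upper}.

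\noindent\textbf{Step 1: uniform energy estimates.} I would apply $\nabla^k$ for $0\le k\le s$ to the three evolution equations in \eqref{Main2} and take $L^2$ inner products with $\nabla^k a$, $\nabla^k u$ and a density-weighted multiple of $\nabla^k v$, the last chosen so that the two drag terms $-(u-v)$ and $e^{a}(u-v)$ add up to the nonnegative quantity $\sim\|\nabla^k(u-v)\|_{L^2}^2$; the skew coupling $\langle\nabla^{k+1}a,\nabla^k u\rangle+\langle\nabla^k a,\nabla^k\mathrm{div}\,u\rangle$ cancels, and the viscous term contributes $\|\nabla^{k+1}v\|_{L^2}^2$. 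Because the Euler block carries no dissipation in $\nabla a$, I would then add the interaction functionals $\sum_{k=0}^{s-1}\int\nabla^k u\cdot\nabla^{k+1}a\,dx$; differentiating in time and using the continuity and momentum equations gives $-\|\nabla^{k+1}a\|_{L^2}^2+\|\nabla^k\mathrm{div}\,u\|_{L^2}^2+(\text{controlled terms})$, so that a small multiple of these functionals added to the energy supplies the missing $\|\nabla a\|_{H^{s-1}}^2$, while $\|\nabla^k\mathrm{div}\,u\|^2$ is absorbed via $\|\nabla u\|_{H^{s-1}}\lesssim\|\nabla(u-v)\|_{H^{s-1}}+\|\nabla v\|_{H^{s-1}}$. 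All nonlinearities ($u\cdot\nabla a$, $u\cdot\nabla u$, $v\cdot\nabla v$, $(e^a-e^{a_*})(u-v)$) are at least cubic in the estimates and are controlled by Moser-type inequalities and the smallness of $\|(a-a_*,u,v)\|_{H^s}$; the logarithmic change of variable is precisely what makes the Euler block symmetrizable so that these manipulations are legitimate. This yields a Lyapunov inequality $\frac{d}{dt}\mathcal{E}(t)+c\mathcal{D}(t)\le 0$ with $\mathcal{E}(t)\sim\|(a-a_*,u,v)(t)\|_{H^s}^2$ and $\mathcal{D}(t)\sim\|\nabla(a,u)(t)\|_{H^{s-1}}^2+\|\nabla v(t)\|_{H^s}^2$, hence \eqref{main-est} by the usual continuity argument, and together with local existence the global classical solution.

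\noindent\textbf{Step 2: linearized decay via Hodge decomposition and spectral analysis.} To analyze the linear semigroup $e^{t\mathcal{A}}$ of \eqref{Main2} around $(a_*,0,0)$ I would Fourier transform in $x$, Helmholtz-decompose $u=\mathcal{P}u+\mathcal{Q}u$, and use $\mathrm{div}\,v=0$; applying the Leray projection eliminates $\nabla P$ and decouples the transformed system into two $2\times2$ blocks, a damped compressible Euler block for $(\hat a,|\xi|^{-1}\widehat{\mathrm{div}\,u})$ with eigenvalues $\frac12\bigl(-1\pm\sqrt{1-4|\xi|^2}\bigr)$, and a solenoidal block for $(\widehat{\mathcal{P}u},\hat v)$ with matrix $\begin{pmatrix}-1&1\\ \rho_*&-|\xi|^2-\rho_*\end{pmatrix}$. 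A direct eigenvalue expansion shows that for small $|\xi|$ each block has exactly one mode of order $-c|\xi|^2$, whose eigenprojection is nondegenerate in all of $\hat a,\hat u,\hat v$, and the remaining modes bounded away from $0$, while for $|\xi|\ge r_0$ every eigenvalue has real part $\le -c$. Splitting the Plancherel integral at $|\xi|=r_0$ then produces the standard estimate $\|\nabla^j e^{t\mathcal{A}}U_0\|_{L^2}\lesssim(1+t)^{-3/4-j/2}\|U_0\|_{L^1}+e^{-ct}\|\nabla^j U_0\|_{L^2}$, and tracking the low-frequency spectral projections gives the matching lower bound $\|e^{t\mathcal{A}}U_0\|_{L^2}\gtrsim(1+t)^{-3/4}$ for generic data, which underlies the optimality claim.

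\noindent\textbf{Step 3: nonlinear decay and optimality.} With the Duhamel formula $U(t)=e^{t\mathcal{A}}U_0+\int_0^t e^{(t-\tau)\mathcal{A}}\mathcal{N}(U)(\tau)\,d\tau$, where $\mathcal{N}(U)$ collects the quadratic terms, I would feed in the linear estimate of Step~2, the uniform bound \eqref{main-est} of Step~1, and the bilinear bound $\|\mathcal{N}(U)\|_{L^1}\lesssim\|U\|_{L^2}\|\nabla U\|_{L^2}$. The hard part is the one-derivative loss in the hyperbolic $(a,u)$-component, which makes the high-frequency part of the linear estimate unusable at the top order $j=s$; I would get around this by a frequency decomposition — treating low frequencies directly through Duhamel, where $\|\nabla^j(\cdot)_{\mathrm{low}}\|_{L^2}\sim\|(\cdot)_{\mathrm{low}}\|_{L^2}$ so no regularity is lost, and controlling the high-frequency part together with the top-order derivatives through the energy inequality of Step~1, which, with the low-frequency piece as a forcing term, decays exponentially in that regime — and then closing \eqref{Upper} for all $0\le j\le s$ by induction on $j$ and a time-weighted continuity argument for $\sup_{\tau\le t}\sum_{j=0}^{s}(1+\tau)^{3/4+j/2}\|\nabla^j U(\tau)\|_{L^2}$. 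Optimality then follows by comparing $U(t)$ with $e^{t\mathcal{A}}U_0$: the remainder $\int_0^t e^{(t-\tau)\mathcal{A}}\mathcal{N}(U)\,d\tau$ is shown to decay strictly faster, so the linear lower bound from Step~2 persists. I expect the most delicate point to be this last step, namely propagating the sharp rate through all $s$ derivatives without losing regularity, with the sign-definiteness of the combined drag terms in Step~1 being the other place where the precise structure of \eqref{Main1} is used in an essential way.
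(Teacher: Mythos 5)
Your proposal is correct and follows essentially the same route as the paper: (i) a Lyapunov estimate built from $H^s$ energy estimates plus the cross term $\sum_k\int\nabla^{k-1}u\cdot\nabla^k a\,dx$, with the $\frac{1}{c}$-weight on the $v$-equation making the two drag terms combine into $-\|u-v\|_{H^s}^2$; (ii) Helmholtz decomposition to eliminate $\nabla P$ and reduce the linearization to two $2\times 2$ blocks whose eigenvalues give the $(1+t)^{-3/4-j/2}$ linear rates; (iii) Duhamel plus low/high frequency splitting, feeding the low-frequency decay into the energy inequality to recover the sharp rate at the top order, then bootstrapping in $j$. The only cosmetic difference is that the paper runs the time-weighted argument first for $M(t)=\sup_\tau(1+\tau)^{3/4}\|(\phi,u,v)\|_{H^s}$ and then improves derivative by derivative (also using the extra damping in $u-v$ to gain $(1+t)^{-5/4}$), rather than closing a single weighted sum over all $j$ at once, but the two organizations are equivalent.
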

	\begin{rem}\label{rem1}
	By the Sobolev inequality and \eqref{Upper}, it follows for any $p\in [2,6]$ and $0\leq j\leq s-1$ that
	\begin{equation}\label{Upper-p}
		\begin{aligned}
			\|\nabla^j(a-a_*,u,v)(t)\|_{L^p(\mathbb{R}^3)}&\leq  C(1+t)^{-\frac{3}{2}(1-\frac{1}{p})-\frac{j}{2}}.
		\end{aligned}
	\end{equation}
	Moreover, it holds for $0\leq j\leq s-2$ that
	\begin{equation}\label{Upper-inf}
		\begin{aligned}
			\|\nabla^j(a-a_*,u,v)(t)\|_{L^\infty(\mathbb{R}^3)}&\leq  C(1+t)^{-\frac{3+j}{2}}.
		\end{aligned}
	\end{equation}
\end{rem}

\begin{rem}
	Due to the damping structure arising from the drag force, the difference of velocities $(u-v)$ has a faster time decay rates satisfying
	\begin{align}
	\|(u-v)(t)\|_{L^2}\leq C(1+t)^{-\frac{5}{4}}.	
	\end{align}
\end{rem}
	It should be noted that the above time decay rates \eqref{Upper} are optimal. Indeed, we can obtain the lower bound of the time decay rates as follows.
	\begin{thm}\label{thm3}
		Assume the conditions in Theorem \ref{thm1} hold. If the Fourier transform $(\hat{\phi}_0(\xi),\hat{u}_0(\xi),\hat{v}_0(\xi))$ of the initial perturbation $(\phi_0,u_0,v_0) \triangleq (a_0-a_*,u_0,v_0)$ satisfies
		\begin{equation}\label{in-data-optimal}
			\inf_{ |\xi|<r_0}|\hat{\phi}_0(\xi)|\geq c_0>0, \quad \Big({\rm{I}}-\frac{\xi\xi^t}{|\xi|^2}\Big)\hat{u}_0=0, \quad  \inf_{ |\xi|<r_0 }|\hat{v}_0(\xi)|\geq c_0>0,
		\end{equation}
		where $c_0$ denotes a positive constant and $r_0$ is sufficiently small, then the global solution $(a,u,v)$ given by Theorem \ref{thm1} satisfies for large-time that
		\begin{equation}\label{optimal-est}
			d_*(1+t)^{-\frac{3}{4}-\frac{j}{2}}\leq \|\nabla^j(a-a_*,u,v)(t)\|_{L^2(\mathbb{R}^3)}\leq C(1+t)^{-\frac{3}{4}-\frac{j}{2}}, \quad 0\leq j\leq s, 
		\end{equation}
		where $d_*$ and $C$ are positive constants independent of time.
	\end{thm}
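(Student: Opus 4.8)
The plan is to establish only the lower bound in \eqref{optimal-est}, the upper bound being already furnished by Theorem \ref{thm1}. The idea is the classical one: compare the solution with that of the linearized problem and show the nonlinear correction is negligible against the linear part, so that the lower bound is inherited. First I would linearize \eqref{Main2} about $(a_*,0,0)$, freezing the coefficient $e^{a}$ in $\eqref{Main2}_3$ to its equilibrium value $\rho_*$, and apply the Helmholtz--Hodge decomposition $u=\mathcal{P}u+\mathcal{Q}u$ with $\mathcal{P}$ the Leray projection onto divergence-free fields. Writing $\phi=a-a_*$ and $d=\Lambda^{-1}\mathrm{div}\,u$ with $\Lambda=(-\Delta)^{1/2}$, the linearized system decouples into a \emph{compressible} block
\[
\partial_t\phi+\Lambda d=0,\qquad \partial_t d-\Lambda\phi+d=0,
\]
and an \emph{incompressible} block (the pressure $\nabla P$ being annihilated by $\mathcal{P}$)
\[
\partial_t(\mathcal{P}u)+\mathcal{P}u-v=0,\qquad \partial_t v-\Delta v-\rho_*(\mathcal{P}u-v)=0 .
\]
In Fourier variables each block has a symbol of determinant $|\xi|^2$, so as $|\xi|\to 0$ it carries one ``slow'' eigenvalue $\lambda_1(\xi)=-c_1|\xi|^2+O(|\xi|^4)$ and one ``fast'' eigenvalue $\lambda_2(\xi)=-c_2+O(|\xi|^2)$ with $c_1,c_2>0$, while all eigenvalues have real part $\le-\kappa<0$ on $\{|\xi|\ge r\}$. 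This is precisely the spectral structure behind the $L^1$--$L^2$ rates in Theorem \ref{thm1}, and I would reuse it with the low/high-frequency semigroup bounds derived there.

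Next I would prove the matching lower bound for the linear flow $(\phi^L,u^L,v^L)=e^{t\mathcal{L}}(\phi_0,u_0,v_0)$. Expanding along the spectral projectors, on $\{|\xi|<r_0\}$ the leading term of $\widehat{\phi^L}(\xi,t)$ is $e^{\lambda_1(\xi)t}$ times a coefficient equal to $\hat\phi_0(\xi)$ up to an $O(|\xi|)$ correction coming from $\widehat{\mathrm{div}\,u_0}$ (which vanishes at $\xi=0$), plus an $O(e^{-\kappa t})$ remainder; likewise, once $\mathcal{P}u_0=0$ --- the middle hypothesis in \eqref{in-data-optimal} --- the leading term of $\widehat{v^L}(\xi,t)$ (and of $\widehat{\mathcal{P}u^L}(\xi,t)$) is $e^{\lambda_1(\xi)t}$ times a coefficient comparable to $\hat v_0(\xi)$, while the compressible velocity part $\widehat{\mathcal{Q}u^L}$ decays a half-power faster. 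Using $\inf_{|\xi|<r_0}|\hat\phi_0|\ge c_0$ and $\inf_{|\xi|<r_0}|\hat v_0|\ge c_0$, then taking $r_0$ small and $t\ge T_0$ large, I get $|\widehat{\phi^L}(\xi,t)|\ge\tfrac{c_0}{2}\,e^{-2c_1|\xi|^2 t}$ on $\{|\xi|<r_0\}$ (and analogously for $\widehat{v^L}$); restricting Plancherel's identity to that ball and rescaling $\eta=\sqrt{t}\,\xi$ gives, for large $t$,
\[
\|\nabla^j(\phi^L,u^L,v^L)(t)\|_{L^2}^2\ \ge\ \int_{|\xi|<r_0}|\xi|^{2j}\,|\widehat{\phi^L}(\xi,t)|^2\,d\xi\ \ge\ c_*\,(1+t)^{-\frac{3}{2}-j},
\]
with $c_*$ proportional to $c_0^2$. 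The same argument in fact produces the lower bound for each of the components $a-a_*$, $u$, $v$ separately, which explains why all three hypotheses in \eqref{in-data-optimal} are imposed.

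Then I would estimate the nonlinear remainder $(\phi,u,v)^{\mathrm{nl}}:=(a-a_*,u,v)-(\phi^L,u^L,v^L)$, which solves the linear system with zero data and a source $N$ that is quadratic (and higher) in $(a-a_*,u,v)$. By \eqref{Upper} one has $\|N(\tau)\|_{L^1}\lesssim\|(a-a_*,u,v)(\tau)\|_{L^2}\|\nabla(a-a_*,u,v)(\tau)\|_{L^2}\lesssim\varepsilon_0^2(1+\tau)^{-2}$ and $\|N(\tau)\|_{H^1}$ decays rapidly (here the faster decay $\|(u-v)(\tau)\|_{L^2}\lesssim(1+\tau)^{-5/4}$ is used to keep the non-conservative drag term $(e^a-\rho_*)(u-v)$ under control). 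Hence Duhamel's formula together with the low/high-frequency semigroup estimates yields, for $j\in\{0,1\}$,
\[
\|\nabla^j(\phi,u,v)^{\mathrm{nl}}(t)\|_{L^2}\ \lesssim\ \int_0^t(1+t-\tau)^{-\frac{3}{4}-\frac{j}{2}}\|N(\tau)\|_{L^1}\,d\tau+\int_0^t e^{-\kappa(t-\tau)}\|N(\tau)\|_{H^{j}}\,d\tau\ \lesssim\ \varepsilon_0^2\,(1+t)^{-\frac{3}{4}-\frac{j}{2}}.
\]

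Finally, for $j\in\{0,1\}$ and $t\ge T_0$ the triangle inequality combined with the linear lower bound gives
\[
\|\nabla^j(a-a_*,u,v)(t)\|_{L^2}\ \ge\ \|\nabla^j(\phi^L,u^L,v^L)(t)\|_{L^2}-\|\nabla^j(\phi,u,v)^{\mathrm{nl}}(t)\|_{L^2}\ \ge\ \big(\sqrt{c_*}-C\varepsilon_0^2\big)(1+t)^{-\frac{3}{4}-\frac{j}{2}},
\]
which is $\ge\tfrac{1}{2}\sqrt{c_*}\,(1+t)^{-\frac{3}{4}-\frac{j}{2}}$ once $\varepsilon_0$ is small enough (the linear constant $\sqrt{c_*}$ is of order $c_0$, hence comparable to the size of the data, while the correction is quadratic in it). The remaining orders $2\le j\le s$ follow by induction from the elementary bound $\|\nabla^{j-1}U\|_{L^2}^2\le\|\nabla^{j-2}U\|_{L^2}\|\nabla^{j}U\|_{L^2}$ with $U=(a-a_*,u,v)$: rearranged as $\|\nabla^{j}U\|_{L^2}\ge\|\nabla^{j-1}U\|_{L^2}^2/\|\nabla^{j-2}U\|_{L^2}$, it turns the $(j-1)$-th lower bound and the $(j-2)$-th upper bound from Theorem \ref{thm1} into $\|\nabla^j U(t)\|_{L^2}\gtrsim(1+t)^{-\frac{3}{4}-\frac{j}{2}}$ (the exponents match exactly). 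For $0\le t\le T_0$ the bound is clear by continuity, since the non-degeneracy in \eqref{in-data-optimal} keeps the norm bounded below by a positive constant there; taking $d_*$ to be the smaller of the two constants finishes the proof. The step I expect to be the main obstacle is the nonlinear remainder estimate: the $L^1$--$L^2$ mapping of the linear semigroup only delivers the borderline rate $(1+t)^{-3/4}$ for a generic $L^1$ source, so the decisive gain has to come from the genuine smallness ($\varepsilon_0^2$) of the quadratic source --- and one must ensure that neither the pressure $\nabla P$ (removed by $\mathcal{P}$) nor the non-conservative drag term $(e^a-\rho_*)(u-v)$ (tamed by the faster decay of $u-v$) spoils the estimate; the passage to high derivatives via interpolation is then routine.
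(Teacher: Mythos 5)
Your proposal follows the same overall strategy as the paper: establish a spectral lower bound for the linear flow, show the nonlinear Duhamel contribution is subordinate so the full solution inherits the linear rate, then pass to higher derivatives by interpolation. The main structural difference lies in the last step. The paper establishes the lower bound only for $j=0$ and proves separately the auxiliary upper bound $\|\Lambda^{-1}(\phi,u,v)(t)\|_{L^2}\leq C(1+t)^{-1/4}$, then invokes the interpolation of Lemma~\ref{lema5}, $\|U\|_{L^2}\leq C\|\Lambda^{-1}U\|_{L^2}^{j/(j+1)}\|\nabla^j U\|_{L^2}^{1/(j+1)}$, to extract the lower bound for all $1\leq j\leq s$ from the $j=0$ case alone. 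You instead establish the $j=0$ and $j=1$ lower bounds directly from Duhamel, and climb the ladder via $\|\nabla^{j-1}U\|_{L^2}^2\leq\|\nabla^{j-2}U\|_{L^2}\|\nabla^{j}U\|_{L^2}$ together with the already-known upper bound at order $j-2$. Both are valid; yours avoids the negative Sobolev norm at the cost of needing one more linear lower bound at first derivative order, and the paper's single $\Lambda^{-1}$ estimate trades the opposite way. The exponent bookkeeping in your induction checks out.

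One point you should tighten is the claimed remainder bound $\|N(\tau)\|_{L^1}\lesssim\varepsilon_0^2(1+\tau)^{-2}$. The decay rates of Theorem~\ref{thm1} carry a constant depending on the initial data, in particular on $\mathcal{I}_0=\|(\phi_0,u_0,v_0)\|_{L^1}$, which is assumed bounded but \emph{not} small; feeding those rates into both factors of a quadratic term gives a prefactor of order $(\varepsilon_0+\mathcal{I}_0)^2$, not $\varepsilon_0^2$, and that would not necessarily be small compared to $c_0$. To extract the decisive smallness you must ensure at least one factor scales with $\varepsilon_0$: either bound it by the uniform-in-time energy estimate $\|(\phi,u,v)\|_{H^s}\leq C\varepsilon_0$ (then using $(1+\tau)^{-5/4}$ from the derivative factor still closes the Duhamel integral), or, as the paper does in \eqref{N001}--\eqref{thm3-p2}, keep one factor under the integral in $\tau$ and apply Cauchy--Schwarz in time against the dissipation estimate $\int_0^t(\|\nabla(\phi,u,v)\|_{L^2}^2+\|u-v\|_{L^2}^2)\,d\tau\leq C\varepsilon_0^2$. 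Either route yields a remainder of size $\varepsilon_0(\varepsilon_0+\mathcal{I}_0)(1+t)^{-3/4}$, which is small for $\varepsilon_0$ small with $c_0,\mathcal{I}_0$ fixed; after this correction your argument closes.
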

	
		Now we sketch the main ideas. The main difficulty to prove Theorem \ref{thm1} comes from the pressure term $\nabla P$ in the incompressible NS equations. To overcome the difficulty, we employ Hodge decomposition to separate $\nabla P$ into its linear and nonlinear components, i.e.,
		\begin{align}
			\nabla P=-c\nabla(-\Delta)^{-1}\text{div}(u-v)+\nabla(-\Delta)^{-1}\text{div}\Big(v\cdot\nabla v-c(e^\phi-1)(u-v)\Big),
		\end{align}
	where $c=\rho_*$. Then the system \eqref{Main2} around $(a_{*},0,0)$ is reduced to a perturbation system 
	\begin{equation}\label{Main5}
	\left\{
	\begin{aligned}
		&\partial_t\phi +{\rm{div}}u=f_1, \\
		&\partial_tu+\nabla \phi+u-v=f_2,\\
		&\partial_tv+cv-c\mathcal{J}u-\Delta v=f_3,
	\end{aligned}
	\right.
\end{equation}
	and the nonlinear terms $f_1,f_2,f_3$ satisfy
\begin{equation}
	\begin{aligned}
		f_1=-u\cdot\nabla\phi,~f_2=-u\cdot\nabla u,~~f_3=-\mathcal{J}(v\cdot\nabla v)+\mathcal{J}(c(e^{\phi}-1)(u-v)),
	\end{aligned}
\end{equation}
	with the initial data
	\begin{equation}
		(\phi,u,v)|_{t=0}=(\phi_0,u_0,v_0),
	\end{equation}
	where $\phi=a-a_*,~\phi_0=a_0-a_*$, and 
	\begin{align}\label{121101}
	\mathcal{J}\triangleq{\rm{I}}+\nabla(-\Delta)^{-1}\text{div}.
	\end{align}
By Duhamel's principle, we obtain the solution $U=(\phi,u,v)^t$ of \eqref{Main5} as follows,
\begin{align}\label{121501}
U(t,x)=G*U_0+\int_0^tG(t-\tau)*F(\tau)d\tau,
\end{align}
where $G(t,x)$ is the Green function for the linear part of \eqref{Main5} and $F=(f_1,f_2,f_3)^t$. We first carefully analyze the Green function $G(t,x)$ and obtain its time decay rates, then we use the formula \eqref{121501} and energy method to prove the global existence of solution, and further obtain
	\begin{equation}\label{HT11}
		\|(\phi,u,v)\|_{H^s}\leq C(1+t)^{-\frac{3}{4}}, \quad s\geq 3.
	\end{equation}
It should be mentioned that the above decay rates for high-order derivatives are slow. To improve the decay rates, we decompose the solution into low-frequency  and high-frequency part, and then obtain 
	\begin{equation}\label{JU2}
		\frac{d}{dt}\|\nabla (\phi,u,v)\|_{H^{s-1}}^2+C_2\|\nabla (\phi,u,v)\|_{H^{s-1}}^2\leq C\|\nabla(\phi^\ell,u^\ell,v^\ell)\|_{L^2}^2,
	\end{equation}
where $(\phi^\ell,u^\ell,v^\ell)$ denotes the low-frequency part. That is the time decay rates of $\|\nabla(\phi,u,v)\|_{H^{s-1}}$  is dominated by low-frequency part. We use spectral analysis to get
	\begin{equation}\label{JU1}
		\|\nabla(\phi^\ell,u^\ell,v^\ell)\|_{L^2}\leq C(1+t)^{-\frac{5}{4}}.
	\end{equation}
Substituting \eqref{JU1} into \eqref{JU2} and using Gr$\ddot{\text{o}}$nwall's inequality yield 
	\begin{equation}\label{HT1}
		\|\nabla(\phi,u,v)\|_{H^{s-1}}\leq C(1+t)^{-\frac{5}{4}},
	\end{equation}
which is indeed better than those in \eqref{HT11}. Similarly, we can get better decay estimates for higher order derivatives of the solution. Finally, we have
	\begin{equation}\label{HT2}
	\|\nabla^j(\phi,u,v)\|_{L^2}\leq C(1+t)^{-\frac{3}{4}-\frac{j}{2}},\quad 0\leq j\leq s.
\end{equation}

The remaining task is to prove the rates in \eqref{HT2} are optimal by establishing lower bound decay estimates. Firstly, we show that for some constant $c_0>0$,
\begin{equation}\label{HT3}
\|(\bar{\phi},\bar{u},\bar{v})\|_{L^2}\geq c_0(1+t)^{-\frac{3}{4}},
\end{equation}
where $(\bar{\phi},\bar{u},\bar{v})=G*\bar{U}_0$ is a solution of linear equations of \eqref{Main5} with a special initial data $\bar{U}_0$. Secondly, we use \eqref{121501} and the upper bound \eqref{HT2} to achieve
\begin{align}
\Big\|\int_0^tG(t-\tau)*F(\tau)d\tau\Big\|_{L^2}\leq C(1+t)^{-\frac{3}{4}}(\varepsilon_0^2+\varepsilon_0 \mathcal{I}_0).
\end{align}
Due to the smallness of $\varepsilon_0$ and \eqref{HT3}, the triangle inequality implies 
\begin{equation}\label{Ht4}
\|(\phi,u,v)\|_{L^2}\geq \frac{1}{2}c_0(1+t)^{-\frac{3}{4}}.
\end{equation}
On other hand, we can prove $\|\Lambda^{-1}(\phi,u,v)\|_{L^2}\leq C(1+t)^{-\frac{1}{4}}$, which together with \eqref{Ht4} and for any $0\leq j\leq s$,
$$\|(\phi,u,v)\|_{L^2}\leq C\|\Lambda^{-1}(\phi,u,v)\|_{L^2}^{\frac{j}{j+1}}\|\nabla^j(\phi,u,v)\|_{L^2}^{\frac{1}{j+1}},
$$
yields that
	\begin{equation}
	\|\nabla^j(\phi,u,v)\|_{L^2}\geq C_*(1+t)^{-\frac{3}{4}-\frac{j}{2}}.
\end{equation}	
	
There also has been important progress on the well-posedness and dynamic behaviors of the solutions to the Euler-Navier-Stokes system and related models. We refer to \cite{MR3546341,MR4175837,Wuzhou2021,MR1643525,MR4316126,Choi-Jung2021} and the references therein.

The rest of the paper is organized as follows. In Section \ref{Sec2}, we introduce some notations and auxiliary lemmas used in the proof of the main results. Section \ref{Sec3} is related to the \emph{a priori} estimates which can extend the local solution to a global one. In Section \ref{Sec4}, we investigate the large-time behavior of the solutions. Finally, the proof of Theorems \ref{thm1}-\ref{thm3} will be given in Section \ref{Sec5}.
	
	\section{Preliminaries}\label{Sec2}
	\subsection{Notation}
	\hspace{2em}In this section, we first introduce the notation and conventions used throughout the paper.
	$L^p(\mathbb{R}^3)$ and $W^{k,p}(\mathbb{R}^3)$ denote the usual Lebesgue and Sobolev space on $\mathbb{R}^3$, with norms $\|\cdot\|_{L^p}$ and $\|\cdot\|_{W^{k,p}}$, respectively.  When $p=2$, we denote $W^{k,p}(\mathbb{R}^3)$ by $H^k(\mathbb{R}^3)$ with the norm $\|\cdot\|_{H^k}$, and set
	$$
	\|u\|_{H^k(\mathbb{R}^3)}=\|u\|_{H^k},\quad \|u\|_{L^p(\mathbb{R}^3)}=\|u\|_{L^p}.
	$$
	We denote by $C$ a generic positive constant which may vary in different estimates. $f_1\lesssim f_2$ describes that there exists a constant $C>0$ such that $f_1\leq C f_2$. The symbol $f_1\sim f_2$ represents the functions $f_1$ and $f_2$ are equivalent, which means that there exist positive constants $C_1, C_2$ such that $f_1\leq C_1f_2$ and $f_2\leq C_2f_1$. For an integer $k$,  the symbol $\nabla^k$ denotes the summation of all terms $D^\ell=\partial_{x_1}^{\ell_1}\partial_{x_2}^{\ell_2}\partial_{x_3}^{\ell_3}$ with the multi-index $\ell$ satisfying $|\ell|=\ell_1+\ell_2+\ell_3=k$. For a function $f$, $\|f\|_{X}$ denotes the norm of $f$ on $X$. $\|(f,g)\|_{X}$ denotes $\|f\|_{X}+\|g\|_{X}$. The Fourier transform of $f$ is denoted by $\hat{f}$ or $\mathscr{F}[f]$ satisfying
	\begin{equation}\hat{f}(\xi)
		=\mathscr{F}[f](\xi)
		=(2\pi)^{-\frac{3}{2}}\int_{\mathbb{R}^3} f(x)e^{-ix\cdot\xi}dx,\quad \xi\in\mathbb{R}^3.
	\end{equation}
	Let $\Lambda^k$ be the pseudodifferential operator defined by
	\begin{equation}\label{Lambda}
		\Lambda^k f=\mathscr{F}^{-1}\Big(|\xi|^k \hat{f}(\xi)\Big)\ \ \text{for}\ k\in \mathbb{R}.
	\end{equation}
	We define operators $\mathcal{K}_{1}$ and $\mathcal{K}_{\infty}$ on $L^{2}$ by
	\begin{equation}\label{THZ5}
		\mathcal{K}_{1}f=f^\ell=\mathscr{F}^{- 1}\big(\hat\chi_{1}(\xi)\mathscr{F}[f](\xi)\big),\quad 	\mathcal{K}_{\infty}f=f^h=\mathscr{F}^{- 1}\big(\hat\chi_{\infty}(\xi)\mathscr{F}[f](\xi)\big),
	\end{equation}
	where $\hat{\chi}_{j}(\xi)(j=1,\infty)\in C^{\infty}(\mathds{R}^{3})$, $0\leq \hat\chi_{j}\leq 1$ are smooth cut-off functions defined by
	\begin{equation*}
		\hat\chi_{1}(\xi)=\left\{
		\begin{array}{l}
			1\quad (|\xi|\leq r_{0}),\\
			0\quad (|\xi|\geq R_0),
		\end{array}
		\right.
		\quad\hat\chi_{\infty}(\xi)=1-\hat\chi_{1}(\xi),
	\end{equation*}
	where the positive constant $r_0$ is sufficiently small and $R_0$ is sufficiently large.
	
	To analyze the large-time behavior of the solutions $U(t,x)=(\phi,u,v)^t$ in frequency space, we adopt the low-high frequency decomposition for the solution: 
	\begin{equation}\label{l-h-com}
		U(t,x)=U^{\ell}(t,x)+U^{h}(t,x)\triangleq(\phi^{\ell},u^{\ell},v^{\ell})+(\phi^{h},u^{h},v^{h}),
	\end{equation}
	where $U^{\ell}(t,x)\triangleq\mathcal{K}_1U(t,x)$ is the low-frequency part and $U^{h}(t,x)\triangleq\mathcal{K}_{\infty}U(t,x)$ represents the high-frequency part. The operators $\mathcal{K}_1$ and $\mathcal{K}_{\infty}$ has been introduced in \eqref{THZ5}.
	\subsection{Auxiliary lemmas}
	\hspace{2em}In this subsection, we introduce some elementary inequalities and auxiliary lemmas that are used extensively in the proof of the main theorems in this paper. 
	\begin{lem}(\cite[Lemma A.3]{MR2917409})\label{lema2}
		Let $m\geq 1$ be an integer and define the communicator 
		\begin{equation}
			[\nabla^m,f]g=\nabla^{m}(fg)-f\nabla^m g.
		\end{equation}
		Then we have
		\begin{equation}
			\big\|[\nabla^m,f]g\big\|_{L^p}\lesssim \|\nabla f\|_{L^{p_1}}\|\nabla^{m-1}g\|_{L^{p_2}}+
			\|\nabla^m f\|_{L^{p_3}}\|g\|_{L^{p_4}},
		\end{equation}
		where $p, p_2, p_3\in(1,+\infty)$ and 
		\begin{equation}
			\frac{1}{p}=\frac{1}{p_1}+\frac{1}{p_2}=\frac{1}{p_3}+\frac{1}{p_4}.
		\end{equation}
	\end{lem}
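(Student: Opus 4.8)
The plan is to derive this Kato–Ponce–type commutator estimate by elementary means, exploiting the fact that $m$ is an \emph{integer} so that the multi-index Leibniz rule is available. First I would observe that for a fixed multi-index $\gamma$ with $|\gamma|=m$,
$$D^{\gamma}(fg)=\sum_{\alpha\leq\gamma}\binom{\gamma}{\alpha}D^{\alpha}f\,D^{\gamma-\alpha}g ,$$
so that subtracting off the single term with $\alpha=0$ exhibits $[\nabla^{m},f]g$ as a finite linear combination of products $D^{\alpha}f\,D^{\beta}g$ with $1\leq|\alpha|\leq m$ and $|\alpha|+|\beta|=m$ (the point being that $f$ is differentiated at least once in every surviving term). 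It therefore suffices to bound $\|D^{\alpha}f\,D^{\beta}g\|_{L^{p}}$ for each such pair; writing $k=|\alpha|\in\{1,\dots,m\}$ and $|\beta|=m-k$, and using that, under the convention of the paper, $\nabla^{j}$ dominates every $j$-th order derivative, this reduces to estimating $\|\nabla^{k}f\|_{L^{a}}\|\nabla^{m-k}g\|_{L^{b}}$ for suitable $a,b$.

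For each $k\in\{1,\dots,m\}$ set $\theta:=\frac{k-1}{m-1}\in[0,1]$ (when $m=1$ the only case is $k=1$, which follows directly from H\"older's inequality and produces the first term on the right-hand side). I would then choose intermediate exponents $a,b$ by
$$\frac{1}{a}=(1-\theta)\frac{1}{p_{1}}+\theta\frac{1}{p_{3}},\qquad \frac{1}{b}=(1-\theta)\frac{1}{p_{2}}+\theta\frac{1}{p_{4}},$$
so that $\frac{1}{a}+\frac{1}{b}=(1-\theta)\frac{1}{p}+\theta\frac{1}{p}=\frac{1}{p}$, and H\"older's inequality gives $\|\nabla^{k}f\,\nabla^{m-k}g\|_{L^{p}}\leq\|\nabla^{k}f\|_{L^{a}}\|\nabla^{m-k}g\|_{L^{b}}$. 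Next I would invoke the homogeneous Gagliardo--Nirenberg inequalities
$$\|\nabla^{k}f\|_{L^{a}}\lesssim\|\nabla f\|_{L^{p_{1}}}^{1-\theta}\|\nabla^{m}f\|_{L^{p_{3}}}^{\theta},\qquad \|\nabla^{m-k}g\|_{L^{b}}\lesssim\|\nabla^{m-1}g\|_{L^{p_{2}}}^{1-\theta}\|g\|_{L^{p_{4}}}^{\theta},$$
in which the dimensional/scaling balance is exactly the pair of identities defining $a$ and $b$, while the order constraints read $k=(1-\theta)\cdot1+\theta\cdot m$ and $m-k=(1-\theta)(m-1)+\theta\cdot0$, both of which hold by the definition of $\theta$. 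Multiplying the two bounds and applying Young's inequality $x^{1-\theta}y^{\theta}\leq(1-\theta)x+\theta y\leq x+y$ yields
$$\|\nabla^{k}f\,\nabla^{m-k}g\|_{L^{p}}\lesssim\|\nabla f\|_{L^{p_{1}}}\|\nabla^{m-1}g\|_{L^{p_{2}}}+\|\nabla^{m}f\|_{L^{p_{3}}}\|g\|_{L^{p_{4}}},$$
and summing over the finitely many $k\in\{1,\dots,m\}$ and $\gamma$ with $|\gamma|=m$ gives the asserted estimate.

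The one step that genuinely needs care is the applicability of the two Gagliardo--Nirenberg inequalities at the \emph{borderline} interpolation exponent $\theta=\frac{k-1}{m-1}$: this family of inequalities has well-known exceptional configurations (the $\theta=1$ "integer-gap" case, and cases in which an intermediate Lebesgue exponent would be forced to equal $1$ or $\infty$) in which it fails. Here one checks that the hypotheses $p,p_{2},p_{3}\in(1,\infty)$ together with $\frac1p=\frac1{p_1}+\frac1{p_2}=\frac1{p_3}+\frac1{p_4}$ force $a,b\in(1,\infty)$ and keep both inequalities away from every degenerate case (in particular $\theta=1$ occurs only when $k=m$, which is already dispatched by H\"older alone). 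An essentially equivalent alternative, which in addition covers non-integer $m$ and is perhaps cleaner to make fully rigorous, is to split $fg$ through Bony's paraproduct decomposition and estimate each Littlewood--Paley block of $[\nabla^{m},f]g$ using the $L^{p}$-boundedness of the Littlewood--Paley projections (which is where $p\in(1,\infty)$ enters); for integer $m$, however, the elementary argument sketched above is self-contained modulo the standard Gagliardo--Nirenberg inequality.
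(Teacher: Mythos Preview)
The paper does not supply its own proof of this lemma; it is quoted verbatim from \cite[Lemma~A.3]{MR2917409} and used as a black box throughout Section~\ref{Sec3}. Your argument---Leibniz expansion followed by Gagliardo--Nirenberg interpolation of each intermediate product $\nabla^{k}f\,\nabla^{m-k}g$ with interpolation parameter $\theta=\tfrac{k-1}{m-1}$---is precisely the standard proof of this Kato--Ponce-type estimate for integer $m$, and it is correct as written (including your remarks on the endpoint cases $k=1$ and $k=m$, which are handled by H\"older alone, and on the exponent restrictions needed to keep the two interpolation inequalities in the admissible range). There is therefore nothing to compare: you have supplied a complete proof where the paper only gives a citation.
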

	\begin{lem} \label{Ap2} (Gagliardo-Nirenberg inequality, \cite{MR109940} or \cite[Lemma A.1]{MR2917409})\label{lema3}
		Let $l,s$ and $k$ be any real numbers satisfying $0\leq
		l,s<k$, and let  $p, r, q \in [1,\infty]$ and $\frac{l}{k}\leq
		\theta\leq 1$ such that
		$$\frac{l}{3}-\frac{1}{p}=\left(\frac{s}{3}-\frac{1}{r}\right)(1-\theta)+\left(\frac{k}{3}-\frac{1}{q}\right)\theta.
		$$ Then, for any $u\in W^{k,q}(\mathbb{R}^3),$  we have
		\begin{equation}
			\label{h20}
			\|\nabla^l u\|_{L^p}\lesssim \|\nabla^s u\|_{L^r}^{1-\theta}\|\nabla^k
			u\|_{L^q}^{\theta}.
		\end{equation}	
	\end{lem}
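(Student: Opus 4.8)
\textbf{Proof proposal for the Gagliardo--Nirenberg inequality (Lemma \ref{lema3}).}

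The plan is to prove the interpolation inequality \eqref{h20} in three stages: first reduce to the endpoint cases $\theta=l/k$ and $\theta=1$, then establish the two-parameter homogeneous estimate by a Littlewood--Paley (or Fourier) dyadic decomposition combined with Bernstein's inequality, and finally recover the general exponent range by a Hölder interpolation argument. Throughout, the scaling relation
$$\frac{l}{3}-\frac{1}{p}=\left(\frac{s}{3}-\frac{1}{r}\right)(1-\theta)+\left(\frac{k}{3}-\frac{1}{q}\right)\theta$$
is exactly the dimensional-analysis constraint forced by testing the inequality against $u_\lambda(x)=u(\lambda x)$ and letting $\lambda\to 0,\infty$; this is the reason no such inequality can hold without it, and it is the bookkeeping identity that makes every estimate below dimensionally consistent.

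The main step is the dyadic argument. Decompose $u=\sum_{j\in\mathbb{Z}}\Delta_j u$ into Littlewood--Paley pieces supported in annuli $|\xi|\sim 2^j$. Bernstein's inequality gives, for each frequency-localized piece and any exponents $1\le a\le b\le\infty$, the gain $\|\nabla^m \Delta_j u\|_{L^b}\lesssim 2^{j(m+3/a-3/b)}\|\Delta_j u\|_{L^a}$; applied with the three pairs $(p,l)$, $(r,s)$, $(q,k)$ appearing in the statement, one bounds $\|\nabla^l\Delta_j u\|_{L^p}$ by the geometric-mean-type quantity $\big(2^{js'}\|\Delta_j u\|_{L^r}\big)^{1-\theta}\big(2^{jk'}\|\Delta_j u\|_{L^q}\big)^{\theta}$ where the exponents $s',k'$ are chosen so that, courtesy of the scaling relation, the power of $2^j$ collapses to $0$. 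Summing over $j$ and using Hölder's inequality in the summation index $j$ (here is where one needs $l/k\le\theta$, which guarantees the relevant series converge from the appropriate side), followed by Minkowski's inequality to pull the $\ell^2_j$ sum inside the $L^p$ norm (valid since $p\ge 2$ in all our applications, or more generally by Littlewood--Paley square-function theory for $p\in(1,\infty)$), yields
$$\|\nabla^l u\|_{L^p}\lesssim \|\nabla^s u\|_{L^r}^{1-\theta}\|\nabla^k u\|_{L^q}^{\theta}$$
for the endpoint value $\theta=l/k$. The case $\theta=1$ is the classical Sobolev embedding $\|\nabla^l u\|_{L^p}\lesssim\|\nabla^k u\|_{L^q}$ under the corresponding scaling, proved the same way or quoted directly. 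For intermediate $\theta\in(l/k,1)$ one writes $l=(1-t)l_0+t\,l_1$ with $l_0/k$ and $1$ the two endpoints, applies the two endpoint inequalities, and closes by Hölder; alternatively one interpolates the single homogeneous estimate directly, since the map $\theta\mapsto$ (the inequality) is log-convex in the parameters.

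The step I expect to be the genuine obstacle is handling the $L^\infty$ and $L^1$ endpoints (the lemma explicitly allows $p,r,q\in[1,\infty]$): Littlewood--Paley square-function bounds and Minkowski's inequality are cleanest for $p\in(1,\infty)$, so the extreme values require either a separate direct argument (e.g. for $p=\infty$, sum $\|\nabla^l\Delta_j u\|_{L^\infty}$ absolutely using the fact that the dimensionally-balanced exponent of $2^j$ is strictly negative on one dyadic tail and strictly positive on the other, so the series is geometric and no square function is needed) or an approximation/duality maneuver. Since in the body of this paper the inequality is invariably invoked with $p\in[2,\infty]$ and with $s=0$ (so that the ``low'' endpoint $\theta=l/k$ is the relevant one and $\|u\|_{L^r}$ plays the role of the low-order term), I would present the clean dyadic proof for $p\in[2,\infty)$, dispatch $p=\infty$ by the direct geometric-series estimate just described, and remark that the remaining cases follow from the classical references \cite{MR109940} and \cite[Lemma A.1]{MR2917409} cited in the statement.
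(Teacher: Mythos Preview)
The paper does not prove this lemma at all: it is stated with an explicit attribution to Nirenberg's original paper \cite{MR109940} and to \cite[Lemma A.1]{MR2917409}, and no argument is given. So there is no ``paper's own proof'' to compare your proposal against; the authors simply invoke the classical result.

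As for your sketch itself, the Littlewood--Paley/Bernstein route you outline is a legitimate modern way to obtain Gagliardo--Nirenberg, and your identification of the $L^1$/$L^\infty$ endpoints as the delicate part is accurate. A couple of the details are a bit loose, however. Your framing of $\theta=l/k$ and $\theta=1$ as ``the endpoints'' is not quite how the proof organizes itself: for fixed $l,s,k,p,r,q$ the scaling relation already pins down $\theta$, so there is no interval of admissible $\theta$ to interpolate over, and the constraint $l/k\le\theta\le 1$ is a compatibility condition on the exponents rather than a parameter range you sweep through. In the dyadic argument, once Bernstein balances the powers of $2^j$ via the scaling identity, you still need a genuine mechanism to sum over $j$ (the series you obtain has no residual geometric factor to exploit), and this is exactly where square-function theory or a splitting into high/low frequency tails with a subsequent optimization over the split point enters --- your description conflates these two steps. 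None of this is fatal, but if you were actually writing the proof you would want to follow one of the cited references rather than the outline as it stands.
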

	
	
	\begin{lem}\label{lema5}
		Let $a\geq 0$ and integer $l \geq 0$, then we have
		\begin{equation}
			\|\nabla^l f\|_{L^2}\lesssim \|\nabla^{l+1}f\|_{L^2}^{1-\theta}\|\Lambda^{-a}f\|_{L^2}^\theta,\quad \text{where}~\theta=\frac{1}{1+l+a}.
		\end{equation}
	\end{lem}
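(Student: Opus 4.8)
The statement is an interpolation inequality: for $a \geq 0$ and integer $l \geq 0$,
$$\|\nabla^l f\|_{L^2} \lesssim \|\nabla^{l+1} f\|_{L^2}^{1-\theta} \|\Lambda^{-a} f\|_{L^2}^{\theta}, \qquad \theta = \frac{1}{1+l+a}.$$
The plan is to prove this entirely on the Fourier side via Plancherel's theorem and Hölder's inequality, which is the standard route for such homogeneous Littlewood–Paley-type estimates. First I would write all three norms as weighted $L^2$ norms of $\hat{f}$: by Plancherel, $\|\nabla^l f\|_{L^2}^2 = \int_{\mathbb{R}^3} |\xi|^{2l} |\hat{f}(\xi)|^2 \, d\xi$, and similarly $\|\nabla^{l+1} f\|_{L^2}^2 = \int |\xi|^{2(l+1)} |\hat{f}|^2 \, d\xi$ and $\|\Lambda^{-a} f\|_{L^2}^2 = \int |\xi|^{-2a} |\hat{f}|^2 \, d\xi$, using the definition \eqref{Lambda} of $\Lambda^k$.

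The key algebraic step is the pointwise identity on the exponents: one checks that
$$|\xi|^{2l} = \left(|\xi|^{2(l+1)}\right)^{1-\theta} \left(|\xi|^{-2a}\right)^{\theta}$$
holds exactly when $2l = 2(l+1)(1-\theta) - 2a\theta$, i.e. $l = (l+1) - (l+1)\theta - a\theta$, which rearranges to $\theta(l+1+a) = 1$, that is $\theta = \frac{1}{1+l+a}$ — precisely the stated value, and note $0 \le \theta \le 1$ since $l, a \ge 0$. Then I would split the integrand $|\xi|^{2l}|\hat f|^2 = \left(|\xi|^{2(l+1)}|\hat f|^2\right)^{1-\theta}\left(|\xi|^{-2a}|\hat f|^2\right)^{\theta}$ and apply Hölder's inequality with conjugate exponents $\frac{1}{1-\theta}$ and $\frac{1}{\theta}$ to obtain
$$\int_{\mathbb{R}^3} |\xi|^{2l} |\hat{f}|^2 \, d\xi \le \left(\int_{\mathbb{R}^3} |\xi|^{2(l+1)} |\hat{f}|^2 \, d\xi\right)^{1-\theta} \left(\int_{\mathbb{R}^3} |\xi|^{-2a} |\hat{f}|^2 \, d\xi\right)^{\theta}.$$
Taking square roots and translating back via Plancherel gives the claim, with implicit constant equal to $1$. (When $\theta = 0$ or $\theta = 1$ the statement is trivial, so one may assume $\theta \in (0,1)$ for the Hölder step.)

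The only delicate point — and the one I would flag as the main obstacle — is integrability near $\xi = 0$: when $a > 0$ the weight $|\xi|^{-2a}$ is singular at the origin, so the inequality is only meaningful when the right-hand side is finite, i.e. when $f$ lies in the homogeneous space $\dot{H}^{-a} \cap \dot{H}^{l+1}$; in the application these quantities are already controlled (e.g. the $L^1 \hookrightarrow \dot{H}^{-a}$-type bounds for $a \le 3/2$), so no issue arises. One should also note that Hölder's inequality requires each factor to be finite, but if either factor on the right is infinite the inequality is vacuous, so the argument is valid in all cases. No further machinery beyond Plancherel and Hölder is needed.
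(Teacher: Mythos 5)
Your proposal is correct and follows exactly the paper's route: Parseval's equality to pass to the Fourier side, the pointwise factorization of $|\xi|^{2l}$ with the stated $\theta$, and Hölder's inequality, which is precisely how the paper proves Lemma \ref{lema5}. The extra remarks on the trivial cases $\theta\in\{0,1\}$ and on finiteness of the right-hand side are sensible but not needed beyond what the paper records.
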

	\begin{proof}		
		According to the Parseval's equality, the definition of $\Lambda^{-a}f$ and H\"{o}lder's inequality, we get
		\begin{equation}
			\|\nabla^l f\|_{L^2}=\Big\||\xi|^l\hat{f}\Big\|_{L^2}
			\lesssim\big\||\xi|^{l+1}\hat{f}\big\|_{L^2}^{1-\theta}\big\||\xi|^{-a}\hat{f}\big\|_{L^2}^\theta=\|\nabla^{l+1}f\|_{L^2}^{1-\theta}\|\Lambda^{-a}f\|_{L^2}^\theta,
		\end{equation}
		for $\theta=\frac{1}{1+l+a}$. Hence, this completes the proof of this lemma.
	\end{proof}
	
	\begin{lem}\label{lema6}
		For $0\leq k<m$, there exist positive a constant $C$ such that for $f\in H^m$,
		\begin{equation}\label{h-l-est1}
			\|\nabla^{m} f^\ell\|_{L^2}\leq C\|\nabla^{k} f^\ell\|_{L^2},\quad \|\nabla^{k}f^h\|_{L^2}\leq C\|\nabla^{m}f^h\|_{L^2},
		\end{equation}
		and 
		\begin{equation}\label{h-l-est2}
			\|\nabla^{k} f^\ell\|_{L^2}\leq C\|\nabla^{k} f\|_{L^2},\quad\|\nabla^{k}f^h\|_{L^2}\leq C\|\nabla^{k}f\|_{L^2}.
		\end{equation}
	\end{lem}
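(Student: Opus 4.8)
The plan is to prove the frequency-localized estimates \eqref{h-l-est1} and \eqref{h-l-est2} by passing to Fourier space and exploiting the fact that $\hat\chi_1$ is supported in the ball $\{|\xi|\le R_0\}$ while $\hat\chi_\infty$ vanishes on $\{|\xi|\le r_0\}$. Since $\mathcal{K}_1$ and $\mathcal{K}_\infty$ act as Fourier multipliers, Plancherel's theorem converts every $L^2$ norm of a derivative into a weighted $L^2$ norm of $\hat f$ against a power of $|\xi|$, and the support properties of the cut-offs let us compare those weights.

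\emph{First}, for the first inequality in \eqref{h-l-est1}, I would write, using Plancherel,
\begin{equation}
\|\nabla^m f^\ell\|_{L^2}^2 = \int_{\mathbb{R}^3} |\xi|^{2m} |\hat\chi_1(\xi)|^2 |\hat f(\xi)|^2\, d\xi
= \int_{|\xi|\le R_0} |\xi|^{2(m-k)} |\xi|^{2k} |\hat\chi_1(\xi)|^2 |\hat f(\xi)|^2\, d\xi,
\end{equation}
and then bound $|\xi|^{2(m-k)}\le R_0^{2(m-k)}$ on the support of $\hat\chi_1$ (here $m-k>0$ is used), which gives $\|\nabla^m f^\ell\|_{L^2}\le R_0^{m-k}\|\nabla^k f^\ell\|_{L^2}$. \emph{Second}, for the second inequality in \eqref{h-l-est1}, the multiplier $\hat\chi_\infty$ is supported in $\{|\xi|\ge r_0\}$, so on that set $|\xi|^{2k} = |\xi|^{-2(m-k)}|\xi|^{2m}\le r_0^{-2(m-k)}|\xi|^{2m}$, and the same Plancherel computation yields $\|\nabla^k f^h\|_{L^2}\le r_0^{-(m-k)}\|\nabla^m f^h\|_{L^2}$. \emph{Third}, the two inequalities in \eqref{h-l-est2} follow immediately from $0\le\hat\chi_1\le 1$ and $0\le\hat\chi_\infty\le 1$: in Fourier space the integrand of $\|\nabla^k f^\ell\|_{L^2}^2$ (resp.\ $\|\nabla^k f^h\|_{L^2}^2$) is pointwise dominated by that of $\|\nabla^k f\|_{L^2}^2$, so the constant is simply $C=1$.

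There is no real obstacle here; the only point requiring a little care is that \eqref{h-l-est1} genuinely needs the strict inequalities $k<m$ (so that the exponent $m-k$ of $R_0$ or $r_0$ is positive) and the specific support properties of the cut-offs introduced in \eqref{THZ5}, namely that $\hat\chi_1$ vanishes for $|\xi|\ge R_0$ and $\hat\chi_\infty=1-\hat\chi_1$ vanishes for $|\xi|\le r_0$. One should also note that for $f\in H^m$ all the integrals above are finite, so the manipulations are justified; the constant $C$ depends only on $m-k$ and on the fixed radii $r_0,R_0$.
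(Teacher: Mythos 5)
Your proof is correct and takes essentially the same route as the paper: both pass to Fourier space via Plancherel/Parseval and exploit the support properties of $\hat\chi_1$ (bounded frequencies) and $\hat\chi_\infty$ (frequencies bounded away from zero) to compare the weights $|\xi|^{2k}$ and $|\xi|^{2m}$, and both use $0\le\hat\chi_j\le 1$ for \eqref{h-l-est2}. Your version is slightly more explicit in identifying the constants $R_0^{m-k}$ and $r_0^{-(m-k)}$, but the argument is the same.
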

	\begin{proof}
		According to Parseval's equality, there exists a positive constant $C$ such that for $k<m$
		$$
		\big\|\nabla^m f^\ell\big\|_{L^2}=\big\|(i\xi)^m\hat{\chi}_1(\xi)\hat{f}(\xi)\big\|_{L^2}\leq C \big\| |\xi|^{m-k}(i\xi)^k\hat{\chi}_1(\xi)\hat{f}(\xi)\big\|_{L^2}\leq C \big\|(i\xi)^k\hat{\chi}_1(\xi)\hat{f}(\xi)\big\|_{L^2}=C\|\nabla^kf^\ell\|_{L^2},
		$$
		and
		$$
		\big\|\nabla^kf^h\big\|_{L^2}= \big\|(i\xi)^k\hat{\chi}_\infty(\xi)\hat{f}(\xi)\big\|_{L^2}\leq C \big\| |\xi|^{k-m}(i\xi)^{m}\hat{\chi}_\infty(\xi)\hat{f}(\xi)\big\|_{L^2}\leq C \big\|(i\xi)^{m}\hat{\chi}_\infty(\xi)\hat{f}(\xi)\big\|_{L^2}=C\|\nabla^mf^h\|_{L^2}.
		$$
		Since the cut-off functions $\hat{\chi}_1(\xi)$ and $\hat{\chi}_\infty(\xi)$ are bounded, we also have
		$$
		\big\|\nabla^k f^\ell\big\|_{L^2}=\big\|(i\xi)^k\hat{\chi}_1(\xi)\hat{f}(\xi)\big\|_{L^2}\leq C\big\| (i\xi)^k \hat{f}(\xi)\big\|_{L^2}=C\|\nabla^kf\|_{L^2},
		$$
		and
		$$
		\big\|\nabla^k f^h\big\|_{L^2}=\big\|(i\xi)^k\hat{\chi}_\infty(\xi)\hat{f}(\xi)\big\|_{L^2}\leq C\big\| (i\xi)^k \hat{f}(\xi)\big\|_{L^2}=C\|\nabla^kf\|_{L^2}.
		$$
		Therefore we complete the proof of this lemma.  
	\end{proof}
	
	\section{The  \emph{a priori} estimates}\label{Sec3}
	\hspace{2em} In this section, we aim to establish the  \emph{a priori} estimates of classical solution for the nonlinear system \eqref{Main2}-\eqref{in-data}. First, we reformulate the original system into perturbed form and prove the local existence.
	\subsection{Reformulated system and local existence} 
	\label{sub:reformulated_problem}
\hspace{2em}	For notation convenience, we denote the perturbation of the density below
	\begin{equation}\label{new-var}
		\phi=a-a_*,\quad c=e^{a_*}=\rho_*.
	\end{equation}
	Then, the system  \eqref{Main2}-\eqref{in-data} can be reformulated to
	\begin{equation}\label{3-1}
		\left\{
		\begin{aligned}
			&\partial_t\phi+u\cdot\nabla \phi+\text{div}u=0,\\
			&\partial_tu+u\cdot\nabla u+u-v+\nabla \phi=0,\\
			&\partial_tv+v\cdot\nabla v+\nabla P=\Delta v+c(e^{\phi}-1)(u-v)+c(u-v),\\
			&\text{div}v=0,
		\end{aligned}
		\right.
	\end{equation}
	with the initial data 
	\begin{equation}\label{3-2}
		(\phi,u,v)|_{t=0}=(\phi_0,u_0,v_0),
	\end{equation}
	and far fields states 
	\begin{equation}\label{3-21}
		(\phi,u,v)\rightarrow (0,0,0),\quad \text{as}\quad |x|\rightarrow+\infty.
	\end{equation}
	
We establish the following local existence theorem of the classical solution of the system \eqref{3-1}-\eqref{3-21}, which can be proved similarly as that in \cite{MR564670} by using contraction mapping principle. Here we directly give the main result and omit the details of the proof.
	\begin{thm}(Local existence)\label{lem-loc}
		Assume $(\phi_0,u_0,v_0)\in H^s(\mathbb{R}^3)$ for an integer $s \geq 3$ and ${\rm{div}}v_0=0$, then there exists a short time $T_0>0$ such that the reformulated system \eqref{3-1}-\eqref{3-21} admits a unique classical solution $(\phi,u,v)$ satisfying
		\begin{equation}
			\begin{aligned}
				&\phi\in C([0,T_0],H^s(\mathbb{R}^3))\cap C^1([0,T_0],H^{s-1}(\mathbb{R}^3)),\\
				&u\in C([0,T_0],H^s(\mathbb{R}^3))\cap C^1([0,T_0],H^{s-1}(\mathbb{R}^3)),~\nabla u\in L^2([0,T_0],H^{s-1}(\mathbb{R}^3)),\\
				&v\in C([0,T_0],H^s(\mathbb{R}^3))\cap C^1([0,T_0],H^{s-2}(\mathbb{R}^3)),~\nabla v\in L^2([0,T_0],H^{s}(\mathbb{R}^3)).
			\end{aligned}
		\end{equation}
	\end{thm}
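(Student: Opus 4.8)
The plan is to construct the solution by a standard Friedrichs--Kato iteration that exploits the fact that, at the linearized level, system \eqref{3-1} splits into a symmetric hyperbolic system for $(\phi,u)$ and a parabolic (Stokes) system for $v$, the two being coupled only through zeroth-order terms. Set $(\phi^0,u^0,v^0)\equiv(\phi_0,u_0,v_0)$; given $(\phi^{n-1},u^{n-1},v^{n-1})$, first let $(\phi^n,u^n)$ solve the linear system
\begin{align*}
&\partial_t\phi^n+u^{n-1}\cdot\nabla\phi^n+{\rm div}\,u^n=0,\\
&\partial_tu^n+u^{n-1}\cdot\nabla u^n+u^n-v^{n-1}+\nabla\phi^n=0,
\end{align*}
with data $(\phi_0,u_0)$, then, applying the projection $\mathcal{J}={\rm I}+\nabla(-\Delta)^{-1}{\rm div}$ of \eqref{121101} onto divergence-free fields, let $v^n$ solve the linear parabolic problem
\begin{align*}
\partial_tv^n+\mathcal{J}(v^{n-1}\cdot\nabla v^n)+cv^n-\Delta v^n=c\,\mathcal{J}u^{n-1}+\mathcal{J}\big[c(e^{\phi^{n-1}}-1)(u^{n-1}-v^{n-1})\big],
\end{align*}
with data $v_0$, recovering $P^n$ from $({\rm I}-\mathcal{J})$ applied to the $v$-equation. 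Each of these linear problems is uniquely solvable in $C([0,T_0];H^s)$ with $\nabla u^n\in L^2(0,T_0;H^{s-1})$, $\nabla v^n\in L^2(0,T_0;H^s)$ by classical linear theory (energy estimates for the hyperbolic block, the heat semigroup for the parabolic one), and ${\rm div}\,v^n=0$ is propagated.

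\emph{Step 1: uniform bounds on a fixed interval.} By induction we show there exist $M>0$ and $T_0>0$, depending only on $\|(\phi_0,u_0,v_0)\|_{H^s}$, such that
\begin{equation*}
\sup_{[0,T_0]}\|(\phi^n,u^n,v^n)(t)\|_{H^s}^2+\int_0^{T_0}\big(\|\nabla u^n\|_{H^{s-1}}^2+\|\nabla v^n\|_{H^s}^2\big)\,dt\le M.
\end{equation*}
For $(\phi^n,u^n)$, apply $\nabla^\alpha$ for $|\alpha|\le s$, pair with $(\nabla^\alpha\phi^n,\nabla^\alpha u^n)$, and use: the cancellation of the top-order coupling since $(\phi,u)\mapsto({\rm div}\,u,\nabla\phi)$ is skew-adjoint; the commutator estimate of Lemma~\ref{lema2} together with $H^s\hookrightarrow W^{1,\infty}$ (as $s\ge3$) to bound the transport terms by $\lesssim\|u^{n-1}\|_{H^s}\|(\phi^n,u^n)\|_{H^s}^2$; the favorable sign of the damping $+u^n$, which supplies the $\nabla u$-dissipation; and Young's inequality for the $-v^{n-1}$ term. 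For $v^n$, the $H^s$ energy estimate gives $\frac{d}{dt}\|v^n\|_{H^s}^2+\|\nabla v^n\|_{H^s}^2\lesssim(1+\|v^{n-1}\|_{H^s})\|v^n\|_{H^s}^2+\|u^{n-1}\|_{H^s}^2+\|(e^{\phi^{n-1}}-1)(u^{n-1}-v^{n-1})\|_{H^s}^2$, where the last term is controlled via the Moser-type composition estimate $\|e^{\phi}-1\|_{H^s}\le C(\|\phi\|_{L^\infty})\|\phi\|_{H^s}$, valid because $\|\phi^{n-1}\|_{L^\infty}$ is already under control. A Gronwall argument then closes the induction once $T_0$ is chosen small depending on $M$.

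\emph{Step 2: contraction, limit, and uniqueness.} Writing $\delta f^n=f^n-f^{n-1}$, the differences satisfy the same linear systems with right-hand sides of the schematic form (uniformly $H^s$-bounded iterate)$\times(\delta$-quantity$)$, e.g. $\partial_t\delta\phi^n+u^{n-1}\cdot\nabla\delta\phi^n+{\rm div}\,\delta u^n=-\delta u^{n-1}\cdot\nabla\phi^{n-1}$, and similarly for $\delta u^n$ and $\delta v^n$ (the latter also carrying the $-\Delta\delta v^n$ dissipation and a source difference involving $c(e^{\phi^{n-1}}-e^{\phi^{n-2}})(\cdots)$); no derivative is lost in $L^2$ because $\nabla\phi^{n-1},\nabla u^{n-1}\in L^\infty$ uniformly and $e^{\phi^{n-1}}-e^{\phi^{n-2}}$ is $L^2$-bounded by $\delta\phi^{n-1}$. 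A plain $L^2$ energy estimate, using the uniform bounds for the coefficients and shrinking $T_0$ if necessary, then gives $\sup_{[0,T_0]}\|(\delta\phi^{n+1},\delta u^{n+1},\delta v^{n+1})\|_{L^2}\le\frac12\sup_{[0,T_0]}\|(\delta\phi^n,\delta u^n,\delta v^n)\|_{L^2}$, so $\{(\phi^n,u^n,v^n)\}$ is Cauchy in $C([0,T_0];L^2)$; interpolating with the uniform $H^s$ bound upgrades this to convergence in $C([0,T_0];H^{s'})$ for every $s'<s$, which suffices to pass to the limit in \eqref{3-1}. The limit lies in $L^\infty([0,T_0];H^s)$ by weak-$*$ lower semicontinuity; strong time-continuity into $H^s$ is recovered in the usual way (continuity of $t\mapsto\|(\phi,u,v)(t)\|_{H^s}$ from the energy identity, or a Bona--Smith regularization), while $\nabla u\in L^2_tH^{s-1}$ and $\nabla v\in L^2_tH^s$ follow from the dissipative terms; the $C^1$-in-time statements follow by reading $\partial_t\phi,\partial_tu$ off $\eqref{3-1}_{1,2}$ (values in $H^{s-1}$) and $\partial_tv$ off $\eqref{3-1}_3$ (values in $H^{s-2}$, the loss coming from $\Delta v$). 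Uniqueness follows from the same $L^2$ estimate applied to the difference of two solutions together with Gronwall's inequality.

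The main obstacle I anticipate is obtaining the iteration-independent existence time $T_0$ and closing the nonlinear energy estimate in the presence of the genuinely hyperbolic $(\phi,u)$-block, where no parabolic smoothing is available: the argument must rest entirely on the symmetrizable structure, the commutator and Moser inequalities, and the careful bookkeeping that makes the contraction hold in a norm one order below the a priori bound. Treating the pressure through the projection $\mathcal{J}$ and the composition nonlinearity $e^{\phi}-1$ in $H^s$ are the remaining, and more routine, technical points.
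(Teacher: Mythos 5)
Your Friedrichs--Kato iteration is precisely the realization of the contraction-mapping argument that the paper itself invokes --- it states only that the theorem ``can be proved similarly as that in Matsumura--Nishida by using contraction mapping principle'' and omits all details. Your sketch (hyperbolic/parabolic split, skew-adjoint cancellation, commutator and Moser estimates, contraction one derivative down followed by interpolation, Bona--Smith or energy-identity recovery of strong $H^s$-continuity) supplies the standard details the authors deliberately left out, and I see no gap in it.
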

	
	Then, we intend to establish uniform estimates for extending the local-in-time classical solution to a global one.  Therefore, we provide the \emph{a priori} assumption for any given time $T>0$ 
	\begin{equation}\label{a priori est}
		\sup_{0< t\leq T}\|(\phi,u,v)(t)\|_{H^s}\leq \delta,
	\end{equation}
	where $s\geq 3$ is an integer and $\delta>0$ is a sufficiently small constant.
	
	
	\subsection{Time-independent energy estimates} 
	\label{sub:a_priori_estimates}
	\hspace{2em}In this subsection, we plan to establish the energy estimates for the  reformulated system \eqref{3-1}-\eqref{3-21}.
	\begin{lem}\label{lem-n}
		Let $T$ be any given positive constant. Assume the conditions in Theorem \ref{thm1} hold. Let $(\phi,u,v)$ be the classical solution of the system \eqref{3-1}-\eqref{3-21} satisfying the a priori assumption \eqref{a priori est}, then it holds for $0<t\leq T$ and $s\geq 3$,
		\begin{equation}
			\begin{aligned}\label{lem-n1}
				&\frac{1}{2}\frac{d}{dt}(\|\phi\|_{H^s}^2+\| u\|_{H^s}^2+\frac{1}{c}\|v\|_{H^s}^2)
				+\|u-v\|_{H^s}^2+\frac{1}{c}\|\nabla v\|_{H^s}^2\\
				&\leq C\delta (\|\nabla \phi\|_{H^{s-1}}^2+\| \nabla u\|_{H^{s-1}}^2+
				\|u-v\|_{H^{s}}^2+\|\nabla v\|_{H^{s-1}}^2),
			\end{aligned}
		\end{equation}
	where $C$ is a positive constant independent of time.	
	\end{lem}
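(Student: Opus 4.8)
The plan is to apply $\nabla^k$ for $0\le k\le s$ to each equation of the reformulated system \eqref{3-1}, take the $L^2$ inner product with $\nabla^k\phi$, $\nabla^k u$, and $\tfrac1c\nabla^k v$ respectively, sum in $k$, and add the three identities. First I would handle the linear structure: the coupling terms $-v$ in $\eqref{3-1}_2$ and $c(u-v)$ in $\eqref{3-1}_3$ combine, after the weight $\tfrac1c$ on the $v$-equation, to produce the nonnegative damping $\|u-v\|_{H^s}^2$; the viscous term $-\Delta v$ gives $\tfrac1c\|\nabla v\|_{H^s}^2$ after integration by parts; and the pressure term $\nabla P$ drops out entirely because $\mathrm{div}\,v=0$, so $\int \nabla^k\nabla P\cdot\nabla^k v\,dx=0$. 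Crucially, the transport-like linear terms $\nabla^k\,\mathrm{div}\,u$ paired with $\nabla^k\phi$ and $\nabla^k\nabla\phi$ paired with $\nabla^k u$ cancel against each other after one integration by parts. This yields exactly the left-hand side of \eqref{lem-n1}; everything else must be absorbed into the right-hand side.

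Next I would estimate the nonlinear contributions. These are of four types: (i) the convection terms $u\cdot\nabla\phi$, $u\cdot\nabla u$, $v\cdot\nabla v$; (ii) the commutator terms arising when $\nabla^k$ hits a product, handled by Lemma \ref{lema2} (the Kato--Ponce commutator estimate); (iii) for the $k=0$ level, direct products estimated by Hölder and Gagliardo--Nirenberg (Lemma \ref{lema3}); and (iv) the extra source term $c(e^\phi-1)(u-v)$ in $\eqref{3-1}_3$. For a representative term like $\int \nabla^k(u\cdot\nabla u)\cdot\nabla^k u\,dx$, I would write it as $\int [\nabla^k,u\cdot\nabla]u\cdot\nabla^k u\,dx + \int (u\cdot\nabla)\nabla^k u\cdot\nabla^k u\,dx$; the second integral vanishes since $\int (u\cdot\nabla)\tfrac12|\nabla^k u|^2\,dx=-\tfrac12\int(\mathrm{div}\,u)|\nabla^k u|^2\,dx$ and is therefore bounded by $\|\nabla u\|_{L^\infty}\|\nabla^k u\|_{L^2}^2\lesssim\delta\|\nabla^k u\|_{L^2}^2$ using Sobolev embedding and the a priori bound \eqref{a priori est}; the commutator is controlled by Lemma \ref{lema2} as $\|\nabla u\|_{L^\infty}\|\nabla^k u\|_{L^2}+\|\nabla^k u\|_{L^2}\|\nabla u\|_{L^\infty}$, again $\lesssim\delta\|\nabla u\|_{H^{s-1}}\|\nabla^k u\|_{L^2}$. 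The term $\int\nabla^k(u\cdot\nabla\phi)\cdot\nabla^k\phi\,dx$ is handled the same way, with the slight subtlety that one derivative must stay on $\phi$, so the commutator bound produces $\|\nabla u\|_{H^{s-1}}\|\nabla\phi\|_{H^{s-1}}$-type quantities; this is why the right side of \eqref{lem-n1} contains $\|\nabla\phi\|_{H^{s-1}}^2$ and $\|\nabla u\|_{H^{s-1}}^2$ rather than the full $H^s$ norms. For the term $c(e^\phi-1)(u-v)$ I would use that $e^\phi-1$ is, by the a priori smallness, comparable to $\phi$ with all Sobolev norms controlled, and the composition estimate for $\nabla^k(e^\phi-1)$ together with Lemma \ref{lema2} gives a bound of the form $C\delta\|(u-v)\|_{H^s}\cdot(\text{first-order norms})$, which fits into the right-hand side; note the factor $u-v$ supplies the damping norm on the right.

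The main obstacle is bookkeeping rather than any single hard estimate: one must verify that no term on the right genuinely needs $\|\phi\|_{L^2}$ or $\|u\|_{L^2}$ or $\|v\|_{L^2}$ (undifferentiated lowest-order norms), since those do not appear on the right of \eqref{lem-n1} and cannot be absorbed by the left-hand dissipation. This works out precisely because every nonlinearity is quadratic or higher and each such term, after the integration-by-parts manipulations above, carries at least one derivative on \emph{each} of its two factors at the $k=0$ level (e.g. $\int u\cdot\nabla\phi\,\phi\,dx = -\tfrac12\int(\mathrm{div}\,u)\phi^2\,dx$ has a derivative on $u$ and is bounded by $\|\nabla u\|_{L^\infty}\|\phi\|_{L^2}^2$; here the unavoidable $\|\phi\|_{L^2}^2$ is harmless only because its coefficient $\|\nabla u\|_{L^\infty}\lesssim\delta$ and—more to the point—one re-examines and sees that the cleanest route bounds it instead by $\|\nabla u\|_{H^{s-1}}\|\phi\|_{L^2}\|\nabla\phi\|_{L^\infty}$-type products after a further integration by parts, keeping the structure $\delta\cdot\|\nabla\phi\|_{H^{s-1}}^2$). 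Once all terms are sorted, collecting the coefficients and using $\|(\phi,u,v)\|_{H^s}\le\delta$ to pull out the common small factor $C\delta$ delivers \eqref{lem-n1}. I would also note that the constant $C$ is independent of $T$ because it depends only on Sobolev embedding constants and $c=\rho_*$, not on the solution.
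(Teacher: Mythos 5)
Your overall strategy matches the paper's: apply $\nabla^k$ for $0\le k\le s$, take weighted $L^2$ inner products with $\nabla^k\phi$, $\nabla^k u$, $\tfrac1c\nabla^k v$, exploit the cancellations among the linear terms (the $\mathrm{div}\,u$/$\nabla\phi$ pair, the $(u-v)$ damping built from the weight $\tfrac1c$, the pressure term vanishing since $\mathrm{div}\,v=0$), and control the nonlinearities via the commutator estimate of Lemma~\ref{lema2} together with H\"older/Sobolev. This is precisely how the paper proceeds, and the right-hand side in \eqref{lem-n1} is correctly driven by the fact that every nonlinear product ends up carrying at least one derivative on each factor.

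The one place your argument slips is the $k=0$ convection term $\int(u\cdot\nabla\phi)\phi\,dx$. After integrating by parts you get $-\tfrac12\int(\mathrm{div}\,u)\phi^2\,dx$, correctly note that $\|\nabla u\|_{L^\infty}\|\phi\|_{L^2}^2$ cannot be absorbed because $\|\phi\|_{L^2}$ is not a dissipative quantity, and then propose a bound ``$\|\nabla u\|_{H^{s-1}}\|\phi\|_{L^2}\|\nabla\phi\|_{L^\infty}$ after a further integration by parts.'' That replacement still contains the undifferentiated $\|\phi\|_{L^2}$ and does not arise from a legitimate H\"older split of either form of the integral. The clean fix, which is what the paper does, is a direct $(3,2,6)$ H\"older estimate combined with the Sobolev embedding $\|\phi\|_{L^6}\lesssim\|\nabla\phi\|_{L^2}$ in $\mathbb{R}^3$:
\begin{equation*}
\Big|\int_{\mathbb{R}^3}(u\cdot\nabla\phi)\phi\,dx\Big|\le\|u\|_{L^3}\|\nabla\phi\|_{L^2}\|\phi\|_{L^6}\lesssim\|u\|_{H^1}\|\nabla\phi\|_{L^2}^2\le C\delta\|\nabla\phi\|_{L^2}^2;
\end{equation*}
equivalently, from your integrated-by-parts form, $\|\mathrm{div}\,u\|_{L^2}\|\phi\|_{L^3}\|\phi\|_{L^6}\lesssim\delta\|\nabla u\|_{L^2}\|\nabla\phi\|_{L^2}$. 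The analogous terms $\int(u\cdot\nabla u)\cdot u$ and $\int(e^\phi-1)(u-v)\cdot v$ are treated the same way, and with that correction the remainder of your proposal goes through as written.
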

	\begin{proof}
		Multiplying $\eqref{3-1}_1$ by $\phi$, $\eqref{3-1}_2$ by $u$, and  $\eqref{3-1}_3$ by $\frac{1}{c}v$, respectively, integrating the resulting equations in $\mathbb{R}^3$, summing them up, we obtain 
		\begin{equation}\label{THZ6}
			\begin{aligned}
				&\frac{1}{2}\frac{d}{dt}(\|\phi\|_{L^2}^2+\|u\|_{L^2}^2+\frac{1}{c}\|v\|_{L^2}^2)+\|u-v\|_{L^2}^2+\frac{1}{c}\|\nabla v\|_{L^2}^2\\
				&=-\int_{{\mathbb{R}^3}}(u\cdot\nabla  \phi)\phi dx-\int_{\mathbb{R}^3}( u\cdot\nabla u)\cdot udx+\int_{\mathbb{R}^3}(e^\phi-1)(u-v)\cdot vdx.	
			\end{aligned}
		\end{equation}
		By H\"{o}lder's inequality, Sobolev's inequalities, and  \eqref{a priori est}, the right-hand side of \eqref{THZ6} can be bounded by
		$$
		\begin{aligned}
			&\Big|\int_{{\mathbb{R}^3}}(u\cdot\nabla  \phi)\phi dx\Big|+\Big|\int_{\mathbb{R}^3} (u\cdot \nabla u)\cdot udx\Big|+\Big|\int_{\mathbb{R}^3}(e^\phi-1)(u-v)\cdot vdx\Big|\\	
			&\leq  \|u\|_{L^3}\|\nabla\phi\|_{L^2}\| \phi\|_{L^6}+\|u\|_{L^6}\|\nabla u\|_{L^2}\| u\|_{L^3}+\|e^\phi-1\|_{L^3}\|u-v\|_{L^2}\|v\|_{L^6}\\
			&\leq C \|u\|_{H^1}\|\nabla \phi\|_{L^2}^2+C\|\nabla u\|_{L^2}^2\| u\|_{H^1}+C\|\phi\|_{H^1}\|u-v\|_{L^2}\|\nabla v\|_{L^2}\\
			&\leq C\delta(\|\nabla \phi\|_{L^2}^2+\|\nabla u\|_{L^2}^2+\|u-v\|_{L^2}^2+\|\nabla v\|_{L^2}^2).
		\end{aligned}
		$$
		Thus we have
		\begin{equation}\label{L21}
			\begin{aligned}
				&\frac{1}{2}\frac{d}{dt}(\|\phi\|_{L^2}^2+\|u\|_{L^2}^2+\frac{1}{c}\|v\|_{L^2}^2)+\|u-v\|_{L^2}^2+\frac{1}{c}\|\nabla v\|_{L^2}^2\\
				&\leq C\delta(\|\nabla \phi\|_{L^2}^2+\|\nabla u\|_{L^2}^2+\|u-v\|_{L^2}^2+\|\nabla v\|_{L^2}^2).
			\end{aligned}
		\end{equation}	
		For $1\leq k\leq s$, applying the operator $\nabla^k$ to $\eqref{3-1}_1$ , $\eqref{3-1}_2$ and $\eqref{3-1}_3$  , multiplying the resulting equations by $\nabla^k \phi$ , $\nabla^k u$ and $\frac{1}{c}\nabla^k v$ respectively, and integrating them in $\mathbb{R}^3$, we obtain
		\begin{equation}\label{lem-w2}
			\begin{aligned}
				&\frac{1}{2}\frac{d}{dt}(\|\nabla^k \phi \|_{L^2}^2+\|\nabla^k u\|_{L^2}^2+\frac{1}{c}\|\nabla^kv\|_{L^2}^2)
				+\|\nabla^k(u-v)\|_{L^2}^2+\frac{1}{c}\|\nabla\nabla^kv\|_{L^2}^2\\
				&=-\int_{\mathbb{R}^3}\nabla^k (u\cdot\nabla \phi)\cdot\nabla^k \phi dx-\int_{\mathbb{R}^3} \nabla^k(u\cdot\nabla u)\cdot \nabla^k udx\\
				&~~-\frac{1}{c}\int_{\mathbb{R}^3}\nabla^k(v\cdot\nabla v)\cdot\nabla^kvdx+\int_{\mathbb{R}^3}\nabla^k((e^{\phi}-1)(u-v))\cdot\nabla^kvdx.
			\end{aligned}
		\end{equation}
		By Lemma \ref{lema2}, the first term on the right-hand side of \eqref{lem-w2} is calculated as follows
		\begin{equation}\label{lem-n2-1}
			\begin{aligned}
				&\Big| \int_{\mathbb{R}^3}\nabla^k (u\cdot\nabla \phi)\cdot\nabla^k \phi dx\Big|\\
				&\leq \Big| \int_{\mathbb{R}^3} (\nabla^k(u\cdot\nabla \phi)-u\cdot\nabla^k\nabla \phi)\cdot\nabla^k \phi dx\Big|+\Big|\int_{\mathbb{R}^3} u\cdot\nabla\nabla^k \phi\cdot\nabla^k \phi dx\Big|\\
				&\leq\|[\nabla^k,u]\nabla\phi\|_{L^2}\|\nabla^k \phi\|_{L^2} +C\|\text{div} u\|_{L^\infty}\|\nabla^k \phi \|_{L^2}^2\\
				&\leq C(\|\nabla u\|_{L^\infty}\|\nabla^k \phi\|_{L^2}+\|\nabla \phi\|_{L^\infty}\|\nabla^k u\|_{L^2})\|\nabla^k \phi\|_{L^2}+C\|\text{div} u\|_{L^\infty}\|\nabla^k \phi \|_{L^2}^2\\
				&\leq C(\|\nabla^2u\|_{H^1}\|\nabla^k \phi\|_{L^2}+\|\nabla^2 \phi\|_{H^1}\|\nabla^k u\|_{L^2})\|\nabla^k \phi\|_{L^2}+C\|\nabla^2 u\|_{H^1}\|\nabla^k \phi\|_{L^2}^2\\
				&\leq C\delta(\|\nabla^k \phi\|_{L^2}^2+\|\nabla^k u\|_{L^2}^2).
			\end{aligned}
		\end{equation}
		Similarly, the second term on the right-hand side of \eqref{lem-w2} is estimated  as follows
		\begin{equation}\label{lem-w3}
			\begin{aligned}
				&\Big| \int_{\mathbb{R}^3}\nabla^k (u\cdot\nabla u)\cdot\nabla^k u dx\Big|\\
				&\leq \Big| \int_{\mathbb{R}^3} (\nabla^k(u\cdot\nabla u)-u\cdot\nabla^k\nabla u)\cdot\nabla^k udx\Big| +\Big| \int_{\mathbb{R}^3} u\cdot\nabla\nabla^k u\cdot\nabla^k udx\Big| \\
				&\leq \|[\nabla^k,u]\nabla u\|_{L^2}\|\nabla^k u\|_{L^2}
				+C\|\text{div} u\|_{L^\infty}\|\nabla^k u\|_{L^2}^2\\
				&\leq C\|\nabla u\|_{L^\infty}\|\nabla^k u\|_{L^2}^2
				+C\|\text{div} u\|_{L^\infty}\|\nabla^k u\|_{L^2}^2\\
				&\leq C\delta\|\nabla^k u\|_{L^2}^2.
			\end{aligned}
		\end{equation}
		We now turn to the estimate for the third term on the right-hand side of \eqref{lem-w2},
		\begin{equation}
			\begin{aligned}
				&\Big| \int_{\mathbb{R}^3}\nabla^k (v\cdot\nabla v)\cdot\nabla^k v dx\Big|\\
				&\leq \Big| \int_{\mathbb{R}^3} (\nabla^k(v\cdot\nabla v)-v\cdot\nabla^k\nabla v)\cdot\nabla^k vdx\Big| +\Big| \int_{\mathbb{R}^3} v\cdot\nabla\nabla^k v\cdot\nabla^k vdx\Big| \\
				&\leq \|[\nabla^k,v]\nabla v\|_{L^2}\|\nabla^k v\|_{L^2}
				+C\|\text{div} v\|_{L^\infty}\|\nabla^k v\|_{L^2}^2\\
				&\leq C\|\nabla v\|_{L^\infty}\|\nabla^k v\|_{L^2}^2\\
				&\leq C\delta\|\nabla^k v\|_{L^2}^2.
			\end{aligned}
		\end{equation}
		In terms of Lemma \ref{lema2}, the last term on the right-hand side of \eqref{lem-w2} is stated below
		\begin{equation}
			\begin{aligned}
				&\Big| \int_{\mathbb{R}^3}\nabla^k ( (e^{\phi}-1)(u-v))\cdot\nabla^k v dx\Big|\\
				&\leq \Big| \int_{\mathbb{R}^3} (\nabla^k(e^{\phi}-1)(u-v)-(e^{\phi}-1)\nabla^k(u-v))\cdot\nabla^k vdx\Big|\\
				&\quad+\Big| \int_{\mathbb{R}^3} (e^{\phi}-1)\nabla^k(u-v)\cdot\nabla^k vdx\Big| \\
				&\leq \|[\nabla^k, e^{\phi}-1](u-v)\|_{L^2}\|\nabla^kv\|_{L^2}
				+\|e^{\phi}-1\|_{L^\infty}\|\nabla^k (u-v)\|_{L^2}\|\nabla^kv\|_{L^2}\\
				&\leq C(\|\nabla (e^{\phi}-1)\|_{L^3}\|\nabla^{k-1}(u-v)\|_{L^6}
				+\|\nabla^k(e^{\phi}-1)\|_{L^2}\|u-v\|_{L^\infty})\|\nabla^kv\|_{L^2}\\
				&\quad+C\|\nabla\phi\|_{H^1}\|\nabla^k (u-v)\|_{L^2}\|\nabla^kv\|_{L^2}\\
				&\leq C\delta(\|\nabla^k \phi \|_{L^2}^2+\|\nabla^{k} (u-v)\|_{L^2}^2+
				\|\nabla^kv\|_{L^2}^2).
			\end{aligned}
		\end{equation}
		Therefore, we conclude from the above estimates to prove
		\begin{equation}\label{A1}
			\begin{aligned}
				&\frac{1}{2}\frac{d}{dt}(\|\nabla^k \phi \|_{L^2}^2+\|\nabla^k u\|_{L^2}^2+\frac{1}{c}\|\nabla^kv\|_{L^2}^2)
				+\|\nabla^k(u-v)\|_{L^2}^2+\frac{1}{c}\|\nabla\nabla^kv\|_{L^2}^2\\
				&\leq C\delta (\|\nabla^k \phi \|_{L^2}^2+\|\nabla^ku\|_{L^2}^2+\|\nabla^{k} (u-v)\|_{L^2}^2+\|\nabla^kv\|_{L^2}^2).
			\end{aligned}
		\end{equation}
		Summing $k$ from 1 to s and combining \eqref{L21} yield
		\begin{equation}
			\begin{aligned}
				&\frac{1}{2}\frac{d}{dt}(\|\phi\|_{H^s}^2+\| u\|_{H^s}^2+\frac{1}{c}\|v\|_{H^s}^2)
				+\|u-v\|_{H^s}^2+\frac{1}{c}\|\nabla v\|_{H^s}^2\\
				&\leq C\delta (\|\nabla \phi\|_{H^{s-1}}^2+\| \nabla u\|_{H^{s-1}}^2+
				\|u-v\|_{H^{s}}^2+\|\nabla v\|_{H^{s-1}}^2).
			\end{aligned}
		\end{equation}
		This completes the proof of the lemma.
	\end{proof}
	
	\begin{lem}\label{lem-nx}
			Let $T$ be any given positive constant. Assume the conditions in Theorem \ref{thm1} hold. Let $(\phi,u,v)$ be the classical solution of the system \eqref{3-1}-\eqref{3-21} satisfying the a priori assumption \eqref{a priori est}, then it holds for $0<t\leq T$ and $s\geq 3$,
		\begin{equation}\label{lem-nx1}
			\begin{aligned}
				&\frac{1}{2}\|\nabla \phi\|_{H^{s-1}}^2+\sum_{k=1}^s\frac{d}{dt}\int_{\mathbb{R}^3} \nabla^{k-1}u\cdot\nabla^k \phi dx\\
				&\leq C(\|\nabla u\|_{H^{s-1}}^2+\|u-v\|_{H^{s-1}}^2),
			\end{aligned}
		\end{equation}	
		where $C$ is a positive constant independent of time.
	\end{lem}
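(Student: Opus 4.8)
The plan is to recover the missing dissipation of $\nabla\phi$ by a cross estimate between the density equation $\eqref{3-1}_1$ and the momentum equation $\eqref{3-1}_2$, in which $\nabla\phi$ enters as a genuine first-order term. First I would fix an integer $k$ with $1\le k\le s$, apply $\nabla^{k-1}$ to $\eqref{3-1}_2$, take the $L^2(\mathbb{R}^3)$ inner product with $\nabla^k\phi$, and integrate over $\mathbb{R}^3$; pairing $\nabla^k\phi$ with $\nabla^{k-1}\nabla\phi$ produces the dissipative term $\|\nabla^k\phi\|_{L^2}^2$, while the time-derivative term is rewritten as
\begin{equation*}
\int_{\mathbb{R}^3}\partial_t\nabla^{k-1}u\cdot\nabla^k\phi\,dx=\frac{d}{dt}\int_{\mathbb{R}^3}\nabla^{k-1}u\cdot\nabla^k\phi\,dx-\int_{\mathbb{R}^3}\nabla^{k-1}u\cdot\nabla^k\partial_t\phi\,dx,
\end{equation*}
and in the last integral I substitute $\partial_t\phi=-\,\mathrm{div}\,u-u\cdot\nabla\phi$ from $\eqref{3-1}_1$. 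The linear piece $-\int\nabla^{k-1}u\cdot\nabla^k\mathrm{div}\,u\,dx$ is controlled, after one integration by parts balancing the order of the derivatives, by $C\|\nabla^k u\|_{L^2}^2$, and the linear term $\int\nabla^{k-1}(u-v)\cdot\nabla^k\phi\,dx$ coming from the damping is split via Young's inequality into $\eta\|\nabla^k\phi\|_{L^2}^2+C\|\nabla^{k-1}(u-v)\|_{L^2}^2$; here it is important to keep $u-v$ together, so that only $\|u-v\|_{H^{s-1}}$, and never $\|v\|$ alone, appears on the right-hand side.

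Next I would estimate the nonlinear remainder, which collects the contributions of $u\cdot\nabla u$ (from $\eqref{3-1}_2$) and of $u\cdot\nabla\phi$ (from $\partial_t\phi$), by $C\delta\big(\|\nabla\phi\|_{H^{s-1}}^2+\|\nabla u\|_{H^{s-1}}^2\big)$, using the commutator estimate of Lemma \ref{lema2}, the Gagliardo-Nirenberg inequality of Lemma \ref{lema3}, the Sobolev embeddings $H^1(\mathbb{R}^3)\hookrightarrow L^6(\mathbb{R}^3)$ and $H^2(\mathbb{R}^3)\hookrightarrow L^\infty(\mathbb{R}^3)$, and the a priori smallness \eqref{a priori est}. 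Two points require care. The factor $\nabla^k(u\cdot\nabla\phi)$ formally produces $u\cdot\nabla^{k+1}\phi$, which carries one derivative too many on $\phi$ (uncontrolled in $H^s$ when $k=s$); this is cured by one further integration by parts that moves the extra gradient onto $\nabla^{k-1}u\cdot u$, after which only $\phi$-derivatives of order at most $k$ remain. Moreover, in each resulting trilinear integral the $L^\infty$ norm must be placed on a low-order factor — on $u$, or on $\nabla^j\phi$ with $j\le s-2$, so that $\|\nabla^j\phi\|_{L^\infty}\lesssim\|\nabla\phi\|_{H^{s-1}}$ (using $s\ge3$) — so that $\nabla\phi$ always enters squared and is therefore absorbable, rather than linearly.

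Combining the above gives, for each $1\le k\le s$,
\begin{equation*}
\frac{d}{dt}\int_{\mathbb{R}^3}\nabla^{k-1}u\cdot\nabla^k\phi\,dx+\|\nabla^k\phi\|_{L^2}^2\le \eta\|\nabla^k\phi\|_{L^2}^2+C\|\nabla^k u\|_{L^2}^2+C\|\nabla^{k-1}(u-v)\|_{L^2}^2+C\delta\big(\|\nabla\phi\|_{H^{s-1}}^2+\|\nabla u\|_{H^{s-1}}^2\big).
\end{equation*}
Summing over $k=1,\dots,s$, then choosing $\eta$ and $\delta$ small enough that $\eta+sC\delta\le\tfrac{1}{2}$, absorbing $(\eta+sC\delta)\|\nabla\phi\|_{H^{s-1}}^2$ into the left-hand side, and bounding $C\delta\|\nabla u\|_{H^{s-1}}^2\le C\|\nabla u\|_{H^{s-1}}^2$, yields \eqref{lem-nx1}. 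I expect the main obstacle to be precisely the nonlinear bookkeeping described above: simultaneously eliminating the spurious top-order derivative of $\phi$ and arranging every $\nabla\phi$-contribution in the absorbable form $\delta\|\nabla\phi\|_{H^{s-1}}^2$, while preventing any $v$- or $\nabla v$-norm from leaking onto the right-hand side.
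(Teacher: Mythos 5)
Your proposal is correct and follows essentially the same route as the paper: apply $\nabla^{k-1}$ to the momentum equation, pair with $\nabla^k\phi$, rewrite the time-derivative term via $\partial_t\phi=-\operatorname{div}u-u\cdot\nabla\phi$, bound the linear pieces by Young's inequality keeping $u-v$ together, handle the nonlinear remainder by commutator estimates plus one integration by parts to avoid $\nabla^{k+1}\phi$, and sum over $k$ absorbing the $\eta+C\delta$ terms. The only (harmless) difference is that you make the top-order integration-by-parts issue and the Moser-type placement of $L^\infty$ norms explicit, whereas the paper handles them silently within its displays.
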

	\begin{proof}
		For $1\leq k\leq s$, applying the operator $\nabla^{k-1}$ to $\eqref{3-1}_2$, multiplying the resulting equation by $\nabla^{k} \phi$, and integrating them in $\mathbb{R}^3$, we obtain
		\begin{equation}\label{lem-nx2}
			\begin{aligned}
				&\|\nabla^k \phi\|_{L^2}^2+\frac{d}{dt}\int_{\mathbb{R}^3} \nabla^{k-1}u\cdot\nabla^k \phi dx\\
				&=-\int_{\mathbb{R}^3}\nabla^{k-1}u\cdot\nabla^k\text{div}udx-\int_{\mathbb{R}^3}\nabla^{k-1}(u-v)\cdot\nabla^k \phi  dx\\
				&\quad-\int_{\mathbb{R}^3}\nabla^k(u\cdot\nabla \phi)\cdot \nabla^{k-1}udx-\int_{\mathbb{R}^3}\nabla^{k-1}(u\cdot\nabla u)\cdot \nabla^k \phi dx.
			\end{aligned}
		\end{equation}
		The first and second terms on the right-hand side of \eqref{lem-nx2} are estimated as 
		$$
		\begin{aligned}
			&\Big|\int_{\mathbb{R}^3}\nabla^{k-1}u\cdot\nabla^k\text{div}udx\Big|+\Big|\int_{\mathbb{R}^3}\nabla^{k-1}(u-v)\cdot\nabla^k \phi ~dx\Big|\\
			&\leq \|\nabla^ku\|_{L^2}^2+\|\nabla^{k-1}(u-v)\|_{L^2}^2+\frac{1}{4}\|\nabla^k \phi \|_{L^2}^2.
		\end{aligned}
		$$
		By H\"{o}lder's inequality and Lemma \ref{lema2}, we apply integration by parts to estimate the third term,
		$$
		\begin{aligned}
			&	\Big|\int_{\mathbb{R}^3}\nabla^k(u\cdot\nabla \phi)\cdot \nabla^{k-1}udx\Big|\\
			&\leq \Big|\int_{\mathbb{R}^3}(\nabla^{k}(u\cdot\nabla \phi)-u\cdot\nabla^k\nabla \phi)\cdot \nabla^{k-1}udx\Big|+\Big|\int_{\mathbb{R}^3}
			(u\cdot \nabla^{k}\nabla\phi)\cdot \nabla^{k-1}u dx\Big|\\
			&\leq \|[\nabla^k,u]\nabla\phi\|_{L^2}\|\nabla^{k-1}u\|_{L^2}+\|u\|_{L^\infty}\|\nabla^k \phi \|_{L^2}\|\nabla^ku\|_{L^2}+\|\nabla u\|_{L^3}\|\nabla^k\phi\|_{L^2}\|\nabla^{k-1}u\|_{L^6}\\
			&\leq C(\|\nabla u\|_{L^\infty}\|\nabla^{k}\phi\|_{L^2}+\|\nabla^{k}u\|_{L^2}\|\nabla \phi\|_{L^\infty}) \|\nabla^{k-1}u\|_{L^2}+C\|\nabla u\|_{H^1}\|\nabla^k \phi \|_{L^2}\|\nabla^ku\|_{L^2}\\
			&\leq C\delta(\|\nabla^2 u\|_{H^1}\|\nabla^{k}\phi\|_{L^2}+\|\nabla^{k}u\|_{L^2}\|\nabla^2 \phi\|_{H^1})+C\delta\|\nabla^k \phi\|_{L^2}\|\nabla^ku\|_{L^2}\\
			&\leq C\delta(\|\nabla^k \phi \|_{L^2}^2+\|\nabla^{k}u\|_{L^2}^2
			+\|\nabla^2\phi\|_{H^1}^2+\|\nabla^2 u\|_{H^1}^2).
		\end{aligned}
		$$
		In a similar way, we also have 
		$$
		\Big|\int_{\mathbb{R}^3}\nabla^k \phi \cdot\nabla^{k-1}(u\cdot\nabla u)dx\Big|
		\leq  C\delta(\|\nabla^k \phi \|_{L^2}^2+\|\nabla^{k}u\|_{L^2}^2
		+\|\nabla^2\phi\|_{H^1}^2+\|\nabla^2 u\|_{H^1}^2).
		$$
		It then follows from the above estimates to show
		\begin{equation}
			\begin{aligned}
				&\|\nabla^k \phi\|_{L^2}^2+\frac{d}{dt}\int_{\mathbb{R}^3} \nabla^{k-1}u\cdot\nabla^k \phi dx\\
				&\leq \|\nabla^ku\|_{L^2}^2+\|\nabla^{k-1}(u-v)\|_{L^2}^2+\frac{1}{4}\|\nabla^k \phi \|_{L^2}^2\\
				&\quad+C\delta(\|\nabla^k \phi \|_{L^2}^2+\|\nabla^{k}u\|_{L^2}^2
				+\|\nabla^2\phi\|_{H^1}^2+\|\nabla^2 u\|_{H^1}^2).
			\end{aligned}
		\end{equation}
		For $s\geq 3$, summing $k$ from 1 to s and using the smallness of the $\delta$, we obtain 
		\begin{equation}
			\begin{aligned}
				&\frac{1}{2}\|\nabla \phi\|_{H^{s-1}}^2+\sum_{k=1}^s\frac{d}{dt}\int_{\mathbb{R}^3} \nabla^{k-1}u\cdot\nabla^k \phi dx\\
				&\leq C(\|\nabla u\|_{H^{s-1}}^2+\|u-v\|_{H^{s-1}}^2).
			\end{aligned}
		\end{equation}
		Therefore, we complete the proof of Lemma \ref{lem-nx}.
	\end{proof}
	
	We are now in a position to establish uniform estimates as follows.
	\begin{prop}\label{pro-est}
		Assume the conditions in Theorem \ref{thm1} hold. Let $(\phi,u,v)$ be the classical solution of the system \eqref{3-1}-\eqref{3-21} satisfying the a priori assumption \eqref{a priori est}, then it holds
		\begin{equation}\label{pro-est1}
			\|(\phi,u,v)(t)\|_{H^s}^2+\int_0^t\left(\|\nabla (\phi,u)(\tau)\|_{H^{s-1}}^2+\|\nabla v(\tau)\|_{H^{s}}^2\right)d\tau\leq C\varepsilon_0^2,
		\end{equation}	
		where $C$ is a positive constant independent of time and $\varepsilon_0$ is defined in \eqref{in-data-e0}.	
	\end{prop}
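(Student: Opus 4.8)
The plan is to close the standard continuity argument: combine the two energy identities established in Lemmas \ref{lem-n} and \ref{lem-nx} into a single Lyapunov functional whose time derivative controls the full dissipation, and then absorb the right-hand side using the smallness of $\delta$ and of the initial data. First I would form the combined energy
\[
\mathcal{E}(t)\triangleq \tfrac12\big(\|\phi\|_{H^s}^2+\|u\|_{H^s}^2+\tfrac1c\|v\|_{H^s}^2\big)+\kappa\sum_{k=1}^s\int_{\mathbb{R}^3}\nabla^{k-1}u\cdot\nabla^k\phi\,dx,
\]
where $\kappa>0$ is a small fixed constant to be chosen. Since $\big|\sum_{k=1}^s\int\nabla^{k-1}u\cdot\nabla^k\phi\,dx\big|\le C\|\nabla\phi\|_{H^{s-1}}\|u\|_{H^{s-1}}\le C\|(\phi,u)\|_{H^s}^2$, for $\kappa$ small enough $\mathcal{E}(t)\sim\|(\phi,u,v)(t)\|_{H^s}^2$.

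Next, multiply \eqref{lem-nx1} by $\kappa$ and add it to \eqref{lem-n1}. On the dissipation side the result is
\[
\frac{d}{dt}\mathcal{E}(t)+\|u-v\|_{H^s}^2+\tfrac1c\|\nabla v\|_{H^s}^2+\tfrac{\kappa}{2}\|\nabla\phi\|_{H^{s-1}}^2\le C\delta\big(\|\nabla\phi\|_{H^{s-1}}^2+\|\nabla u\|_{H^{s-1}}^2+\|u-v\|_{H^s}^2+\|\nabla v\|_{H^{s-1}}^2\big)+C\kappa\big(\|\nabla u\|_{H^{s-1}}^2+\|u-v\|_{H^{s-1}}^2\big).
\]
The key point is that the Euler part has \emph{no} dissipation in $u$ directly, so $\|\nabla u\|_{H^{s-1}}$ must be recovered from $\|u-v\|_{H^s}$ and $\|\nabla v\|_{H^s}$: indeed $\nabla u=\nabla(u-v)+\nabla v$, hence $\|\nabla u\|_{H^{s-1}}^2\le 2\|\nabla(u-v)\|_{H^{s-1}}^2+2\|\nabla v\|_{H^{s-1}}^2\le 2\|u-v\|_{H^s}^2+2\|\nabla v\|_{H^s}^2$. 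Substituting this into the inequality above, the $C\kappa\|\nabla u\|_{H^{s-1}}^2$ and $C\kappa\|u-v\|_{H^{s-1}}^2$ terms are bounded by $C\kappa(\|u-v\|_{H^s}^2+\|\nabla v\|_{H^s}^2)$, which—after first fixing $\kappa$ small relative to the absolute constants $1$ and $1/c$ on the dissipation side, and then choosing $\delta$ small relative to $\kappa$—can be absorbed into the left-hand side. This yields, for some $C_1>0$,
\[
\frac{d}{dt}\mathcal{E}(t)+C_1\big(\|\nabla\phi\|_{H^{s-1}}^2+\|\nabla u\|_{H^{s-1}}^2+\|u-v\|_{H^s}^2+\|\nabla v\|_{H^s}^2\big)\le 0.
\]

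Finally, integrating in time from $0$ to $t$ and using $\mathcal{E}(t)\sim\|(\phi,u,v)(t)\|_{H^s}^2$ together with $\mathcal{E}(0)\le C\|(\phi_0,u_0,v_0)\|_{H^s}^2\le C\varepsilon_0^2$ gives exactly \eqref{pro-est1}. To complete the continuity argument one notes that the a priori bound \eqref{pro-est1} improves the assumption \eqref{a priori est}: choosing $\varepsilon_0$ small enough that $C\varepsilon_0^2<(\delta/2)^2$, the standard bootstrap (local existence from Theorem \ref{lem-loc} plus this closed estimate) extends the solution globally and yields \eqref{main-est}. The main obstacle is the bookkeeping in the two-parameter smallness argument: $\kappa$ must be chosen small enough that the cross term $\kappa\sum\int\nabla^{k-1}u\cdot\nabla^k\phi$ is dominated by the positive energy \emph{and} that $C\kappa$ times the recovered $\|\nabla u\|_{H^{s-1}}^2$ (which only lives in the $u-v$ and $\nabla v$ dissipation) does not overwhelm those terms, and only afterwards may $\delta$ be sent to zero; getting this order of quantifiers right, and verifying that $\|\nabla u\|_{H^{s-1}}$ genuinely appears with a good sign after the substitution, is the delicate part, while everything else is routine.
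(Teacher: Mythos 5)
Your proposal is correct and follows essentially the same route as the paper: form the Lyapunov functional $\frac12(\|\phi\|_{H^s}^2+\|u\|_{H^s}^2+\frac1c\|v\|_{H^s}^2)+\gamma_1\sum_{k=1}^s\int\nabla^{k-1}u\cdot\nabla^k\phi\,dx$ by adding $\gamma_1\times\eqref{lem-nx1}$ to \eqref{lem-n1}, recover the missing dissipation in $u$ from $\|\nabla u\|_{H^{s-1}}^2\le 2\|u-v\|_{H^s}^2+2\|\nabla v\|_{H^s}^2$, fix the interpolation constant small first and then $\delta$, and integrate in time. Your write-up is in fact slightly more careful than the paper's on the quantifier ordering and on the constant in the $\nabla u$ recovery, but no new idea is introduced; the bootstrap closure you append belongs to the proof of Theorem \ref{thm1} rather than to this proposition, though that is a harmless addition.
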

	\begin{proof}
	Let $\gamma_1>0$ be a sufficiently small constant. Taking $(\ref{lem-n1} )+\gamma_1\times(\ref{lem-nx1} )$ yields
	\begin{equation}
		\begin{aligned}
			&\frac{d}{dt}\left\{\frac{1}{2}(\|\phi\|_{H^s}^2+\| u\|_{H^s}^2+\frac{1}{c}\|v\|_{H^s}^2)+\gamma_1\sum_{k=1}^s\int_{\mathbb{R}^3} \nabla^{k-1}u\cdot\nabla^k \phi dx \right\}\\
			&\quad +\|u-v\|_{H^s}^2+\frac{1}{c}\|\nabla v\|_{H^s}^2+\frac{1}{2}\gamma_1\|\nabla \phi\|_{H^{s-1}}^2\\
			&\leq C\delta (\|\nabla \phi\|_{H^{s-1}}^2+\| \nabla u\|_{H^{s-1}}^2+
			\|u-v\|_{H^{s}}^2+\|\nabla v\|_{H^{s-1}}^2)\\
			&\quad+C\gamma_1(\|\nabla u\|_{H^{s-1}}^2+\|u-v\|_{H^{s-1}}^2).
		\end{aligned}
	\end{equation}
	It is obviously from the smallness of $\gamma_1$ that 
	$$
	\frac{1}{2}(\|\phi\|_{H^s}^2+\| u\|_{H^s}^2+\frac{1}{c}\|v\|_{H^s}^2)+\gamma_1\sum_{k=1}^s\int_{\mathbb{R}^3} \nabla^{k-1}u\cdot\nabla^k \phi dx  \sim 
	\|(\phi,u,v)(t)\|_{H^s}^2.
	$$
	Due to the smallness $\delta$ and $\gamma_1$, there exists a positive constant $C$ such that
	\begin{equation}\label{At}
		\frac{d}{dt}\|(\phi,u,v)(t)\|_{H^s}^2
		+C(\|\nabla (\phi,u)(t)\|_{H^{s-1}}^2+\|\nabla v(t)\|_{H^s}^2)\leq 0,
	\end{equation}
	where we have used 
	$$
	\|(u-v)\|_{H^s}^2+\|\nabla v\|_{H^s}^2\geq\|\nabla u\|_{H^{s-1}}^2.
	$$
Integrating \eqref{At} with respect to time from $0$ to $t$
	\begin{equation}
		\|(\phi,u,v)(t)\|_{H^s}^2+C\int_0^t\left(\|\nabla (\phi,u)(\tau)\|_{H^{s-1}}^2+\|\nabla v(\tau)\|_{H^{s}}^2\right)d\tau
		\leq C\varepsilon_0^2.
	\end{equation}	
	This completes the proof of Proposition \ref{pro-est}.
	\end{proof}

	\section{Time decay estimates of the E-NS system}\label{Sec4}
\hspace{2em} In this section, we will investigate the large-time behavior of the E-NS system. More precisely, we first establish the time decay estimates of the linear system by using spectral analysis. Then, by Duhamel's principle, low-high frequency decomposition, and energy method, we derive the time decay estimates of the nonlinear system.
	\subsection{Time decay estimates of the linear equations}
\hspace{2em}	Firstly, we write the system \eqref{3-1}-\eqref{3-21} into the following form
	\begin{equation}\label{Main3}
		\left\{
		\begin{aligned}
			&\partial_t\phi +{\rm{div}}u=f_1, \\
			&\partial_tu+\nabla \phi+u-v=f_2,\\
			&\partial_tv+cv-c\mathcal{J}u-\Delta v=f_3,
		\end{aligned}
		\right.
	\end{equation}
	subject to the initial data
	\begin{equation}\label{in-data2}
		(\phi,u,v)|_{t=0}=(\phi_0,u_0,v_0),\quad \text{div}v_0=0,
	\end{equation}
	and far-field states
	\begin{equation}\label{3-221}
		(\phi,u,v)\rightarrow (0,0,0),\quad \text{as}\quad |x|\rightarrow+\infty.
	\end{equation}
	The linear operator $\mathcal{J}$ is given by 
	$$
	\mathcal{J}=\rm{I}+\nabla(-\Delta)^{-1}\text{div},
	$$
	and the nonlinear terms $f_1, f_2,f_3$ satisfy
	\begin{equation}\label{non-f}
		\begin{aligned}
			f_1=-u\cdot\nabla\phi,~f_2=-u\cdot\nabla u,~~f_3=-\mathcal{J}(v\cdot\nabla v)+\mathcal{J}(c(e^{\phi}-1)(u-v)).
		\end{aligned}
	\end{equation}
	Let $U=(\phi,u,v)^t$, $U_0=(\phi_0,u_0,v_0)^t,$ and $F=(f_1,f_2,f_3)^t$. Then  the system \eqref{Main3}-\eqref{in-data2} can be written as  
	\begin{equation}\label{linearized1}
		\left\{
		\begin{aligned}
			&\partial_tU+\mathcal{L}U=F,\\
			&U|_{t=0}=U_0,
		\end{aligned}
		\right.
	\end{equation}
	where the linear operator $\mathcal{L}$ is given by 
	\begin{equation}
		\mathcal{L}=\left(\begin{array}{ccc}
			0 & \rm{div} & 0  \\
			\nabla &  {\rm{I}}  & -{\rm{I}} \\
			0 &-c\mathcal{J}& c{\rm{I}} -\Delta \\
		\end{array}\right).
	\end{equation}
	In virtue of Duhamel's principle, the solution can be stated as follows 
	\begin{equation}\label{solutionofF}
		\begin{aligned}
			U(t,x)
			&=e^{-t\mathcal{L}}U_0+\int_0^t e^{-(t-\tau)\mathcal{L}}F(\tau)d\tau\\
			&=G*U_0+\int_0^tG(t-\tau)*F(\tau)d\tau.
		\end{aligned}
	\end{equation}
	$G(t,x)$ is the Green function  satisfying
	\begin{equation}
		\left\{
		\begin{aligned}
			&\partial_tG+\mathcal{L}G=0,\\
			&G(0,x)=\delta(x){\rm{I}},
		\end{aligned}
		\right.
	\end{equation}
	where $\delta(x)$ represents the standard Dirac delta function. Consider the linearized system of \eqref{Main3}-\eqref{3-221}
	\begin{equation}\label{Main4}
		\left\{
		\begin{aligned}
			&\partial_t\phi +{\rm{div}}u=0, \\
			&\partial_tu+\nabla \phi+u-v=0,\\
			&\partial_tv+cv-c\mathcal{J}u-\Delta v=0,
		\end{aligned}
		\right.
	\end{equation}
	with the initial data 
	\begin{equation}\label{Main4-1}
		(\phi,u,v)|_{t=0}=(\phi_0,u_0,v_0),
	\end{equation}
	and far-field states
	\begin{equation}\label{32}
		(\phi,u,v)\rightarrow (0,0,0),\quad \text{as}\quad |x|\rightarrow+\infty.
	\end{equation}
	Since $	\text{div}(\mathcal{J}u)=0$, taking div operator in $\eqref{Main4}_3$ yields
	$$
	\partial_t(\text{div}v)+c\text{div}v=\Delta \text{div}v.
	$$
	Solving this equation and noting $\text{div}v_0=0$, we get the following fact for linear system 
	$$
	\text{div}v=0.
	$$
	Therefore, we decompose the solution into two parts
	\begin{equation}\label{UT}
		u(t,x)=-\Lambda^{-1}\nabla \Lambda^{-1}\text{div}u+\Lambda^{-1}\text{curl}(\Lambda^{-1}\text{curl}u),\quad v(t,x)=\Lambda^{-1}\text{curl}(\Lambda^{-1}\text{curl}v),
	\end{equation}
	where $\Lambda^{-1}$ is the pseudodifferential operator defined as \eqref{Lambda}.
	
	Denote 
	\begin{equation}
		w=w(t,x)=\Lambda^{-1}\text{div}u,~~\Psi=\Psi(t,x)=\Lambda^{-1}\text{curl} ~u,\quad\varphi=\varphi(t,x)=\Lambda^{-1}\text{curl}~v,
	\end{equation} 
	and 
	\begin{equation}
		w_0=\Lambda^{-1}\text{div}u_0,~~\Psi_0=\Lambda^{-1}\text{curl} ~u_0,\quad\varphi_0=\Lambda^{-1}\text{curl}~v_0.
	\end{equation} 
	From above, we intend to decompose the linear system \eqref{Main4} into two parts: the divergence-free part and the curl-free part:
	\begin{equation}\label{I}
		{\rm{(I)}}	\left\{
		\begin{aligned}
			&\partial_t\phi +\Lambda w=0, \\
			&\partial_tw-\Lambda \phi+w=0,\\
			&(\phi,w)|_{t=0}=(\phi_0,w_0),
		\end{aligned}
		\right.
	\end{equation}
	and
	\begin{equation}\label{II}
		{\rm{(II)}}		\left\{
		\begin{aligned}
			&\partial_t\Psi +\Psi-\varphi=0, \\
			&\partial_t\varphi+c\varphi-c\Psi+\Lambda^2 \varphi=0,\\
			&(\Psi,\varphi)|_{t=0}=(\Psi_0,\varphi_0).
		\end{aligned}
		\right.
	\end{equation}
	Let $U_1=(\phi,w)^t$ and  $U_{10}=(\phi_0,w_0)^t$. We write the system ${\rm{(I)}}$ into the vector form.
	\begin{equation}\label{linear1}
		\left\{
		\begin{aligned}
			&\partial_tU_1+\mathcal{L}_1U_1=0,\\
			&U_1|_{t=0}=U_{10},
		\end{aligned}
		\right.
	\end{equation}
	where the operator $\mathcal{L}_1$ is introduced as 
	\begin{equation}
		\mathcal{L}_1=\left(\begin{array}{cc}
			0 & \Lambda   \\
			-\Lambda & 1 \\
		\end{array}\right).
	\end{equation}
	Let $G_1(t,x)$ be the Green function of \eqref{linear1}. After taking Fourier transform in $x$, we have
	$$
	\partial_t\hat{G}_1(t,\xi)+B_1\hat{G}_1(t,\xi)=0,\quad \hat{G}_1(0,\xi)={\rm{I}},
	$$
	where $B_1=\mathscr{F}[\mathcal{L}_1]$ is denoted by
	\begin{equation}
		B_1=\left(\begin{array}{cc}
			0 & |\xi| \\
			-|\xi| &1\\
		\end{array}\right).
	\end{equation}
	The eigenvalues of $B_1$ are computed from the determinant
	\begin{equation}
		\det{(\lambda {\rm{I}}+B_1)}=\lambda^2+\lambda+|\xi|^2=0.
	\end{equation}
	A simple computation gives
	\begin{equation}
		\lambda_1=\frac{-1+\sqrt{1-4|\xi|^2}}{2}, \quad  \lambda_2=\frac{-1-\sqrt{1-4|\xi|^2}}{2}.
	\end{equation}
	It is easy to verify that $\hat{G}_1(t,\xi)$ can be expressed as
	\begin{equation}
		\hat{G}_1(t,\xi)=\sum_{k=1}^2e^{\lambda_kt}Q_k(\xi),
	\end{equation}
	where the project operators $Q_{k}(\xi)$ satisfies
	\begin{equation}
		Q_k(\xi)=\prod_{j\neq k}\frac{-B_1-\lambda_{j}{\rm{I}}}{\lambda_k-\lambda_j}.
	\end{equation}
	After a tedious calculation, we find that 
	\begin{equation}
		\hat{G}_1(t,\xi)=
		\left(\begin{array}{cc}
			\frac{\lambda_1e^{\lambda_2t}-\lambda_2e^{\lambda_1t}}{\lambda_1-\lambda_2} & \frac{e^{\lambda_2t}-e^{\lambda_1t}}{\lambda_1-\lambda_2} |\xi| \\
			\frac{e^{\lambda_1t}-e^{\lambda_2t}}{\lambda_1-\lambda_2}|\xi| 
			&\frac{\lambda_1e^{\lambda_1t}-\lambda_2e^{\lambda_2t}}{\lambda_1-\lambda_2}
		\end{array}\right).
	\end{equation}
	Thus, the explicit formulas of $\hat{\phi}$ and $\hat{w}$ are stated as 
	\begin{equation}\label{THZ2}
		\begin{aligned}
			&\hat{\phi}(t,\xi)=\frac{\lambda_1e^{\lambda_2t}-\lambda_2e^{\lambda_1t}}{\lambda_1-\lambda_2}	\hat{\phi}_0(\xi)+\frac{(e^{\lambda_2t}-e^{\lambda_1t})|\xi|}{\lambda_1-\lambda_2}\hat{w}_0(\xi),\\
			&\hat{w}(t,\xi)=\frac{(e^{\lambda_1t}-e^{\lambda_2t})|\xi|}{\lambda_1-\lambda_2} 	\hat{\phi}_0(\xi)+\frac{\lambda_1e^{\lambda_1t}-\lambda_2e^{\lambda_2t}}{\lambda_1-\lambda_2}\hat{w}_0(\xi).
		\end{aligned}	
	\end{equation}
	Let $U_2=(\Psi,\varphi)^t$ and $U_{20}=(\Psi_0,\varphi_0)^t$. Similarly, the system ${\rm{(II)}}$ can be written into the vector form
	\begin{equation}\label{TTU1}
		\left\{
		\begin{aligned}
			&\partial_tU_2+\mathcal{L}_2U_2=0,\\
			&U_2|_{t=0}=U_{20},
		\end{aligned}
		\right.
	\end{equation}
	where the operator $\mathcal{L}_2$ is given by
	\begin{equation}
		\mathcal{L}_2=\left(\begin{array}{cccccc}
			{\rm{I}}&- 	{\rm{I}}\\
			-c	{\rm{I}}& (c+\Lambda^2){\rm{I}}\\
		\end{array}\right).
	\end{equation}
	Let $G_2(t,x)$ be Green function of \eqref{TTU1}. After taking Fourier transform in $x$, we have
	$$
	\partial_t\hat{G}_2(t,\xi)+B_2\hat{G}_2(t,\xi)=0,\quad \hat{G}_2(0,\xi)={\rm{I}},
	$$
	where $B_2=\mathscr{F}[\mathcal{L}_2]$ is given by
	\begin{equation}
		B_2=\left(\begin{array}{cccccc}
			{\rm{I}}&- 	{\rm{I}}\\
			-c	{\rm{I}}& (c+|\xi|^2){\rm{I}}\\
		\end{array}\right).
	\end{equation}
	The eigenvalues of $B_2$ are computed from the determinant
	\begin{equation}
		\det{(\lambda {\rm{I}}+B_2)}=(\lambda^2+(c+1+|\xi|^2)\lambda+|\xi|^2)^3=0,
	\end{equation}
	which gives 
	$$
	\begin{aligned}
		&\lambda_3=\frac{-(c+1+|\xi|^2)+\sqrt{(c+1+|\xi|^2)^2-4|\xi|^2}}{2} ~~~~(triple),\\
		&\lambda_4=\frac{-(c+1+|\xi|^2)-\sqrt{(c+1+|\xi|^2)^2-4|\xi|^2}}{2}~~~~(triple).
	\end{aligned}
	$$
	Thus, $\hat{G}_2(t,\xi)$ can be solved as
	\begin{equation}
		\hat{G}_2(t,\xi)=\sum_{k=3}^4e^{\lambda_kt}Z_k(\xi),
	\end{equation}
	where the project operators $Z_{k}(\xi)$ satisfies
	\begin{equation}
		Z_k(\xi)=\prod_{j\neq k}\frac{-B_2-\lambda_{j}{\rm{I}}}{\lambda_k-\lambda_j}.
	\end{equation}
	After a tedious calculation, it is shown that 
	\begin{equation}
		\hat{G}_2(t,\xi)=
		\left(\begin{array}{cc}
			\frac{(\lambda_3+1)e^{\lambda_4t}-(\lambda_4+1)e^{\lambda_3t}}{\lambda_3-\lambda_4}{\rm{I}}& \frac{e^{\lambda_3t}-e^{\lambda_4t}}{\lambda_3-\lambda_4} {\rm{I}}\\
			\frac{c(e^{\lambda_3t}-e^{\lambda_4t})}{\lambda_3-\lambda_4}{\rm{I}}
			&\frac{(\lambda_3+1)e^{\lambda_3t}-(\lambda_4+1)e^{\lambda_4t}}{\lambda_3-\lambda_4}{\rm{I}}
		\end{array}\right).
	\end{equation}
	Then, the solution is stated below
	\begin{equation}\label{THZ3}
		\begin{aligned}
			& \hat{\Psi}(t,\xi)=\frac{(\lambda_3+1)e^{\lambda_4t}-(\lambda_4+1)e^{\lambda_3t}}{\lambda_3-\lambda_4}\hat{\Psi}_0+\frac{e^{\lambda_3t}-e^{\lambda_4t}}{\lambda_3-\lambda_4}\hat{\varphi}_0,\\
			&\hat{\varphi}(t,\xi)=\frac{c(e^{\lambda_3t}-e^{\lambda_4t})}{\lambda_3-\lambda_4}\hat{\Psi}_0+\frac{(\lambda_3+1)e^{\lambda_3t}-(\lambda_4+1)e^{\lambda_4t}}{\lambda_3-\lambda_4}\hat{\varphi}_0.	
		\end{aligned}	
	\end{equation}
	In terms of \eqref{UT}, \eqref{THZ2}, and \eqref{THZ3}, the solution of $\hat{\phi},\hat{u},\hat{v}$ can be stated as follows:
	\begin{equation}\label{LsU}
		\begin{aligned}
			&\hat{\phi}(t,\xi)=\frac{\lambda_1e^{\lambda_2t}-\lambda_2e^{\lambda_1t}}{\lambda_1-\lambda_2}	\hat{\phi}_0(\xi)-\frac{e^{\lambda_1t}-e^{\lambda_2t}}{\lambda_1-\lambda_2}i\xi\cdot\hat{u}_0(\xi),\\		
			&\hat{u}(t,\xi)=-\frac{e^{\lambda_1t}-e^{\lambda_2t}}{\lambda_1-\lambda_2}i\xi\cdot\hat{\phi}_0(\xi)+\frac{\lambda_1e^{\lambda_1t}-\lambda_2e^{\lambda_2t}}{\lambda_1-\lambda_2}\frac{\xi\xi^t}{|\xi|^2}\hat{u}_0(\xi)\\
			&\quad+\frac{(\lambda_3+1)e^{\lambda_4t}-(\lambda_4+1)e^{\lambda_3t}}{\lambda_3-\lambda_4}\Big({\rm{I}}-\frac{\xi\xi^t}{|\xi|^2}\Big)\hat{u}_0(\xi)+
			\frac{e^{\lambda_3t}-e^{\lambda_4t}}{\lambda_3-\lambda_4}\hat{v}_0(\xi),\\		
			&\hat{v}(t,\xi)=\frac{c(e^{\lambda_3t}-e^{\lambda_4t})}{\lambda_3-\lambda_4}\Big({\rm{I}}-\frac{\xi\xi^t}{|\xi|^2}\Big)\hat{u}_0(\xi)+\frac{(\lambda_3+1)e^{\lambda_3t}-(\lambda_4+1)e^{\lambda_4t}}{\lambda_3-\lambda_4}\hat{v}_0(\xi).
		\end{aligned}
	\end{equation}
	Consequently, the Fourier transform of Green function $G(t,x)=(G_{ik})_{3\times 3}$ is calculated as 
	\begin{equation}\label{G1}
	\begin{aligned}
		&\hat{G}(t,\xi)=
		\left(\begin{array}{ccc}
			\hat{G}_{11} & \hat{G}_{12} &\hat{G}_{13}\\
			\hat{G}_{21} & \hat{G}_{22} &\hat{G}_{23}\\
			\hat{G}_{31} & \hat{G}_{32} &\hat{G}_{33}
		\end{array}\right),
	\end{aligned}
	\end{equation}
	where $\hat{G}_{ij}$ are defined as
		\begin{equation*}
       \begin{aligned}
		&	\hat{G}_{11}=
				\frac{\lambda_1e^{\lambda_2t}-\lambda_2e^{\lambda_1t}}{\lambda_1-\lambda_2},\quad
				\hat{G}_{12}=-i\xi^t \frac{e^{\lambda_1t}-e^{\lambda_2t}}{\lambda_1-\lambda_2},\quad
				\hat{G}_{13}=\hat{G}_{31}=0,\\
			&\hat{G}_{21}=	-i\xi \frac{e^{\lambda_1t}-e^{\lambda_2t}}{\lambda_1-\lambda_2},\quad\hat{G}_{22}= 
				\frac{(\lambda_3+1)e^{\lambda_4t}-(\lambda_4+1)e^{\lambda_3t}}{\lambda_3-\lambda_4}\Big({\rm{I}}-\frac{\xi\xi^t}{|\xi|^2}\Big)+\frac{\lambda_1e^{\lambda_1t}-\lambda_2e^{\lambda_2t}}{\lambda_1-\lambda_2}\frac{\xi\xi^t}{|\xi|^2},\\
		&	\hat{G}_{23}=\frac{e^{\lambda_3t}-e^{\lambda_4t}}{\lambda_3-\lambda_4},\quad	\hat{G}_{32}=\frac{c(e^{\lambda_3t}-e^{\lambda_4t})}{\lambda_3-\lambda_4}\Big({\rm{I}}-\frac{\xi\xi^t}{|\xi|^2}\Big),\quad\hat{G}_{33}=\frac{(\lambda_3+1)e^{\lambda_3t}-(\lambda_4+1)e^{\lambda_4t}}{\lambda_3-\lambda_4}.
		\end{aligned}
	\end{equation*}

	Firstly, by a simple calculation, we investigate the asymptotic behavior of the eigenvalues $\lambda_i (i=1,2,3,4)$ as follows: 
	\begin{lem}\label{THZ4}
		Assume $r_0$ is a sufficiently small positive constant. For low frequency part $|\xi|<r_0$, the eigenvalues satisfy
		\begin{equation}
			\begin{aligned}
				&\lambda_1=-|\xi|^2+O(|\xi|^4),\\
				&\lambda_2=-1+|\xi|^2+O(|\xi|^4),\\ &\lambda_3=-\frac{|\xi|^2}{c+1}+O(|\xi|^4),\\
				&\lambda_4=-(c+1)-\frac{c|\xi|^2}{c+1}+O(|\xi|^4).
			\end{aligned}
		\end{equation}
		For $|\xi|\geq r_0$, there exists a positive constant $R$ such that
		\begin{equation}
			{\rm{Re}}~(\lambda_1,\lambda_2,\lambda_3,\lambda_4)\leq -R.
		\end{equation}
	\end{lem}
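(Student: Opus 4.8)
The plan is to treat the two pairs $\{\lambda_1,\lambda_2\}$ and $\{\lambda_3,\lambda_4\}$ separately, exploiting that the first pair solves $\lambda^2+\lambda+|\xi|^2=0$ (the characteristic equation of $B_1=\mathscr F[\mathcal L_1]$) and the second pair solves $\lambda^2+(c+1+|\xi|^2)\lambda+|\xi|^2=0$ (that of $B_2=\mathscr F[\mathcal L_2]$), so that in both cases the roots are given by explicit radical formulas and the whole lemma reduces to root analysis of two quadratics.

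For the low-frequency asymptotics I would Taylor-expand those radicals. With $r_0<1/2$ one has $\sqrt{1-4|\xi|^2}=1-2|\xi|^2-2|\xi|^4+O(|\xi|^6)$, so $\lambda_{1,2}=\tfrac12\big(-1\pm\sqrt{1-4|\xi|^2}\big)$ gives $\lambda_1=-|\xi|^2+O(|\xi|^4)$ and $\lambda_2=-1+|\xi|^2+O(|\xi|^4)$. For the second pair, set $a=a(|\xi|)=c+1+|\xi|^2$ and factor $\sqrt{a^2-4|\xi|^2}=a\sqrt{1-4|\xi|^2/a^2}=a-\tfrac{2|\xi|^2}{a}+O(|\xi|^4)$ (valid since $a$ is bounded above and below on $|\xi|<r_0$); then $\lambda_3=\tfrac12\big(-a+\sqrt{a^2-4|\xi|^2}\big)=-\tfrac{|\xi|^2}{a}+O(|\xi|^4)$ and $\lambda_4=\tfrac12\big(-a-\sqrt{a^2-4|\xi|^2}\big)=-a+\tfrac{|\xi|^2}{a}+O(|\xi|^4)$. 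Expanding $\tfrac1a=\tfrac1{c+1}\big(1-\tfrac{|\xi|^2}{c+1}+O(|\xi|^4)\big)$ turns the former into $\lambda_3=-\tfrac{|\xi|^2}{c+1}+O(|\xi|^4)$, and since $-a+\tfrac{|\xi|^2}{a}=-(c+1)-|\xi|^2+\tfrac{|\xi|^2}{c+1}+O(|\xi|^4)=-(c+1)-\tfrac{c}{c+1}|\xi|^2+O(|\xi|^4)$, the latter becomes the claimed formula for $\lambda_4$. The only point requiring care is keeping enough terms that the $O(|\xi|^4)$ remainders are genuine.

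For the high-frequency claim I would exhibit explicit negative upper bounds valid for all $|\xi|\ge r_0$. For $\lambda_1,\lambda_2$: when $|\xi|>1/2$ the discriminant $1-4|\xi|^2$ is negative, so the roots are complex conjugates with ${\rm Re}\,\lambda_1={\rm Re}\,\lambda_2=-1/2$; when $r_0\le|\xi|\le1/2$ they are real with $\lambda_1\le\tfrac12\big(-1+\sqrt{1-4r_0^2}\big)<0$ and $\lambda_2\le-1/2$. For $\lambda_3,\lambda_4$: the discriminant factors as $\big(c+(|\xi|-1)^2\big)\big(c+(|\xi|+1)^2\big)>0$, so both roots are real and negative, with $\lambda_3+\lambda_4=-a$ and $\lambda_3\lambda_4=|\xi|^2$; hence $\lambda_4\le-a/2\le-(c+1)/2$, and because $|\lambda_4|\le a$ we get $|\lambda_3|=|\xi|^2/|\lambda_4|\ge|\xi|^2/a\ge r_0^2/(c+1+r_0^2)$, i.e. $\lambda_3\le-r_0^2/(c+1+r_0^2)$. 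Taking $R$ to be the minimum of $1/2$, $(c+1)/2$, $\tfrac12\big(1-\sqrt{1-4r_0^2}\big)$ and $r_0^2/(c+1+r_0^2)$ completes the argument.

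I do not anticipate a real obstacle here. The one subtlety is ensuring that $R$ is genuinely uniform over the entire range $|\xi|\ge r_0$, including $|\xi|\to\infty$; this is why I prefer the explicit bounds above over a softer continuity-and-compactness argument, which would leave the large-$|\xi|$ behavior implicit. It is also worth noting for later use that these $\lambda_i$ are exactly the exponents appearing in $\hat G_j(t,\xi)=\sum_k e^{\lambda_k t}(\cdots)$, so the estimate "${\rm Re}\,\lambda_i\le-R$" is precisely the exponential-decay input that the subsequent Green-function analysis will require.
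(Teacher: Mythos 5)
Your proposal is correct and is precisely the "simple calculation" that the paper invokes without displaying: expand the two explicit quadratic root formulas as Taylor series in $|\xi|^2$ for the low-frequency asymptotics, and then verify a uniform spectral gap for $|\xi|\ge r_0$ by elementary case analysis. The algebra all checks out—$\sqrt{1-4|\xi|^2}=1-2|\xi|^2-2|\xi|^4+O(|\xi|^6)$, the factorization of the second discriminant as $(c+(|\xi|-1)^2)(c+(|\xi|+1)^2)>0$, and the Vieta-based bound $|\lambda_3|=|\xi|^2/|\lambda_4|\ge|\xi|^2/(c+1+|\xi|^2)$—and your explicit choice of $R$ handles the non-compact range $|\xi|\ge r_0$ cleanly.
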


Then, we directly obtain the time decay rates of the Green function $G(t,x)$. 
	\begin{prop}\label{lemL1}
		For a given function $f(t,x)$, we have the following decay estimates of the Green function $G(t,x)=(G_{ik})_{3\times 3}$,
		\begin{equation}
			\|\nabla^k G*f\|_{L^2}\leq C(1+t)^{-\frac{3}{4}-\frac{k}{2}}\|f\|_{L^1}+Ce^{-Rt}\|\nabla^k f\|_{L^2}.
		\end{equation}
		In particular, for the low-frequency part $G^\ell(t,x)=\mathcal{K}_1G(t,x)=(G^\ell_{ik})_{3\times 3}$, it holds
		\begin{equation}\label{LowLG}
			\|\nabla^k G^\ell*f\|_{L^2}\leq C(1+t)^{-\frac{3}{4}-\frac{k}{2}}\|f\|_{L^1}.
		\end{equation}
	\end{prop}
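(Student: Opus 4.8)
The plan is to pass to the Fourier side and combine the explicit formula \eqref{G1} for $\hat G(t,\xi)$ with the spectral asymptotics of Lemma \ref{THZ4}, splitting the frequency space into the low-frequency region $\{|\xi|<r_0\}$ and the high-frequency region $\{|\xi|\ge r_0\}$. In both regions the point of departure is Plancherel's identity,
\begin{align*}
\|\nabla^k G*f\|_{L^2}^2=\int_{\mathbb{R}^3}|\xi|^{2k}\,\big|\hat G(t,\xi)\hat f(\xi)\big|^2\,d\xi,
\end{align*}
together with the observation that, since the matrices $\frac{\xi\xi^t}{|\xi|^2}$ and ${\rm{I}}-\frac{\xi\xi^t}{|\xi|^2}$ are orthogonal projections, the operator norm $|\hat G(t,\xi)|$ is bounded by a fixed constant times the largest of the moduli of the scalar functions of $(t,\xi)$ that make up the entries $\hat G_{ij}$ in \eqref{G1} (the off-diagonal blocks $\hat G_{12},\hat G_{21}$ carrying an extra factor $|\xi|$).

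First I would treat the low-frequency part. On $\{|\xi|<r_0\}$, after shrinking $r_0$, Lemma \ref{THZ4} gives $\lambda_1-\lambda_2\sim 1$ and $\lambda_3-\lambda_4\sim c+1$, so every denominator in \eqref{G1} is bounded away from zero, while $|e^{\lambda_1t}|\le e^{-c_0|\xi|^2t}$, $|e^{\lambda_3t}|\le e^{-c_0|\xi|^2t}$ and $|e^{\lambda_2t}|,|e^{\lambda_4t}|\le e^{-c_1t}\le Ce^{-c_0|\xi|^2t}$ for suitable $c_0,c_1>0$ (the last inequality using $|\xi|<r_0$). Inserting these into each entry of \eqref{G1} gives the pointwise bound $|\hat G(t,\xi)|\le Ce^{-c_0|\xi|^2t}$ on $\{|\xi|<r_0\}$, and hence, using $\|\hat f\|_{L^\infty}\le(2\pi)^{-3/2}\|f\|_{L^1}$,
\begin{align*}
\|\nabla^k G^\ell*f\|_{L^2}^2\le C\|f\|_{L^1}^2\int_{\mathbb{R}^3}|\xi|^{2k}e^{-2c_0|\xi|^2t}\,d\xi\le C\|f\|_{L^1}^2(1+t)^{-\frac32-k},
\end{align*}
where the integral is $\le C$ for $t\le 1$ and, by the scaling $\xi\mapsto\xi/\sqrt{t}$, $\le Ct^{-\frac32-k}$ for $t\ge 1$. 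Taking square roots yields \eqref{LowLG}.

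For the high-frequency part, Lemma \ref{THZ4} gives ${\rm{Re}}\,\lambda_i\le -R$ for $|\xi|\ge r_0$, so $|e^{\lambda_it}|\le e^{-Rt}$. The divided differences $\frac{e^{\lambda_it}-e^{\lambda_jt}}{\lambda_i-\lambda_j}$ and $\frac{\lambda_ie^{\lambda_it}-\lambda_je^{\lambda_jt}}{\lambda_i-\lambda_j}$ that occur in \eqref{G1} are a priori singular where two eigenvalues collide, and they carry $|\xi|$ prefactors; I would control them by the integral representation $\frac{e^{\lambda_it}-e^{\lambda_jt}}{\lambda_i-\lambda_j}=t\int_0^1 e^{(\theta\lambda_i+(1-\theta)\lambda_j)t}\,d\theta$ (and its analogue for the second quotient), which is regular everywhere and bounded by $t\,e^{-Rt}\le Ce^{-\frac{R}{2}t}$, and by observing that for large $|\xi|$ the denominators grow, $|\lambda_1-\lambda_2|\sim|\xi|$ and $|\lambda_3-\lambda_4|\sim|\xi|^2$, which exactly cancels the $|\xi|$-growth in the numerators of $\hat G_{12},\hat G_{21},\hat G_{22}$ and the $|\xi|^2$-growth hidden in $\hat G_{33}$. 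This gives $|\hat G(t,\xi)|\le Ce^{-\frac{R}{2}t}$ for all $|\xi|\ge r_0$, whence
\begin{align*}
\|\nabla^k G^h*f\|_{L^2}^2\le Ce^{-Rt}\int_{\mathbb{R}^3}|\xi|^{2k}|\hat f(\xi)|^2\,d\xi=Ce^{-Rt}\|\nabla^k f\|_{L^2}^2.
\end{align*}
Adding the two pieces via $G*f=G^\ell*f+G^h*f$, using the triangle inequality, and renaming $R/2$ as $R$ completes the proof.

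I expect the main obstacle to be exactly this high-frequency bookkeeping: one must verify, entry by entry in \eqref{G1}, that none of the off-diagonal terms genuinely loses a derivative, since otherwise the high-frequency remainder would only be controlled by $Ce^{-Rt}\|\nabla^{k+1}f\|_{L^2}$, which would weaken the estimate and could break the subsequent nonlinear decay argument. By contrast, the low-frequency computation — in particular the scaling integral producing the rate $(1+t)^{-3/4-k/2}$ — is routine.
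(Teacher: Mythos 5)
Your proof is correct and follows exactly the route the paper intends: the paper states this proposition without proof as a direct consequence of the explicit Fourier formula \eqref{G1} and the eigenvalue asymptotics in Lemma \ref{THZ4}, and your Plancherel/frequency-splitting argument, with the divided-difference integral representation to handle the confluence of $\lambda_1,\lambda_2$ near $|\xi|=1/2$ and the denominator growth $|\lambda_1-\lambda_2|\sim|\xi|$, $|\lambda_3-\lambda_4|\sim|\xi|^2$ cancelling the numerator growth in $\hat G_{22}$ and $\hat G_{33}$, supplies precisely the details the paper omitted. One small point worth recording: the low-frequency projector $\mathcal{K}_1$ is supported up to $|\xi|<R_0$, not $|\xi|<r_0$, so on the annulus $r_0\le|\xi|<R_0$ you should use ${\rm{Re}}\,\lambda_i\le -R$ together with $e^{-Rt}\le Ce^{-c_0|\xi|^2t}$ (valid there since $|\xi|\le R_0$) to keep the Gaussian bound, after which \eqref{LowLG} follows exactly as you wrote.
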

	Finally, we derive the lower bounds on the above decay rate of $(\phi,u,v)$ to the linearized problem \eqref{Main4}-\eqref{32} by selecting special initial data.
	\begin{prop}\label{pro-li-d}
		Assume the conditions in Theorem \ref{thm3} hold. The global solution $(\phi,u,v)$ of the linearized problem \eqref{Main4}-\eqref{32} satisfies for sufficiently large-time $t\geq t_0$ that
		\begin{equation}\label{pro-li-d1}
			c_*(1+t)^{-\frac{3}{4}}\leq \|(\phi,u,v)(t)\|_{L^2}\leq C(1+t)^{-\frac{3}{4}},
		\end{equation}
		where $C$  and $c_*$ are positive constants independent of time.
	\end{prop}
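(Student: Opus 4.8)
The statement has an easy half and a hard half. The upper bound is immediate: the solution of the linearized problem \eqref{Main4}--\eqref{32} is exactly $(\phi,u,v)=G*U_0$ with $U_0=(\phi_0,u_0,v_0)^t$, so Proposition \ref{lemL1} with $k=0$ gives
\begin{equation*}
\|(\phi,u,v)(t)\|_{L^2}\le C(1+t)^{-\frac34}\|U_0\|_{L^1}+Ce^{-Rt}\|U_0\|_{L^2}\le C(1+t)^{-\frac34},
\end{equation*}
since $U_0\in L^1\cap H^s$ by the hypotheses of Theorem \ref{thm3} and $e^{-Rt}\le C(1+t)^{-3/4}$.

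For the lower bound I would work on the Fourier side using the explicit formulas \eqref{LsU}. The structural hypothesis $({\rm I}-\xi\xi^t/|\xi|^2)\hat u_0=0$ annihilates every term in \eqref{LsU} carrying the factor $({\rm I}-\xi\xi^t/|\xi|^2)\hat u_0$; in particular the $\hat v$-component collapses to the scalar relation
\begin{equation*}
\hat v(t,\xi)=\frac{(\lambda_3+1)e^{\lambda_3 t}-(\lambda_4+1)e^{\lambda_4 t}}{\lambda_3-\lambda_4}\,\hat v_0(\xi).
\end{equation*}
Since $\|(\phi,u,v)(t)\|_{L^2}\ge\|v(t)\|_{L^2}$, it is enough to bound $\|v(t)\|_{L^2}$ from below; the scalar formula for $\hat\phi$ would serve equally well, at the extra cost of absorbing the harmless cross term $i\xi\cdot\hat u_0$.

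The heart of the argument is a low-frequency estimate on the shrinking ball $B_t:=\{|\xi|\le t^{-1/2}\}$. Choose $t_0\ge r_0^{-2}$ so that $B_t\subset\{|\xi|<r_0\}$ for $t\ge t_0$, where Lemma \ref{THZ4} applies. On $B_t$ one has $\lambda_3 t=-|\xi|^2 t/(c+1)+O(|\xi|^4 t)=-|\xi|^2 t/(c+1)+O(t^{-1})$, $\lambda_3+1=1+O(|\xi|^2)$, $\lambda_4+1=-c+O(|\xi|^2)$, $\lambda_3-\lambda_4=(c+1)+O(|\xi|^2)$ and ${\rm Re}\,\lambda_4\le-(c+1)/2$, so that
\begin{equation*}
\frac{(\lambda_3+1)e^{\lambda_3 t}-(\lambda_4+1)e^{\lambda_4 t}}{\lambda_3-\lambda_4}=\frac{1}{c+1}e^{-\frac{|\xi|^2 t}{c+1}}\bigl(1+O(t^{-1})\bigr)+O(e^{-t/2}),
\end{equation*}
with implied constants uniform in $\xi\in B_t$ and $e^{-|\xi|^2 t/(c+1)}\in[e^{-1},1]$ on $B_t$. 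Using $|\hat v_0(\xi)|\ge c_0$ for $|\xi|<r_0$ from \eqref{in-data-optimal} and enlarging $t_0$ if necessary, one obtains $|\hat v(t,\xi)|\ge c_1>0$ for all $\xi\in B_t$, $t\ge t_0$, with $c_1$ depending only on $c_0$ and $c$. Parseval's equality then gives
\begin{equation*}
\|(\phi,u,v)(t)\|_{L^2}^2\ge\|v(t)\|_{L^2}^2\ge\int_{B_t}|\hat v(t,\xi)|^2\,d\xi\ge c_1^2\,|B_t|=\frac{4\pi}{3}c_1^2\,t^{-3/2},
\end{equation*}
whence $\|(\phi,u,v)(t)\|_{L^2}\ge c_*(1+t)^{-3/4}$ for $t\ge t_0$ after adjusting the constant.

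The only delicate step is the uniformity of the expansion above: the remainders $O(|\xi|^4 t)$ coming from the eigenvalue asymptotics of Lemma \ref{THZ4} are not controllable on a fixed ball $|\xi|<r_0$, which is exactly why the integration is localized to the shrinking ball $B_t$ — there $O(|\xi|^4 t)=O(t^{-1})$ and the Gaussian factors stay bounded away from $0$, so every correction is genuinely lower order as $t\to\infty$.
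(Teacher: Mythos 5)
Your proof is correct, and for the lower bound it takes a mildly different (and arguably cleaner) route than the paper.

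For the upper bound, both you and the paper simply apply Proposition~\ref{lemL1} with $k=0$; identical.

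For the lower bound, the paper works on the fixed ball $\{|\xi|<r_0\}$ and separately establishes the sharp lower bound $(1+t)^{-3/4}$ for each of $\|\phi\|_{L^2}$, $\|u\|_{L^2}$, and $\|v\|_{L^2}$: it splits each of $\hat\phi,\hat u,\hat v$ into a ``main'' piece and ``error'' pieces, bounds the $L^2$ norm of the main piece from below by an (implicit) Gaussian scaling computation $\int_{|\xi|<r_0}|\,\cdot\,|^2 d\xi \gtrsim (1+t)^{-3/2}$, shows the errors decay faster, and concludes via the reverse triangle inequality. You instead note that a lower bound for any single component suffices, pick $v$, use the structural assumption $({\rm I}-\xi\xi^t/|\xi|^2)\hat u_0=0$ to kill the cross term in $\hat v$, and then argue pointwise on the shrinking ball $B_t=\{|\xi|\le t^{-1/2}\}$, where the eigenvalue remainders $O(|\xi|^4 t)$ become $O(t^{-1})$ and the Gaussian multiplier stays in $[e^{-1},1]$; integrating a uniform pointwise lower bound over $|B_t|\sim t^{-3/2}$ gives the result directly, without computing any Gaussian integral or invoking the reverse triangle inequality. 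The trade-off: the paper obtains the slightly stronger information that every component decays exactly at rate $(1+t)^{-3/4}$, whereas your argument is shorter and makes the uniformity of the error terms on the frequency region explicit. Both are valid proofs of the stated proposition.
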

	\begin{proof}
		As a result of Proposition \ref{lemL1}, one has 
		\begin{equation}\label{UT1}
		\|(\phi,u,v)(t)\|_{L^2}=\|U\|_{L^2}\leq \|G*U_0\|_{L^2}\leq C\|G\|_{L^2}\|U_0\|_{L^1}\leq C(1+t)^{-\frac{3}{4}}.	
		\end{equation}
		It remains to establish the lower bound of the time decay rates. By \eqref{LsU}, we have  
		\begin{equation}\label{pro-li-d2}
			\begin{aligned}
				\hat{\phi}(t,\xi)=\frac{\lambda_1e^{\lambda_2t}-\lambda_2e^{\lambda_1t}}{\lambda_1-\lambda_2}	\hat{\phi}_0(\xi)-\frac{e^{\lambda_1t}-e^{\lambda_2t}}{\lambda_1-\lambda_2}i\xi\cdot\hat{u}_0(\xi)\triangleq Y_1+Y_2.	
			\end{aligned}
		\end{equation}
		In terms of the assumption \eqref{in-data-optimal}, it is easily seen that
		$$
		\begin{aligned}
			\|Y_1\|_{L^2}^2
			&=\int_{\mathbb{R}^3}\Big|\frac{\lambda_1e^{\lambda_2t}-\lambda_2e^{\lambda_1t}}{\lambda_1-\lambda_2}	\hat{\phi}_0(\xi)\Big|^2d\xi\\
			&\geq\int_{|\xi|<r_0}\Big|\frac{\lambda_1e^{\lambda_2t}-\lambda_2e^{\lambda_1t}}{\lambda_1-\lambda_2}	\hat{\phi}_0(\xi)\Big|^2d\xi\\
			&\geq c_0^2\int_{|\xi|<r_0}\Big|\frac{\lambda_1e^{\lambda_2t}-\lambda_2e^{\lambda_1t}}{\lambda_1-\lambda_2}\Big|^2d\xi\\
			&\geq c_1^2(1+t)^{-\frac{3}{2}},
		\end{aligned}
		$$
		where $c_1$ is a positive constant independent of time. As for $Y_2$, it can be bounded by 
		$$
		\|Y_2\|_{L^2}\leq  C(1+t)^{-\frac{5}{4}}.
		$$
		There we obtain for large-time $t\geq t_0$ that 
		\begin{equation}\label{LT3}
			\|\phi\|_{L^2}=\|\hat{\phi}\|_{L^2}\geq \|Y_1\|_{L^2}-\|Y_2\|_{L^2}
			\geq c_1(1+t)^{-\frac{3}{4}}-C(1+t)^{-\frac{5}{4}}\geq \frac{1}{2}c_1(1+t)^{-\frac{3}{4}}.
		\end{equation}
		As for the velocity $u(t,x)$, we have by \eqref{in-data-optimal} that 
		$$
		\begin{aligned}
			\hat{u}(t,\xi)	
			&=-\frac{e^{\lambda_1t}-e^{\lambda_2t}}{\lambda_1-\lambda_2}i\xi\cdot\hat{\phi}_0(\xi)+\frac{\lambda_1e^{\lambda_1t}-\lambda_2e^{\lambda_2t}}{\lambda_1-\lambda_2}\frac{\xi\xi^t}{|\xi|^2}\hat{u}_0(\xi)
			+\frac{e^{\lambda_3t}-e^{\lambda_4t}}{\lambda_3-\lambda_4}\hat{v}_0(\xi)\\	
			&\triangleq Y_3+Y_4+Y_5.
		\end{aligned}
		$$
		It then follows from Lemma \ref{THZ4} and direct calculations to prove 
		$$
		\|Y_3\|_{L^2}\leq C(1+t)^{-\frac{5}{4}},\quad \|Y_4\|_{L^2}\leq C(1+t)^{-\frac{7}{4}}.
		$$
		According to the assumption \eqref{in-data-optimal}, we can prove that 
		$$
		\begin{aligned}
			\|Y_5\|_{L^2}^2
			&=\int_{\mathbb{R}^3}\Big|\frac{e^{\lambda_3t}-e^{\lambda_4t}}{\lambda_3-\lambda_4}	\hat{v}_0(\xi)\Big|^2d\xi\\
			&\geq\int_{|\xi|<r_0}\Big|\frac{e^{\lambda_3t}-e^{\lambda_4t}}{\lambda_3-\lambda_4}	\hat{v}_0(\xi)\Big|^2d\xi\\
			&\geq c_0^2\int_{|\xi|<r_0}\Big|\frac{e^{\lambda_3t}-e^{\lambda_4t}}{\lambda_3-\lambda_4}\Big|^2d\xi\\
			&\geq c_2^2(1+t)^{-\frac{3}{2}},
		\end{aligned}
		$$
		where $c_2$ represents a positive constant independent of time. Thus, we deduce for large time $t\geq t_0$ that
		\begin{equation}\label{pro-li-d8}
			\begin{aligned}
				\|u(t)\|_{L^2}^2&\geq \int_{\mathbb{R}^3} |Y_3+Y_4+Y_5|^2d\xi\\
				&\geq \frac{1}{2}\|Y_5\|_{L^2}^2-2\|Y_4\|_{L^2}^2-2\|Y_3\|_{L^2}^2\\
				&\geq \frac{1}{2}c_2^2(1+t)^{-\frac{3}{2}}-C(1+t)^{-\frac{7}{2}}-C(1+t)^{-\frac{5}{2}}\\
				&\geq \frac{1}{4}c_2^2(1+t)^{-\frac{3}{2}},
			\end{aligned}
		\end{equation}
		which gives 
		\begin{equation}\label{LT2}
			\|u(t)\|_{L^2}\geq \frac{1}{2}c_2(1+t)^{-\frac{3}{4}}.
		\end{equation}
		In the same way, we can  obtain the lower bound of the time decay rate of $v(t)$
		\begin{equation}\label{LT1}
			\|v(t)\|_{L^2}\geq \frac{1}{2}c_3(1+t)^{-\frac{3}{4}},
		\end{equation}
		where $c_3>0$ is a constant independent of time.  Let $c_*=\frac{1}{2}\min\{c_1,c_2,c_3 \}$. Combining  \eqref{LT3}, \eqref{LT2}, \eqref{LT1}, and \eqref{UT1} yields  
		\begin{equation}
			c_*(1+t)^{-\frac{3}{4}}\leq \|(\phi,u,v)(t)\|_{L^2}\leq C(1+t)^{-\frac{3}{4}},
		\end{equation}
		where $C$  and $c_*$ are positive constants independent of time. Thus we have completed the proof.
	\end{proof}
	\subsection{Time decay estimates of the nonlinear equations}\label{subsec5-non-decay}
\hspace{2em}	In this subsection, we intend to obtain the upper bound of the optimal decay rates stated in \eqref{Upper}. To this end, we introduce the energy for any $0\leq j\leq s$,
	\begin{equation}\label{eskt}
		\mathcal{E}_j^s(t)=\|\nabla^j \phi (t)\|_{H^{s-j}}^2+\|\nabla^j u(t)\|_{H^{s-j}}^2+\|\nabla^j v(t)\|_{H^{s-j}}^2,
	\end{equation}
	and the time-weighted energy functional
	\begin{equation}\label{mskt}
		M(t)=\sup_{0<\tau\leq t}\left\{(1+\tau)^{\frac{3}{4}}\|(\phi,u,v)(\tau)\|_{H^{s}}\right\}.
	\end{equation}
	It is evident that 
	\begin{equation}\label{THZ1}
		\|(\phi,u,v)(t)\|_{L^2}\leq (1+t)^{-\frac{3}{4}}M(t).
	\end{equation}
	The next goal is to show that $M(t)$ has a uniform upper bound independent of time. 
	
	\begin{lem}\label{lemA1}
		Assume the conditions in Theorem \ref{thm1} hold. There exists a positive constant $C$ depended on $ \mathcal{I}_0$ and $\varepsilon_0$ such that
		\begin{equation}\label{lemA1-1}
			\|(\phi,u,v)(t)\|_{H^s}\leq C(1+t)^{-\frac{3}{4}}.
		\end{equation}
	\end{lem}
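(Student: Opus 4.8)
The plan is to close a continuity (bootstrap) argument on the time-weighted functional $M(t)$ defined in \eqref{mskt}, using two ingredients already at hand: the uniform energy--dissipation bound of Proposition \ref{pro-est}, which in particular yields $\int_0^{\infty}\mathcal D(\tau)\,d\tau\le C\varepsilon_0^2$ for the full dissipation rate, and the sharp low-frequency decay of the Green function in Proposition \ref{lemL1}. First I would upgrade the energy identity to a genuine decay inequality; then I would control the leftover low-frequency term via Duhamel's formula \eqref{solutionofF}; finally I would absorb the resulting self-referential term thanks to the smallness of $\varepsilon_0$.

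\emph{A decay differential inequality.} Combining Lemmas \ref{lem-n} and \ref{lem-nx} as in the proof of Proposition \ref{pro-est}, but retaining the drag dissipation, gives $\frac{d}{dt}\mathcal E(t)+C\mathcal D(t)\le 0$ with $\mathcal E(t)\sim\|(\phi,u,v)(t)\|_{H^s}^2$ and $\mathcal D(t):=\|(u-v)(t)\|_{H^s}^2+\|\nabla\phi(t)\|_{H^{s-1}}^2+\|\nabla v(t)\|_{H^s}^2$, which also dominates $\|\nabla u(t)\|_{H^{s-1}}^2$. Decomposing each unknown into its low- and high-frequency parts and using Lemma \ref{lema6} (on the high part $\|f^h\|_{H^s}\lesssim\|\nabla f^h\|_{H^{s-1}}$, on the low part $\|\nabla f^\ell\|_{H^{s-1}}\lesssim\|\nabla f\|_{L^2}$) one obtains $\|(\phi,u,v)\|_{H^s}^2\lesssim\|(\phi^\ell,u^\ell,v^\ell)\|_{L^2}^2+\mathcal D(t)$, and therefore
\[
\frac{d}{dt}\mathcal E(t)+C_1\,\mathcal E(t)\le C_1\,\|(\phi^\ell,u^\ell,v^\ell)(t)\|_{L^2}^2 .
\]

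\emph{Duhamel estimate and closure.} Applying the low-frequency bound \eqref{LowLG} to \eqref{solutionofF} gives $\|(\phi^\ell,u^\ell,v^\ell)(t)\|_{L^2}\le C(1+t)^{-3/4}\mathcal I_0+C\int_0^t(1+t-\tau)^{-3/4}\|F(\tau)\|_{L^1}\,d\tau$; the operator $\mathcal J$ in $f_3$ causes no loss here, since $\hat G_{13}=0$ while $\hat G_{23},\hat G_{33}$ are scalar and hence commute with the bounded multiplier $\hat{\mathcal J}=\mathrm I-\xi\xi^t/|\xi|^2$, so $\mathcal J$ may be stripped off $f_3$ up to a constant. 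By H\"older's inequality and $|e^\phi-1|\lesssim|\phi|$ (legitimate as $\|\phi\|_{L^\infty}$ is small by Proposition \ref{pro-est}), one has $\|F(\tau)\|_{L^1}\lesssim\|(\phi,u,v)(\tau)\|_{L^2}\,\mathcal D(\tau)^{1/2}\le(1+\tau)^{-3/4}M(t)\,\mathcal D(\tau)^{1/2}$. Splitting $\int_0^t=\int_0^{t/2}+\int_{t/2}^t$: on $[0,t/2]$ the kernel obeys $(1+t-\tau)^{-3/4}\le C(1+t)^{-3/4}$, and Cauchy--Schwarz together with $\int_0^\infty(1+\tau)^{-3/2}d\tau<\infty$ and $\int_0^\infty\mathcal D\le C\varepsilon_0^2$ bounds $\int_0^{t/2}\|F\|_{L^1}d\tau$ by $C\varepsilon_0 M(t)$; on $[t/2,t]$ one uses $(1+\tau)^{-3/4}\le C(1+t)^{-3/4}$ and Cauchy--Schwarz in $\tau$ once more. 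This gives $\|(\phi^\ell,u^\ell,v^\ell)(t)\|_{L^2}\le C(1+t)^{-3/4}(\mathcal I_0+\varepsilon_0 M(t))$. Inserting $\|(\phi^\ell,u^\ell,v^\ell)\|_{L^2}^2\le C(1+t)^{-3/2}(\mathcal I_0^2+\varepsilon_0^2M(t)^2)$ into the inequality above and applying Gr\"onwall, with $\int_0^t e^{-C_1(t-\tau)}(1+\tau)^{-3/2}d\tau\le C(1+t)^{-3/2}$ and the monotonicity of $M$, yields $(1+t)^{3/2}\|(\phi,u,v)(t)\|_{H^s}^2\le C\varepsilon_0^2+C(\mathcal I_0+\varepsilon_0M(t))^2$. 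Taking the supremum over $t$ and absorbing the $C\varepsilon_0^2M(t)^2$ term (possible since $\varepsilon_0$ is small) gives $M(t)\le C(\varepsilon_0+\mathcal I_0)$, which is \eqref{lemA1-1}.

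\emph{Main difficulty.} Because the system carries no zeroth-order dissipation, the energy is not coercive; the frequency splitting in the first step is exactly what converts the energy identity into a dissipative ODE whose source is the fast-decaying low-frequency mode, and the time splitting of the Duhamel integral is what allows the quadratic term $M(t)^2$ to enter multiplied by the small constant $\varepsilon_0^2$ coming from $\int_0^\infty\mathcal D\le C\varepsilon_0^2$ --- this is why only $\varepsilon_0$ must be small while the constant $C$ may depend on the ($L^1$-)size $\mathcal I_0$ of the data. A secondary point is that $\mathcal J$ is not bounded on $L^1$; this is circumvented by commuting it past the scalar incompressible block of the Green matrix, as noted above.
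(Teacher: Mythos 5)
Your proof is correct and follows essentially the same route as the paper: derive the dissipative ODE $\frac{d}{dt}\mathcal E_0^s + C_1\mathcal E_0^s \lesssim \|(\phi^\ell,u^\ell,v^\ell)\|_{L^2}^2$ via Lemma \ref{lema6}, bound the low-frequency source through Duhamel and Proposition \ref{lemL1} (using the boundedness of $\hat{\mathcal J}$ exactly as the paper does), and close the bootstrap for $M(t)$ by Gr\"onwall and the smallness of $\varepsilon_0$. The only cosmetic difference is that you split the Duhamel integral at $t/2$ before applying Cauchy--Schwarz, whereas the paper applies Cauchy--Schwarz directly to $\int_0^t(1+t-\tau)^{-3/2}(1+\tau)^{-3/2}\,d\tau$; both give the same $(1+t)^{-3/4}$ bound.
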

	\begin{proof}
		In terms of \eqref{At} and the definition of $\mathcal{E}_0^s(t)$, one has
		\begin{equation*}
			\frac{d}{dt}\mathcal{E}^s_0(t)+C(\|\nabla (\phi,u) (t)\|_{H^{s-1}}^2+\|\nabla v(t)\|_{H^s}^2)\leq 0.
		\end{equation*}
	Applying the fact in Lemma \ref{lema6} that $\|(\phi^{h},u^{h},v^{h})(t)\|_{L^2}\leq C \|\nabla(\phi,u,v)(t)\|_{L^2}$, we can prove there exists a positive constant $C_1$ such that
		\begin{equation}\label{lem-hs1}
			\frac{d}{dt}\mathcal{E}^s_0(t)+C_1\mathcal{E}^s_0(t)\leq C\|(\phi^{\ell},u^{\ell},v^{\ell})(t)\|^2_{L^2}.
		\end{equation}
	Utilizing Duhamel's principle, for the low frequency part $U^\ell(t,x)=(\phi^{\ell},u^{\ell},v^{\ell})^t$, one has
		\begin{equation}\label{lem-hs2}
			U^\ell(t,x)=G^\ell*U_0+\int_0^tG^\ell(t-\tau)*F(\tau)d\tau.
		\end{equation}
		The nonlinear term $F=(f_1,f_2,f_3)^t$ satisfies
		$$
		f_1=-u\cdot\nabla\phi,~f_2=-u\cdot\nabla u,~~f_3=-\mathcal{J}(v\cdot\nabla v)+\mathcal{J}(c(e^{\phi}-1)(u-v)).
		$$
It is easy to get from \eqref{G1} that $G_{13}=G_{31}=0$. Then we have  
			\begin{align*}
				&\int_0^t \|G^\ell(t-\tau)*F(\tau)\|_{L^2}d\tau \\
				&\leq \int_0^t\Big(\|(G_{11}^\ell+G_{21}^\ell)(t-\tau)*f_1(\tau)\|_{L^2} +\|(G_{12}^\ell+G_{22}^\ell+G_{32}^\ell)(t-\tau)*f_2(\tau)\|_{L^2}\Big)d\tau\\
				&\quad+\int_0^t \|(G_{23}^\ell+G^\ell _{33})(t-\tau)*f_3(\tau)\|_{L^2}d\tau\\
				&\leq C\int_0^t(1+t-\tau)^{-\frac{3}{4}}\Big(\|(u\cdot \nabla\phi)(\tau)\|_{L^1}+\|(u\cdot\nabla u)(\tau)\|_{L^1}\\
				&\quad+\|(v\cdot\nabla v)(\tau)\|_{L^1}+\|((e^\phi-1)(u-v))(\tau)\|_{L^1}\Big)d\tau\\
				& \leq CM(t)\int_0^t(1+t-\tau)^{-\frac{3}{4}}(1+\tau)^{-\frac{3}{4}}\Big(\| \nabla(\phi,u,v)(\tau)\|_{L^2}+\|(u-v)(\tau)\|_{L^2}\Big)d\tau,
			\end{align*}
		where we have used the fact that the Fourier transform of the operator $\hat{\mathcal{J}}={\rm{I}}-\frac{\xi\xi^t}{|\xi|^2}$ is bounded. 
		
	Using the Parseval's equality, H$\ddot{\text{o}}$lder's inequality, we have
		\begin{equation}\label{lem-hs3}
			\begin{aligned}
				&\|(\phi^{\ell},u^{\ell},v^{\ell})(t)\|_{L^2}\\
				&\leq \|G^\ell*U_0\|_{L^2}+\int_0^t\|G^\ell(t-\tau)*F(\tau)\|_{L^2}d\tau\\
				&\leq C(1+t)^{-\frac{3}{4}}\|U_0\|_{L^1}+CM(t)\int_0^t(1+t-\tau)^{-\frac{3}{4}}(1+\tau)^{-\frac{3}{4}}\Big(\|\nabla (\phi,u,v)(\tau)\|_{L^2}+\|(u-v)(\tau)\|_{L^2}\Big)d\tau\\
				&\leq C \mathcal{I}_0(1+t)^{-\frac{3}{4}}+CM(t)\Big(\int_0^t(1+t-\tau)^{-\frac{3}{2}}(1+\tau)^{-\frac{3}{2}}d\tau\Big)^{\frac{1}{2}}\Big(\int_0^t(\|\nabla (\phi,u,v)(\tau)\|_{L^2}^2+\|(u-v)(\tau)\|_{L^2}^2)d\tau\Big)^{\frac{1}{2}}\\
				&\leq C(1+t)^{-\frac{3}{4}}( \mathcal{I}_0+\varepsilon_0M(t)),
			\end{aligned}
		\end{equation}
		where the last step used the fact that 
		\begin{equation}\label{D1}
			\int_0^t(\|\nabla (\phi,u,v)(\tau)\|_{L^2}^2+\|(u-v)(\tau)\|_{L^2}^2)d\tau\leq 
			C\varepsilon_0^2.
		\end{equation}
		Substituting \eqref{lem-hs3} into \eqref{lem-hs1}, one has
		\begin{equation*}
			\frac{d}{dt}\mathcal{E}^s_0(t)+C_1\mathcal{E}^s_0(t)\leq C(1+t)^{-\frac{3}{2}}\left( \mathcal{I}_0+\varepsilon_0M(t)\right)^2.
		\end{equation*}
		Applying Gr$\ddot{\text{o}}$nwall's inequality yields that
		\begin{equation*}
			\begin{aligned}
				\mathcal{E}^s_0(t)
				\leq C(1+t)^{-\frac{3}{2}}\left(\varepsilon_0+ \mathcal{I}_0+\varepsilon_0M(t)\right)^2.
			\end{aligned}
		\end{equation*}
	Then it follows from above and \eqref{mskt} to prove 
		\begin{equation}
			M(t)\leq C\left(\varepsilon_0+ \mathcal{I}_0+\varepsilon_0M(t)\right).
		\end{equation}
		Since $\varepsilon_0$ is small enough and $ \mathcal{I}_0$ is bounded, it gives rise to
		\begin{equation}\label{lem-hs5}
			M(t)\leq C(\varepsilon_0+ \mathcal{I}_0).
		\end{equation}
	As a consequence, one has
		\begin{equation}
			\|(\phi,u,v)(t)\|_{H^s}\leq C(\varepsilon_0+ \mathcal{I}_0)(1+t)^{-\frac{3}{4}}\leq C(1+t)^{-\frac{3}{4}}.
		\end{equation}
		This completes the proof of Lemma \ref{lemA1}.
	\end{proof}
	
Noting that the decay rates of high-order derivatives are slow. Thus we intend to improve the time decay rate of the derivatives for $ 1\leq j\leq s$.
	\begin{lem}\label{lem-hk}
		Assume the conditions in Theorem \ref{thm1} hold. There exists a positive constant $C$ depended on $\varepsilon_0$ and $ \mathcal{I}_0$ such that for $1\leq j\leq s$, it holds
		\begin{equation}\label{lem-hk0}
			\|\nabla^j(\phi,u,v)(t)\|_{H^{s-j}}\leq C(1+t)^{-\frac{3}{4}-\frac{j}{2}}.
		\end{equation}
	\end{lem}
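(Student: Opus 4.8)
The plan is to argue by induction on $j$, proving at the $j$-th step that $\mathcal{E}_j^s(t)\le C(1+t)^{-\frac32-j}$, which is equivalent to \eqref{lem-hk0} (with $\mathcal{E}_j^s$ as in \eqref{eskt}). The base case $j=1$ replaces the induction hypothesis stated below by Lemma \ref{lemA1}. In the inductive step I may assume $\|\nabla^m(\phi,u,v)(t)\|_{L^2}\le C(1+t)^{-\frac34-\frac m2}$ for $0\le m\le j-1$, and, as a by-product available at each completed level, the enhanced decay $\|\nabla^m(u-v)(t)\|_{L^2}\le C(1+t)^{-\frac54-\frac m2}$ for $0\le m\le j-1$; the latter is available because the drag term produces a dissipation of $\|\nabla^m(u-v)\|_{L^2}^2$ at the same differential order $m$ in the basic energy identity of Lemma \ref{lem-n} (it can be read off from $u-v=-\partial_tu-\nabla\phi-u\cdot\nabla u$ together with the equations).

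First I would establish a dissipative differential inequality for $\mathcal{E}_j^s$. Running the energy computations of Lemmas \ref{lem-n} and \ref{lem-nx} with $\nabla^k$ for $j\le k\le s$ in place of the full ranges there, and absorbing the nonlinear contributions via the smallness of $\delta$, one obtains a bound of the form
\begin{equation*}
\frac{d}{dt}\mathcal{E}_j^s(t)+C\big(\|\nabla^j(u-v)(t)\|_{H^{s-j}}^2+\|\nabla^{j+1}v(t)\|_{H^{s-j}}^2+\|\nabla^j\phi(t)\|_{H^{s-j}}^2\big)\le C\delta\,\mathcal{E}_j^s(t)+C(1+t)^{-\frac32-j},
\end{equation*}
where the last term collects the genuinely lower-order remainders (e.g. the commutator contribution $\|\nabla^{j-1}(u-v)\|_{L^2}^2$ from the cross estimate), which by the induction hypothesis and the enhanced $u-v$ decay indeed decay like $(1+t)^{-\frac32-j}$. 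Splitting each of $\phi,u,v$ into low and high frequencies via Lemma \ref{lema6}, the high-frequency parts of $\mathcal{E}_j^s$ are dominated by the dissipation (raising the order by one costs nothing on the high-frequency region), so after absorbing $C\delta\,\mathcal{E}_j^s$ we arrive at
\begin{equation*}
\frac{d}{dt}\mathcal{E}_j^s(t)+C_1\mathcal{E}_j^s(t)\le C\|\nabla^j(\phi^\ell,u^\ell,v^\ell)(t)\|_{L^2}^2+C(1+t)^{-\frac32-j}.
\end{equation*}

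Next I would bound $\|\nabla^j(\phi^\ell,u^\ell,v^\ell)(t)\|_{L^2}$ through Duhamel's formula \eqref{lem-hs2}. The linear part satisfies $\|\nabla^j G^\ell*U_0\|_{L^2}\le C(1+t)^{-\frac34-\frac j2}\|U_0\|_{L^1}$ by \eqref{LowLG}, and for $\int_0^t\|\nabla^j G^\ell(t-\tau)*F(\tau)\|_{L^2}\,d\tau$ I split the integral at $t/2$. On $[0,t/2]$ I keep all $j$ derivatives on $G^\ell$, use $(1+t-\tau)^{-\frac34-\frac j2}\le C(1+t)^{-\frac34-\frac j2}$ and $\|F(\tau)\|_{L^1}\le C(1+\tau)^{-\frac32}$ (each of $u\cdot\nabla\phi,\,u\cdot\nabla u,\,v\cdot\nabla v,\,(e^\phi-1)(u-v)$ being a product of two factors decaying like $(1+\tau)^{-\frac34}$, and $\widehat{\mathcal J}$ being a bounded multiplier), so this piece is $\le C(1+t)^{-\frac34-\frac j2}$ because $(1+\tau)^{-\frac32}$ is integrable. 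On $[t/2,t]$, where $(1+\tau)\sim(1+t)$, I redistribute $\nabla^j G^\ell*F=\nabla^{\min\{j,2\}}G^\ell*\nabla^{\,j-\min\{j,2\}}F$; then $\int_{t/2}^t(1+t-\tau)^{-\frac34-\frac{\min\{j,2\}}2}\,d\tau\le C$, and the induction hypothesis — in particular the faster decay of the derivatives of $u-v$ — gives $\|\nabla^{\,j-\min\{j,2\}}F(\tau)\|_{L^1}\le C(1+\tau)^{-1-\frac j2}$, so this piece is $\le C(1+t)^{-1-\frac j2}\le C(1+t)^{-\frac34-\frac j2}$. Hence $\|\nabla^j(\phi^\ell,u^\ell,v^\ell)(t)\|_{L^2}\le C(1+t)^{-\frac34-\frac j2}$, and substituting this into the differential inequality yields $\frac{d}{dt}\mathcal{E}_j^s(t)+C_1\mathcal{E}_j^s(t)\le C(1+t)^{-\frac32-j}$, so Gr\"onwall's inequality gives $\mathcal{E}_j^s(t)\le C(1+t)^{-\frac32-j}$, i.e. \eqref{lem-hk0} at level $j$; this also furnishes the enhanced decay of $\nabla^j(u-v)$ needed to run the next step, closing the induction.

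The main obstacle is twofold. First, the coercivity of the \emph{full} $\mathcal{E}_j^s$ in the differential inequality is delicate in the top band $k\le s$, where the parabolic dissipation of $v$ reaches only order $s+1$ and the drag dissipation of $u-v$ only order $s$; one genuinely needs the cross-term estimate (which produces $\|\nabla^j\phi\|_{H^{s-j}}^2$ directly, not $\|\nabla^{j+1}\phi\|^2$) together with the low/high decomposition and the already-established decay of the lower band. Second, and more delicate, is the $L^1$-bookkeeping of the nonlinearity: the borderline term $u\cdot\nabla^j\phi$ (and its analogues) has $L^1$-norm no better than $(1+\tau)^{-\frac32}$, which is exactly why the time integral must be split into $[0,t/2]\cup[t/2,t]$, why some derivatives must be moved onto $F$, and why the enhanced decay rate of the relative velocity $u-v$ must be propagated simultaneously along the induction; dropping any of these would only give the non-optimal rate $(1+t)^{-\frac14-\frac j2}$.
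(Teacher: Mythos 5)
Your argument follows essentially the same route as the paper's proof of Lemma~\ref{lem-hk}: a dissipative inequality $\frac{d}{dt}\mathcal{E}_j^s+C_1\mathcal{E}_j^s\le C\|\nabla^jU^\ell\|_{L^2}^2+\dots$ obtained from the basic energy estimate, the cross-term estimate, and the low/high decomposition via Lemma~\ref{lema6}; then a Duhamel bound for $\|\nabla^jU^\ell\|_{L^2}$ with the time integral split at $t/2$; then propagation of the enhanced $u-v$ decay; and finally Gr\"onwall. Two points are worth flagging, one a deficiency and one a refinement. First, your parenthetical justification of the enhanced $u-v$ decay via the identity $u-v=-\partial_tu-\nabla\phi-u\cdot\nabla u$ is circular as stated, since $\partial_tu$ is in turn governed by $u-v$ and $\nabla\phi$; the paper instead subtracts the two momentum equations to obtain a damped evolution equation for $u-v$ (its \eqref{THZ7}) and runs a weighted $L^2$ estimate, which is the non-circular realization of your (correct) observation that the drag provides a zeroth-order dissipation of $u-v$. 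A related small slip: after completing level $j-1$, the available enhanced decay is $\|\nabla^m(u-v)\|_{L^2}\lesssim(1+t)^{-5/4-m/2}$ for $m\le j-2$, not $m\le j-1$ as you state; fortunately your Duhamel estimate only uses $m\le j-2$, so nothing breaks. Second, your choice to keep only $\min\{j,2\}$ derivatives on $G^\ell$ on $[t/2,t]$ and push $j-\min\{j,2\}$ onto $F$ is sharper bookkeeping than the paper's explicit $j=1,2$ computations (which keep $j-1$ derivatives on $G^\ell$ and only one on $F$). Your version is in fact what is needed to close the induction for all $j\le s$: with only one derivative on $F$ one has at best $\|\nabla F\|_{L^1}\lesssim(1+\tau)^{-5/2}$, which plateaus and falls short of $(1+t)^{-3/4-j/2}$ once $j\ge4$, whereas pushing $j-2$ derivatives onto $F$ yields $\|\nabla^{j-2}F\|_{L^1}\lesssim(1+\tau)^{-1-j/2}$ under the inductive hypothesis, which is exactly what is required. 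So your write-up is, in this detail, a correct completion of the paper's ``proceed analogously'' step.
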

	\begin{proof}
		According to \eqref{A1} and the smallness of $\delta$, we have 
		\begin{equation}\label{A4}
			\begin{aligned}
				&\frac{1}{2}\frac{d}{dt}(\|\nabla^k \phi \|_{L^2}^2+\|\nabla^k u\|_{L^2}^2+\frac{1}{c}\|\nabla^kv\|_{L^2}^2)
				+\frac{3}{4}\|\nabla^k(u-v)\|_{L^2}^2+\frac{1}{c}\|\nabla\nabla^kv\|_{L^2}^2\\
				&\leq C\delta (\|\nabla^k \phi \|_{L^2}^2+\|\nabla^ku\|_{L^2}^2+\|\nabla^kv\|_{L^2}^2).
			\end{aligned}
		\end{equation}
	Similar to Lemma \ref{lem-nx}, applying the operator $\nabla^{k-1} \mathcal{K}_{\infty}$ to $\eqref{3-1}_2$, multiplying the resulting equation by $\nabla^{k} \phi^h$, and integrating over $\mathbb{R}^3$, using Lemma \ref{lema6}, we obtain
		\begin{equation}\label{0102}
			\begin{aligned}
				&\|\nabla^k \phi^h\|_{L^2}^2+\frac{d}{dt}\int_{\mathbb{R}^3} \nabla^{k-1}u^h\cdot\nabla^k \phi^hdx\\
				&=-\int_{\mathbb{R}^3}\nabla^{k-1}u^h\cdot\nabla^k\text{div}u^hdx-\int_{\mathbb{R}^3}\nabla^{k-1}(u^h-v^h)\cdot\nabla^k \phi ^hdx\\
				&\quad -\int_{\mathbb{R}^3}\nabla^k\mathcal{K}_\infty(u\cdot\nabla \phi)\cdot \nabla^{k-1}u^hdx-\int_{\mathbb{R}^3}\nabla^{k-1}\mathcal{K}_\infty(u\cdot\nabla u)\cdot \nabla^k \phi ^hdx.
			\end{aligned}
		\end{equation}
		The first term and the second term of the right-hand side of \eqref{0102} can be estimated as follows
		\begin{equation}
			\begin{aligned}
				&\Big|\int_{\mathbb{R}^3}\nabla^{k-1}u^h\cdot\nabla^k\text{div}u^hdx\Big|+\Big|\int_{\mathbb{R}^3}\nabla^{k-1}(u^h-v^h)\cdot\nabla^k \phi ^hdx\Big|\\
				&\leq \|\nabla^ku^h\|_{L^2}^2+\|\nabla^{k-1}(u^h-v^h)\|_{L^2}\|\nabla^{k}\phi^h\|_{L^2}\\
				&\leq C\|\nabla^ku\|_{L^2}^2+C\|\nabla^{k}(u-v)\|_{L^2}\|\nabla^{k}\phi^h\|_{L^2}\\
				&\leq C(\|\nabla^ku\|_{L^2}^2+\|\nabla^{k}(u-v)\|_{L^2}^2)+\frac{1}{2}\|\nabla^{k}\phi^h\|_{L^2}^2.
			\end{aligned}
		\end{equation}
		By Young's inequality,  it holds for $2\leq k\leq s$ that 
		\begin{equation}\label{TT1}
			\begin{aligned}
				&	\Big|\int_{\mathbb{R}^3}\nabla^k(u\cdot\nabla \phi)\cdot \nabla^{k-1}udx\Big|\\
				&\leq \Big|\int_{\mathbb{R}^3}\nabla^{k-1}(u\cdot\nabla \phi)\cdot \nabla^{k-1}\text{div}udx\Big|\\
				&\leq \Big|\int_{\mathbb{R}^3}(\nabla^{k-1}(u\cdot\nabla \phi)-u\cdot\nabla^{k-1}\nabla \phi)\cdot \nabla^{k-1}\text{div}udx\Big|+\Big|\int_{\mathbb{R}^3}(u\cdot\nabla^{k-1}\nabla \phi)\cdot \nabla^{k-1}\text{div}udx\Big|\\
				&\leq \big\|[\nabla^{k-1},u]\nabla\phi\big\|_{L^2}\|\nabla^ku\|_{L^2}+C\|u\|_{L^\infty}\|\nabla^k \phi \|_{L^2}\|\nabla^ku\|_{L^2}\\
				&\leq C(\|\nabla u\|_{L^3}\|\nabla^{k-1}\phi\|_{L^6}+\|\nabla^{k-1}u\|_{L^6}\|\nabla \phi\|_{L^3})\|\nabla^{k}u\|_{L^2}+C\delta\|\nabla^k \phi \|_{L^2}\|\nabla^ku\|_{L^2}\\
				&\leq C\delta(\|\nabla^k \phi \|_{L^2}^2+\|\nabla^{k}u\|_{L^2}^2).
			\end{aligned}
		\end{equation}
		After a  direct calculation, it holds for $k=1$ that 
		\begin{equation}\label{TT2}
			\begin{aligned}
				&	\Big|\int_{\mathbb{R}^3}\nabla (u\cdot\nabla \phi)\cdot udx\Big|\leq C\|u\|_{L^\infty}
				\|\nabla\phi\|_{L^2}\|\text{div} u\|_{L^2}\leq C\delta(\|\nabla\phi\|_{L^2}^2+\|\nabla u\|_{L^2}^2).
			\end{aligned}
		\end{equation}
		Combining \eqref{TT1} and \eqref{TT2} yields the following estimates for $1\leq k\leq s$,
		\begin{equation}
			\Big|\int_{\mathbb{R}^3}\nabla^k(u\cdot\nabla \phi)\cdot \nabla^{k-1}udx\Big| \leq C\delta(\|\nabla^k \phi \|_{L^2}^2+\|\nabla^{k}u\|_{L^2}^2).
		\end{equation}
		Thus, we have 
		$$
		\begin{aligned}
			&\Big|\int_{\mathbb{R}^3}\nabla^k\mathcal{K}_\infty(u\cdot\nabla \phi)\cdot \nabla^{k-1}udx\Big|\\
			&\leq \Big|\int_{\mathbb{R}^3}\nabla^k(u\cdot\nabla \phi)\cdot \nabla^{k-1}udx\Big|+	\Big|\int_{\mathbb{R}^3}\nabla^k\mathcal{K}_1(u\cdot\nabla \phi)\cdot \nabla^{k-1}u dx\Big|\\
			&\leq C\delta(\|\nabla^k \phi \|_{L^2}^2+\|\nabla^{k}u\|_{L^2}^2).
		\end{aligned}
		$$
		In a similar way, we also have 
		$$
		\Big|\int_{\mathbb{R}^3}\nabla^{k-1}\mathcal{K}_\infty(u\cdot\nabla u)\cdot \nabla^k \phi ^hdx\Big|
		\leq C\delta(\|\nabla^k \phi \|_{L^2}^2+\|\nabla^{k}u\|_{L^2}^2).
		$$
		Combining the above estimates yields
		\begin{equation}\label{A5}
			\begin{aligned}
				&\frac{1}{2}\|\nabla^k \phi^h\|_{L^2}^2+\frac{d}{dt}\int_{\mathbb{R}^3} \nabla^{k-1}u^h\cdot\nabla^k \phi^hdx\\
				&\leq C(\|\nabla^ku\|_{L^2}^2+\|\nabla^{k}(u-v)\|_{L^2}^2)+C\delta(\|\nabla^k \phi \|_{L^2}^2+\|\nabla^{k}u\|_{L^2}^2).
			\end{aligned}
		\end{equation}
		Choosing the positive constant $\gamma_2$ sufficiently small, then taking $\eqref{A4}+\gamma_2\times(\ref{A5})$ yields
		$$
		\begin{aligned}
			&	\frac{d}{dt}\left\{\frac{1}{2}\big(\|\nabla^k \phi \|_{L^2}^2+\|\nabla^k u\|_{L^2}^2+\frac{1}{c}\|\nabla^kv\|_{L^2}^2\big)+\gamma_2\int_{\mathbb{R}^3} \nabla^{k-1}u^h\cdot\nabla^k \phi^hdx\right\}\\
			&\quad+\frac{3}{4}\|\nabla^k(u-v)\|_{L^2}^2+\frac{1}{c}\|\nabla\nabla^kv\|_{L^2}^2+\frac{1}{2}\gamma_2\|\nabla^k \phi^h\|_{L^2}^2\\
			&\leq  C\delta (\|\nabla^k \phi \|_{L^2}^2+\|\nabla^ku\|_{L^2}^2+\|\nabla^kv\|_{L^2}^2)\\
			&\quad+C\gamma_2(\|\nabla^ku\|_{L^2}^2+\|\nabla^{k}(u-v)\|_{L^2}^2)+C\gamma_2\delta(\|\nabla^k \phi \|_{L^2}^2+\|\nabla^{k}u\|_{L^2}^2).
		\end{aligned}
		$$
		Since $\gamma_2$ and $\delta$ are sufficiently small,  one has
		\begin{equation}\label{lem-hk4}
			\begin{aligned}
				&	\frac{d}{dt}\left\{\frac{1}{2}\big(\|\nabla^k \phi \|_{L^2}^2+\|\nabla^k u\|_{L^2}^2+\frac{1}{c}\|\nabla^kv\|_{L^2}^2\big)+\gamma_2\int_{\mathbb{R}^3} \nabla^{k-1}u^h\cdot\nabla^k \phi^hdx\right\}\\
				&\quad+\frac{1}{2}\|\nabla^k(u-v)\|_{L^2}^2+\frac{1}{c}\|\nabla\nabla^kv\|_{L^2}^2+\frac{1}{2}\gamma_2\|\nabla^k \phi^h\|_{L^2}^2\\
				&\leq C(\delta+\gamma_2+\gamma_2\delta)(\|\nabla^k \phi\|_{L^2}^2+\|\nabla^k u\|_{L^2}^2+\|\nabla^k v\|_{L^2}^2).
			\end{aligned}
		\end{equation}
		By Lemma \ref{lema6} and the smanllness of $\gamma_2$, we are able to prove that 	
		\begin{equation*}
			\begin{aligned}
				&\frac{1}{2}(\|\nabla^k \phi\|_{L^2}^2+\|\nabla^k u\|_{L^2}^2+\frac{1}{c}\|\nabla^k v\|_{L^2}^2)+\gamma_2\int_{\mathbb{R}^3} \nabla^{k-1}u^h\cdot\nabla^k \phi^hdx\\
				&\sim (\|\nabla^k \phi\|_{L^2}^2+\|\nabla^k u\|_{L^2}^2+\|\nabla^k v\|_{L^2}^2).
			\end{aligned}
		\end{equation*}
		It then follows from the above estimates to prove that there exists a constant $C_2$ such that 
		\begin{equation}\label{lem-hk5}
			\begin{aligned}
				&\frac{d}{dt}(\|\nabla^k \phi\|_{L^2}^2+\|\nabla^k u\|_{L^2}^2+\|\nabla^k v\|_{L^2}^2)\\
				&+C_2(\|\nabla^{k} \phi\|_{L^2}^2+\|\nabla^{k} u\|_{L^2}^2+\|\nabla^{k} v\|_{L^2}^2)\\
				&\leq C(\|\nabla^{k}\phi^\ell\|_{L^2}^2+\|\nabla^{k}u^\ell\|_{L^2}^2+\|\nabla^{k}v^\ell\|_{L^2}^2).
			\end{aligned}
		\end{equation}
		Summing up \eqref{lem-hk5} with respect to $k$ from $j$ to $s$,  then using the definition of $\mathcal{E}_j^s(t)$ yields
		\begin{equation}\label{lem-hk-e}
			\frac{d}{dt}\mathcal{E}_{j}^s(t)+C_2\mathcal{E}_{j}^s(t)\leq C\|\nabla^{j}(\phi^{\ell},u^{\ell},v^{\ell})\|_{L^2}^2.
		\end{equation}
		
		We are now in a position to establish the $L^2$ time decay estimates of $\nabla^j(\phi^\ell,u^\ell,v^\ell)$ with $1\leq j \leq  s$. First, similar to the proof of Lemma \ref{lemA1}, we also have
		\begin{equation}\label{lem-hk6}
			\begin{aligned}
			&	\|\nabla(\phi^{\ell},u^{\ell},v^{\ell})(t)\|_{L^2}\\
			&\leq C(1+t)^{-\frac{5}{4}}\|U_0\|_{L^1}
				+C\int_0^{\frac{t}{2}}(1+t-\tau)^{-\frac{5}{4}}
				\Big(\|(u\cdot\nabla\phi)(\tau)\|_{L^1}+\|(u\cdot\nabla u)(\tau)\|_{L^1}\\
				&\quad+\|(v\cdot\nabla v)(\tau)\|_{L^1}+\|((e^\phi-1)(u-v))(\tau)\|_{L^1}\Big)d\tau\\
				&\quad+C\int_{\frac{t}{2}}^t(1+t-\tau)^{-\frac{3}{4}}
				\Big(\|\nabla(u\cdot\nabla\phi)(\tau)\|_{L^1}+\|\nabla(u\cdot\nabla u)(\tau)\|_{L^1}\\
				&\quad+\|\nabla(v\cdot\nabla v)(\tau)\|_{L^1}+\|\nabla((e^\phi-1)(u-v))(\tau)\|_{L^1}\Big)d\tau.
			\end{aligned}
		\end{equation}
		It follows from H\"{o}lder's inequality and \eqref{lemA1-1} that
		\begin{equation}\label{lem-hk7}
			\begin{aligned}
			&\|(u\cdot\nabla\phi)(\tau)\|_{L^1}+\|(u\cdot\nabla u)(\tau)\|_{L^1}+\|(v\cdot\nabla v)(\tau)\|_{L^1}+\|((e^\phi-1)(u-v))(\tau)\|_{L^1}\\
			&\leq \|u\|_{L^2}\|\nabla \phi\|_{L^2}+\|u\|_{L^2}\|\nabla u\|_{L^2}+\|v\|_{L^2}\|\nabla v\|_{L^2}+C\|\phi\|_{L^2}\| u-v\|_{L^2}\\
				&\leq C(1+\tau)^{-\frac{3}{2}}.
			\end{aligned}
		\end{equation}
		Similarly, we also have
		\begin{equation}\label{lem-hk8}
	\|\nabla(u\cdot\nabla\phi)(\tau)\|_{L^1}+\|\nabla(u\cdot\nabla u)(\tau)\|_{L^1}+\|\nabla(v\cdot\nabla v)(\tau)\|_{L^1}+\|\nabla((e^\phi-1)(u-v))(\tau)\|_{L^1}\leq C(1+\tau)^{-\frac{3}{2}}.
		\end{equation}
		Substituting \eqref{lem-hk7}, \eqref{lem-hk8} into \eqref{lem-hk6} yields
			\begin{align*}
				\|\nabla(\phi^{\ell},u^{\ell},v^{\ell})(t)\|_{L^2}&\leq C(1+t)^{-\frac{5}{4}}+C\int_0^{\frac{t}{2}}(1+t-\tau)^{-\frac{5}{4}}(1+\tau)^{-\frac{3}{2}}d\tau\\
				&\quad+C\int_{\frac{t}{2}}^t(1+t-\tau)^{-\frac{3}{4}}(1+\tau)^{-\frac{3}{2}}d\tau\\
				&\leq C(1+t)^{-\frac{5}{4}}.
			\end{align*}
	Then we deduce from \eqref{lem-hk-e} that 
		\begin{equation*}
			\frac{d}{dt}\mathcal{E}_{1}^s(t)+C_2\mathcal{E}_{1}^s(t)\leq C(1+t)^{-\frac{5}{2}}.
		\end{equation*}
		It is easy to check that 
		\begin{equation*}
			\mathcal{E}_1^s(t)\leq C(1+t)^{-\frac{5}{2}},
		\end{equation*}
		which gives the following results for $s\geq 3$
		\begin{equation}
			\|\nabla (\phi,u,v)(t)\|_{H^{s-1}}\leq C(1+t)^{-\frac{5}{4}}.
		\end{equation}
		We proceed to show the time decay rates of the second-order derivatives of the solution.
		\begin{equation}\label{lem-hk9}
			\begin{aligned}
				&	\|\nabla^2(\phi^{\ell},u^{\ell},v^{\ell})(t)\|_{L^2}\\
			&\leq C(1+t)^{-\frac{7}{4}}\|U_0\|_{L^1}
			+C\int_0^{\frac{t}{2}}(1+t-\tau)^{-\frac{7}{4}}
			\Big(\|(u\cdot\nabla\phi)(\tau)\|_{L^1}+\|(u\cdot\nabla u)(\tau)\|_{L^1}\\
			&\quad+\|(v\cdot\nabla v)(\tau)\|_{L^1}+\|((e^\phi-1)(u-v))(\tau)\|_{L^1}\Big)d\tau\\
			&\quad+C\int_{\frac{t}{2}}^t(1+t-\tau)^{-\frac{5}{4}}
			\Big(\|\nabla(u\cdot\nabla\phi)(\tau)\|_{L^1}+\|\nabla(u\cdot\nabla u)(\tau)\|_{L^1}\\
			&\quad+\|\nabla(v\cdot\nabla v)(\tau)\|_{L^1}+\|\nabla((e^\phi-1)(u-v))(\tau)\|_{L^1}\Big)d\tau.
			\end{aligned}
		\end{equation}
		Note that at this moment, we have
		\begin{equation}\label{010301}
			\|(\phi,u,v)(t)\|_{L^2}\leq C(1+t)^{-\frac{3}{4}},\quad \|\nabla(\phi,u,v)(t)\|_{H^{s-1}}\leq C(1+t)^{-\frac{5}{4}}.
		\end{equation}
	Taking $\eqref{3-1}_2$-$\eqref{3-1}_3$, we derive a new equation of $(u-v) $
		\begin{equation}\label{THZ7}
			\partial_t(u-v)+(1+c)(u-v)=-u\cdot\nabla u-\nabla \phi+v\cdot\nabla v+\nabla P-\Delta v-c(e^\phi-1) (u-v).
		\end{equation}
		Taking inner product by \eqref{THZ7} with $(u-v)$ yields
		\begin{equation}\label{THZ8}
			\begin{aligned}
				&\frac{1}{2}\frac{d}{dt}\|u-v\|_{L^2}^2+(1+c)\|u-v\|_{L^2}^2\\
				&=\int_{\mathbb{R}^3}(-u\cdot\nabla u-\nabla \phi+v\cdot\nabla v+\nabla P-\Delta v-c(e^\phi-1) (u-v))\cdot (u-v)dx.
			\end{aligned}
		\end{equation} 
		The right-hand side of \eqref{THZ8} is bounded by 
		$$
		\begin{aligned}
			&\Big|\int_{\mathbb{R}^3}(-u\cdot\nabla u-\nabla \phi+v\cdot\nabla v+\nabla P-\Delta v-c(e^\phi-1) (u-v))\cdot (u-v)dx\Big| \\
			&\leq  (\|u\|_{L^\infty}\|\nabla u\|_{L^2}+\|\nabla \phi\|_{L^2}+\|v\|_{L^\infty}\|\nabla v\|_{L^2}
			+\|\nabla P\|_{L^2}+\|\Delta v\|_{L^2}+c\|e^\phi -1\|_{L^\infty}\|u-v\|_{L^2})\|u-v\|_{L^2}\\
			&\leq C (\|\nabla u\|_{H^1}\|\nabla u\|_{L^2}+\|\nabla\phi\|_{L^2}+\|\nabla v\|_{H^1}\|\nabla v\|_{L^2}+\|\nabla P\|_{L^2}+\|\Delta v\|_{L^2}+\|\nabla \phi\|_{H^1}\|u-v\|_{L^2})\|u-v\|_{L^2}\\
			&\leq C(1+t)^{-\frac{5}{2}}+C\delta \|u-v\|_{L^2}^2.
		\end{aligned}
		$$
		Thus we have 
		\begin{equation}\label{THZ10}
			\frac{d}{dt}\|u-v\|_{L^2}^2+(1+c)\|u-v\|_{L^2}^2\leq C(1+t)^{-\frac{5}{2}}.
		\end{equation}
		Then applying Gr$\ddot{\text{o}}$nwall's inequality yields 
		\begin{equation}\label{THZ11}
			\|(u-v)(t)\|_{L^2}\leq C(1+t)^{-\frac{5}{4}}.
		\end{equation}
		It may be concluded from \eqref{010301} and \eqref{THZ11} that
		\begin{equation}\label{lem-hk10}
			\begin{aligned}
				&\|(u\cdot\nabla\phi)(\tau)\|_{L^1}+\|(u\cdot\nabla u)(\tau)\|_{L^1}+\|(v\cdot\nabla v)(\tau)\|_{L^1}+\|((e^\phi-1)(u-v))(\tau)\|_{L^1}\\
				&\leq \|u\|_{L^2}\|\nabla \phi\|_{L^2}+\|u\|_{L^2}\|\nabla u\|_{L^2}+\|v\|_{L^2}\|\nabla v\|_{L^2}+C\|\phi\|_{L^2}\| u-v\|_{L^2}\\
				&\leq C(1+\tau)^{-2},
			\end{aligned}
		\end{equation}
		and
		\begin{equation}\label{lem-hk11}
			\|\nabla(u\cdot\nabla\phi)(\tau)\|_{L^1}+\|\nabla(u\cdot\nabla u)(\tau)\|_{L^1}+\|\nabla(v\cdot\nabla v)(\tau)\|_{L^1}+\|\nabla((e^\phi-1)(u-v))(\tau)\|_{L^1}\leq C(1+\tau)^{-2}.
		\end{equation}
		Substituting \eqref{lem-hk10} and \eqref{lem-hk11} into \eqref{lem-hk9} yields
		\begin{equation*}
			\begin{aligned}
				\|\nabla^2(\phi^{\ell},u^{\ell},v^{\ell})(t)\|_{L^2}&\leq C(1+t)^{-\frac{7}{4}}+C\int_0^{\frac{t}{2}}(1+t-\tau)^{-\frac{7}{4}}(1+\tau)^{-2}d\tau\\
				&\quad+C\int_{\frac{t}{2}}^t(1+t-\tau)^{-\frac{5}{4}}(1+\tau)^{-2}d\tau\\
				&\leq C(1+t)^{-\frac{7}{4}}.
			\end{aligned}
		\end{equation*}
		With the help of \eqref{lem-hk-e} and choosing $j=2$, we obtain
		\begin{equation*}
			\frac{d}{dt}\mathcal{E}_{2}^s(t)+C_4\mathcal{E}_{2}^s(t)\leq C(1+t)^{-\frac{7}{2}}.
		\end{equation*}
		Applying Gr$\ddot{\text{o}}$nwall's inequality and using the definition of $\mathcal{E}_{2}^s(t)$ yield  for $s\geq 3$ that 
		\begin{equation*}
			\|\nabla^2(\phi,u,v)(t)\|_{H^{s-2}}\leq C(1+t)^{-\frac{7}{4}}.
		\end{equation*}
		We can proceed analogously to the proof of high-order derivatives of the solution. Therefore we obtain \eqref{lem-hk0} and complete the proof of this lemma.
	\end{proof}
	\section{The proof of Theorems \ref{thm1}-\ref{thm3}}\label{Sec5}
	\hspace{2em} Since we have established the uniform estimates in Section \ref{Sec3} and time decay rates in Section \ref{Sec4}, the next goal is to complete the proof of Theorems \ref{thm1}-\ref{thm3}.
	
	\vskip4mm
	
	\noindent{\it\textbf{Proof of Theorem \ref{thm1}.}\ }
	According to Proposition \ref{pro-est}, we are able to prove
	\begin{equation}
		\|(\phi,u,v)(t)\|_{H^s}^2+C\int_0^t\left(\|\nabla (\phi,u)(\tau)\|_{H^{s-1}}^2+\|\nabla v(\tau)\|_{H^{s}}^2\right)d\tau\leq C\varepsilon_0^2.
	\end{equation}
	Due to the smallness of the initial data $\varepsilon_0$, we can choose $\varepsilon_0$ sufficiently small such that $C\varepsilon_0^2 \leq \frac{1}{4}\delta^2$, which closes the \emph{a priori} assumption \eqref{a priori est}. Then, based on the continuous argument, the global existence of solution $(\phi,u,v)$ and the estimate \eqref{main-est} are obtained.
	
	Concerning the large-time behavior of the solution $(\phi,u,v)$, we conclude from Lemmas \ref{lemA1} and \ref{lem-hk} that 
	\begin{equation}\label{thm2-p1}
		\|\nabla^j(\phi,u,v)(t)\|_{L^2}\leq C(1+t)^{-\frac{3}{4}-\frac{j}{2}},\quad 0\leq j\leq s,
	\end{equation}
	which, together with \eqref{new-var}, yields \eqref{Upper}. This completes the proof of Theorem \ref{thm1}.\endproof
	\vskip4mm
	\noindent{\it\textbf{Proof of Theorem \ref{thm3}.}\ }
Firstly, we have 
	\begin{align*}
		&\int_0^t \|G(t-\tau)*F(\tau)\|_{L^2}d\tau \\
		&\leq \int_0^t\Big(\|(G_{11}+G_{21})(t-\tau)*f_1(\tau)\|_{L^2} +\|(G_{12}+G_{22}+G_{32})(t-\tau)*f_2(\tau)\|_{L^2}\Big)d\tau\\
		&\quad+\int_0^t \|(G_{23}+G _{33})(t-\tau)*f_3(\tau)\|_{L^2}d\tau\\
		&\leq C\int_0^t(1+t-\tau)^{-\frac{3}{4}}(\|(u\cdot \nabla\phi)(\tau)\|_{L^1}+\|(u\cdot\nabla u)(\tau)\|_{L^1}+\|(v\cdot\nabla v)(\tau)\|_{L^1}+\|((e^\phi-1)(u-v))(\tau)\|_{L^1})d\tau\\
		&\quad+C\int_0^te^{-R(t-\tau)}(\|(u\cdot \nabla\phi)(\tau)\|_{L^2}+\|(u\cdot\nabla u)(\tau)\|_{L^2}+\|(v\cdot\nabla v)(\tau)\|_{L^2}+\|((e^\phi-1)(u-v))(\tau)\|_{L^2})d\tau.
	\end{align*}
	According to \eqref{lem-hs5}, we can obtain that
\begin{equation}\label{N001}
	\begin{aligned}
&\|(u\cdot \nabla\phi)(\tau)\|_{L^1}+\|(u\cdot\nabla u)(\tau)\|_{L^1}+\|(v\cdot\nabla v)(\tau)\|_{L^1}+\|((e^\phi-1)(u-v))(\tau)\|_{L^1}\\
		&\leq \|u\|_{L^2}\|\nabla\phi\|_{L^2}+\|u\|_{L^2}\|\nabla u\|_{L^2}
		+\|v\|_{L^2}\|\nabla v\|_{L^2}+C\|\phi\|_{L^2}\|u-v\|_{L^2}\\
		&\leq C(1+\tau)^{-\frac{3}{4}}(\varepsilon_0+ \mathcal{I}_0)(\|\nabla(\phi,u,v)\|_{L^2}+\|u-v\|_{L^2}),
	\end{aligned}
\end{equation}
and 
\begin{equation}\label{N002}
	\begin{aligned}
	&\|(u\cdot \nabla\phi)(\tau)\|_{L^2}+\|(u\cdot\nabla u)(\tau)\|_{L^2}+\|(v\cdot\nabla v)(\tau)\|_{L^2}+\|((e^\phi-1)(u-v))(\tau)\|_{L^2}\\
		&\leq \|u\|_{L^\infty}\|\nabla\phi\|_{L^2}+\|u\|_{L^\infty}\|\nabla u\|_{L^2}
		+\|v\|_{L^\infty}\|\nabla v\|_{L^2}+\|e^\phi-1\|_{L^\infty}\|u-v\|_{L^2}\\
		&\leq C\|\nabla u\|_{H^1}\|\nabla\phi\|_{L^2}+C\|\nabla u\|_{H^1}\|\nabla u\|_{L^2}
		+C\|\nabla v\|_{H^1}\|\nabla v\|_{L^2}+C\|\nabla (e^\phi-1)\|_{H^1}\|u-v\|_{L^2}\\
		&\leq C\|\nabla u\|_{H^1}\|\nabla\phi\|_{L^2}
		+C\|\nabla u\|_{H^1}\|\nabla u\|_{L^2}+C\|\nabla v\|_{H^1}\|\nabla v\|_{L^2}+C\|\nabla\phi\|_{H^1}\|u-v\|_{L^2}\\
		&\leq C(1+\tau)^{-\frac{5}{4}}(\varepsilon_0+ \mathcal{I}_0)(\|\nabla(\phi,u,v)\|_{L^2}+\|u-v\|_{L^2}).
	\end{aligned}
\end{equation}
	Consequently, it then follows from \eqref{N001}, \eqref{N002} and \eqref{D1} to prove 
	\begin{equation}\label{thm3-p2}
		\begin{aligned}
			&\int_0^t \|G(t-\tau)*F(\tau)\|_{L^2}d\tau\\
			&\leq C(\varepsilon_0+ \mathcal{I}_0)\int_0^t(1+t-\tau)^{-\frac{3}{4}}(1+\tau)^{-\frac{3}{4}}(\|\nabla (\phi,u,v)(\tau)\|_{L^2}+\|(u-v)(\tau)\|_{L^2})d\tau\\
			&\quad +C(\varepsilon_0+ \mathcal{I}_0)\int_0^te^{-R(t-\tau)}(1+\tau)^{-\frac{5}{4}}(\|\nabla (\phi,u,v)(\tau)\|_{L^2}+\|(u-v)(\tau)\|_{L^2})d\tau\\
			&\leq C(\varepsilon_0+ \mathcal{I}_0)\left(\int_0^t\big((1+t-\tau)^{-\frac{3}{2}}(1+\tau)^{-\frac{3}{2}}+e^{-2R(t-\tau)}(1+\tau)^{-\frac{5}{2}}\big)d\tau\right)^{\frac{1}{2}}\\
			&\quad\cdot \Big(\int_0^t(\|\nabla (\phi,u,v)(\tau)\|_{L^2}^2+\|(u-v)(\tau)\|_{L^2}^2)d\tau\Big)^{\frac{1}{2}}\\
			&\leq C(1+t)^{-\frac{3}{4}}(\varepsilon_0^2+\varepsilon_0 \mathcal{I}_0).
		\end{aligned}
	\end{equation}
	By Parseval's equality, Proposition \ref{pro-li-d}, and the smallness of $\varepsilon_0$, one has
	\begin{equation}\label{thm3-p1}
		\begin{aligned}
			\|U(t)\|_{L^2}
			&\geq \|G*U_0\|_{L^2} -\int_0^t \|G(t-\tau)*F(\tau)\|_{L^2}d\tau\\
	        &\geq c_*(1+t)^{-\frac{3}{4}}- C(1+t)^{-\frac{3}{4}}
		(\varepsilon_0^2+\varepsilon_0 \mathcal{I}_0)\\
		&\geq \frac{1}{2}c_*(1+t)^{-\frac{3}{4}}.
		\end{aligned}
	\end{equation}
	If $t$ is large enough, we can show 
	\begin{equation}\label{thm3-p4}
		\begin{aligned}
			\|\Lambda^{-1}U(t)\|_{L^2}&\leq \|\Lambda^{-1}U^\ell(t)\|_{L^2}
			+\|\Lambda^{-1}U^h(t)\|_{L^2}\\
			&\leq C(1+t)^{-\frac{1}{4}}+C\int_0^t(1+t-\tau)^{-\frac{1}{4}}(\|\nabla (\phi,u,v)(\tau)\|_{L^2}+\|(u-v)(\tau)\|_{L^2})d\tau+C\|U^h(t)\|_{L^2}\\
				&\leq C(1+t)^{-\frac{1}{4}}+C\int_0^t(1+t-\tau)^{-\frac{1}{4}}(1+\tau)^{-\frac{5}{4}}d\tau+C\|U(t)\|_{L^2}\\
			&\leq C(1+t)^{-\frac{1}{4}}.
		\end{aligned}
	\end{equation}
	It then follows from Lemma \ref{lema5} that 
	\begin{equation}\label{51}
		\|U\|_{L^2}\leq C\|\Lambda^{-1}U\|_{L^2}^{\frac{j}{j+1}}
		\|\nabla^j U\|_{L^2}^{\frac{1}{j+1}},
	\end{equation}
	which, together with \eqref{thm3-p1} and \eqref{thm3-p4}, yields
	\begin{equation}\label{thm3-p5}
		\|\nabla^j(\phi,u,v)(t)\|_{L^2}=\|\nabla^jU(t)\|_{L^2}\geq d_*(1+t)^{-\frac{3}{4}-\frac{j}{2}},
	\end{equation}
	where $d_*$ is a positive constant.
	Substituting \eqref{new-var} into \eqref{thm3-p5} implies
	\begin{equation}
		\|\nabla^j(a-a_*,u,v)(t)\|_{L^2}\geq d_*(1+t)^{-\frac{3}{4}-\frac{j}{2}},\quad 0\leq j\leq s.
	\end{equation}
This completes the proof of Theorem \ref{thm3}.
	\endproof
	
	\section*{Acknowledgements}
	\hspace{2em}
	The work of the first author was partially supported by National Key R\&D Program of China 2021YFA1000800, and National Natural Sciences Foundation of China 11688101. The work of the second author is partially supported by the project funded by China Postdoctoral Science Foundation 2023M733691. The work of the third is partially supported by National Natural Science Foundation of China 12271114, Natural Science Foundation of Fujian Province 2022J01304 and Program for Innovative Research Team in Science and Technology in Fujian Province University Quanzhou High-Level Talents Support Plan 2017ZT012.	The work of the fourth author is partially supported by  National Natural Science Foundation of China 12001033.

\end{document}